\theoremstyle{plain}
\newtheorem{theorem}{Theorem}[section]
\newtheorem{prop}[theorem]{Proposition}
\newtheorem{criterion}[theorem]{Criterion}
\newtheorem{claim}[theorem]{Claim}
\newtheorem{cor}[theorem]{Corollary}
\newtheorem{conj}[theorem]{Conjecture}
\theoremstyle{definition}
\newtheorem{rmk}[theorem]{Remark}
\newtheorem*{ex*}{Example}
\newtheorem{problem}[theorem]{Problem}
\newcommand\sO{{\mathcal O}}
\newcommand\sK{{\mathcal K}}
\newcommand\sC{{\mathcal C}}
\newcommand\sF{{\mathcal F}}
\newcommand\sI{{\mathcal I}}
\newcommand\sJ{{\mathcal J}}
\newcommand\sQ{{\mathcal Q}}
\newcommand\sA{{\mathcal A}}
\newcommand{\codim}{{\rm codim}\,}
\newcommand\rp{{\mathbf{P}}}
\newcommand\rz{{\mathbf{Z}}}
\newcommand\rr{{\mathbf{R}}}
\newcommand\rc{{\mathbf{C}}}
\newcommand\rl{{\mathbf{L}}}
\title[Deformations of minimal cohomology classes]
{Deformations of minimal cohomology classes on abelian varieties}
\author{Luigi Lombardi and Sofia Tirabassi}
\address{Mathematical Institute\\ University of Bonn, Endenicher Allee 60, 53115, Germany}
 \email{\url{lombardi@math.uni-bonn.de}}
\address{Department of Mathematics\\ The University of Utah, 155 S 1400 E, JWB 233, Salt Lake City, UT 84112-0090, USA}
\email{\url{sofia@math.utah.edu}}
\begin{document}

\begin{abstract}
We show that the infinitesimal deformations of the Brill--Noether locus $W_d$ attached to a smooth non-hyperelliptic 
curve $C$ are in one-to-one correspondence with the deformations 
of $C$. As an application, we prove that if a Jacobian $J$ deforms together with a minimal cohomology class
out the Jacobian locus, then $J$ is hyperelliptic. In particular, this provides an evidence to a conjecture of Debarre on the classification
of ppavs carrying a minimal cohomology class. Finally, we also study simultaneous deformations of Fano surfaces of lines and intermediate Jacobians.
 \end{abstract}

\maketitle

\section{Introduction}

Given a smooth complex curve $C$ of genus $g\geq 2$ with Jacobian $J$, 
we denote by $C_d$ ($d\geq 1$) the $d$-fold symmetric product of $C$ and by 
$$f_d:C_d\longrightarrow J,\quad \quad P_1+\ldots +P_d\mapsto \sO_C(P_1+\ldots +P_d-dQ)$$
the Abel--Jacobi map defined up to the choice of a point $Q\in C$.
In the papers \cite{Ke2} and \cite{Fa} the infinitesimal deformations of $C_d$ and $f_d$ are studied: these
are in one-to-one correspondence with the deformations of $C$ if and only if $g\geq 3$.
In particular, there are isomorphisms of functors of Artin rings 
$${\rm Def}_{C_d}\simeq {\rm Def}_{f_d}\simeq {\rm Def}_C\quad\quad  \mbox{ for\, all }\quad \quad d\geq 2 \quad \quad \mbox{ if \,and\, only\, if }\quad 
\quad g\geq 3.$$
On the other hand, the computation of infinitesimal deformations of the images 
$$W_d=\{[L]\in {\rm Pic}^d(C) \, | \, h^0(C,L)>0\}$$ 
of Abel--Jacobi maps, namely the Brill--Noether loci parameterizing degree $d$ line bundles on $C$ having at least one 
non-zero global section, is a problem that has not been studied yet in its full generality and has
interesting relationships to a conjecture of Debarre (see Conjecture \ref{conjintr}).
Previous calculations of deformations of Brill--Noether loci have been performed only for Theta divisors $\Theta\simeq W_{g-1}$ of 
non-hyperelliptic Jacobians where the authors of \cite{SV} prove that the
first-order deformations of $C$ inject in those of $W_{g-1}$.

One of the main difficulties for the computation of deformations of Brill--Noether loci
is that in general these spaces are singular. However, as shown by work 
of Kempf \cite{Ke1}, the singularities of $W_d$ are at most rational and a resolution of its singularities is provided by the 
Abel--Jacobi map $u_d:C_d\rightarrow W_d$ which factorizes $f_d$. 
By extending a construction of Wahl in \cite[\S1]{Wa} for affine equisingular deformations,
this allows us to define a ``blowing-down deformation'' morphism of functors of Artin rings
$$u'_d:{\rm Def}_{C_d}\longrightarrow {\rm Def}_{W_d}$$ 
sending an infinitesimal deformation $\mathcal{C}_d$ of $C_d$ over an Artinian local $\rc$-algebra $A$, 
to the infinitesimal deformation 
$\mathcal{W}_d:=\big(W_d,u_{d*}\sO_{\mathcal{C}_d}\big)$ of $W_d$ over $A$ (Proposition \ref{bdmor}). 
In the following theorem we prove that the blowing-down morphism is an isomorphism of functors whenever $C$ is non-hyperelliptic. 
We refer to \S\ref{defWd} for its proof.
\begin{theorem}\label{intrdefW_d}
If $C$ is a smooth non-hyperelliptic curve of genus $g\geq 3$, then for all $1\leq d < g-1$ the blowing-down morphism 
$u'_d:{\rm Def}_{C_d}\rightarrow {\rm Def}_{W_d}$ is an isomorphism. In particular, ${\rm Def}_{W_d}\simeq {\rm Def}_C$,
$W_d$ is unobstructed, and ${\rm Def}_{W_d}$ is prorepresented by a formal power series in $3g-3$ variables.
\end{theorem}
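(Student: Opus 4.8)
The plan is to reduce everything to the single claim that the blowing-down morphism $u'_d$ is an isomorphism of functors; the remaining assertions then follow formally. Indeed, it is classical that $\mathrm{Def}_C$ is smooth, prorepresentable, and of dimension $\dim_{\rc} H^1(C,T_C)=3g-3$, and the isomorphism $\mathrm{Def}_{C_d}\simeq\mathrm{Def}_C$ recalled above (valid for $g\geq 3$) transports these properties to $\mathrm{Def}_{C_d}$; an isomorphism $u'_d$ then carries them to $\mathrm{Def}_{W_d}$. The case $d=1$ is trivial, since $u_1:C\to W_1$ is an isomorphism, so I assume $d\geq 2$. Because $\mathrm{Def}_{C_d}$ is smooth, the standard smoothness criterion for a morphism of functors of Artin rings applies: $u'_d$ is smooth as soon as its differential is surjective and its obstruction map is injective, and a smooth morphism that is moreover bijective on tangent spaces is an isomorphism. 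Hence it suffices to show that $du'_d$ is bijective on tangent spaces and that $u'_d$ is injective on obstruction spaces.

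The core of the argument is the comparison of the tangent--obstruction theories of the smooth space $C_d$ and of the singular variety $W_d$. For $C_d$ one has $T^i_{C_d}=H^i(C_d,T_{C_d})$, while the deformations of $W_d$ are governed by the cotangent cohomology $T^i_{W_d}=\mathrm{Ext}^i_{W_d}(\mathbb{L}_{W_d},\sO_{W_d})$, and one checks that $u'_d$ induces the maps coming from the comparison of cotangent complexes along $u_d$. The essential input is Kempf's rationality of the singularities of $W_d$ \cite{Ke1}, giving $Ru_{d*}\sO_{C_d}=\sO_{W_d}$. Together with the adjunction $(Lu_d^{*},Ru_{d*})$ this yields an isomorphism
$$\mathrm{RHom}_{W_d}(\mathbb{L}_{W_d},\sO_{W_d})\;\simeq\;\mathrm{RHom}_{C_d}(Lu_d^{*}\mathbb{L}_{W_d},\sO_{C_d}),$$
and the distinguished triangle $Lu_d^{*}\mathbb{L}_{W_d}\to\mathbb{L}_{C_d}\to\mathbb{L}_{C_d/W_d}$ reduces the whole comparison to estimating the contribution of the relative term $\mathbb{L}_{C_d/W_d}$, which is supported on the exceptional locus of the resolution $u_d$.

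To control that contribution I would exploit the explicit geometry of $u_d$: its fibres over $W^r_d$ are the linear systems $\mathbb{P}^{r}$, and by Martens' theorem the singular locus $\mathrm{Sing}(W_d)=W^1_d$ has codimension at least $3$ in $W_d$ precisely because $C$ is non-hyperelliptic. This codimension bound, combined with normality and the depth estimates supplied by rational (hence Cohen--Macaulay) singularities, should force the local cotangent sheaves $\mathcal{T}^q_{W_d}$ (with $\mathcal{T}^0_{W_d}=T_{W_d}$), which are supported on $W^1_d$, to contribute nothing to $T^1_{W_d}$ and nothing new to $T^2_{W_d}$. Concretely, the local-to-global spectral sequence $H^p(W_d,\mathcal{T}^q_{W_d})\Rightarrow T^{p+q}_{W_d}$ should collapse enough to give $T^1_{W_d}\simeq H^1(W_d,T_{W_d})$, which one then identifies with $H^1(C_d,T_{C_d})$ by means of $u_d$, using $u_{d*}T_{C_d}=T_{W_d}$ and the vanishing of the relevant higher direct images $R^qu_{d*}T_{C_d}$; the same circle of ideas gives the injectivity of the obstruction map on $T^2$. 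It is here that non-hyperellipticity is indispensable, as it is exactly what guarantees, through the Riemann--Kempf description of the tangent cones and the Martens bound, that the singularities of $W_d$ are mild enough for these comparisons to be isomorphisms rather than mere injections.

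I expect the main obstacle to be precisely this last point: proving that the singularities of $W_d$ carry no deformations of their own, i.e. that neither the local terms $\mathcal{T}^q_{W_d}$ nor the higher direct images $R^qu_{d*}T_{C_d}$ enlarge $T^1_{W_d}$ beyond $H^1(C,T_C)$ or introduce obstructions not already present on $C_d$. Granting the tangent isomorphism and the obstruction injectivity, the smoothness criterion makes $u'_d$ an isomorphism, whence $\mathrm{Def}_{W_d}\simeq\mathrm{Def}_{C_d}\simeq\mathrm{Def}_C$ is smooth and prorepresented by a formal power series ring in $3g-3$ variables, and in particular $W_d$ is unobstructed.
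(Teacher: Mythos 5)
Your global strategy --- reduce everything to the bijectivity of the differential of $u'_d$ and then conclude by a formal criterion --- is the same as the paper's, but with one structural difference worth flagging: the paper invokes Sernesi's criterion (a morphism of prorepresentable functors with smooth source and bijective differential is an isomorphism), checking prorepresentability of ${\rm Def}_{W_d}$ via $H^0(W_d,T_{W_d})\simeq H^0(C_d,T_{C_d})=0$, and thereby never needs an obstruction theory for ${\rm Def}_{W_d}$ at all. Your route instead requires injectivity on obstruction spaces, which is problematic here: $W_d$ is not a local complete intersection, so it is not even clear that $T^2_{W_d}$ is an obstruction space (the paper explicitly refrains from discussing obstructions for such functors). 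You should replace that step by the prorepresentability argument.

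The genuine gap is in the core step. You propose to kill the contribution of $\mathbb{L}_{C_d/W_d}$ (equivalently, of the cone $P$ of $\mathbf{L}u_d^*\Omega_{W_d}\to\Omega_{C_d}$) by a codimension/depth argument, citing that ${\rm Sing}(W_d)=W^1_d$ has codimension $\geq 3$ in $W_d$ by Martens. But the ${\rm Ext}$ groups controlling ${\rm ker}\,{\rm d}u'_d$ and ${\rm coker}\,{\rm d}u'_d$ live on $C_d$, and the relevant locus there is the exceptional locus $C^1_d=u_d^{-1}(W^1_d)$, whose codimension in $C_d$ is only $2$ (the $\mathbb{P}^1$-fibers over $W^1_d\setminus W^2_d$, which has dimension up to $d-3$, sweep out a divisor-by-one locus of dimension $d-2$). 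For a complex with cohomology sheaves in degrees $\leq 0$ supported in codimension $2$ on a smooth variety, ${\rm Ext}^1(-,\sO)$ and ${\rm Ext}^2(-,\sO)$ do \emph{not} vanish for codimension reasons --- this is precisely why the paper isolates the codimension-$2$ case as the difficult one and why its general Criterion requires exceptional locus of codimension $\geq 3$ in the \emph{source}. Likewise, the local sheaves $\mathcal{T}^q_{W_d}$ being supported on a small closed set does not make their $H^0$ vanish, so the local-to-global spectral sequence does not ``collapse'' as you hope. The missing ingredients, which your sketch does not supply and which cannot be replaced by pure codimension counting, are: (i) Ein's exact sequence for the normal bundle of the fibers $P_L\simeq\mathbb{P}^1$ over $W^1_d\setminus W^2_d$, which identifies $\omega_{C_d}|_{P_L}\simeq\sO_{\mathbb{P}^1}(g-d-1)$ and, via the theorem on formal functions, shows that $R^1u_{d*}(u_d^*\Omega_{W_d}\otimes\omega_{C_d})$ and $R^1u_{d*}(\Omega_{C_d/W_d}\otimes\omega_{C_d})$ are supported on the much smaller locus $W^2_d$; (ii) Grothendieck--Verdier duality, which converts ${\rm Ext}^i_{C_d}(P,\sO_{C_d})$ into ${\rm Ext}^{i+g-d}_{J}(\mathbf{R}f_{d*}(P\otimes\omega_{C_d}),\sO_J)$ --- note the essential twist by $\omega_{C_d}$, absent from your argument; and (iii) only then Martens' bound $\dim W^j_d\leq d-2j-1$, applied on the Jacobian, to conclude the vanishing. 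Without (i) and (ii) the argument does not close.
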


We believe that the hypothesis of non-hyperellipticity is unnecessary at least in cases $d\neq 2$.
However, for $d=2$, we notice that the space $W_2$, besides the deformations coming from the curve, may also acquire additional 
deformations coming from 
deformations of Fano surfaces of lines associated to smooth cubic threefolds. In fact, Collino in \cite{Co} 
shows that the locus of hyperelliptic Jacobians of dimension five lies in the 
boundary of the locus of intermediate Jacobians of smooth cubic threefolds.
  
Now we make some comments regarding the proof of Theorem \ref{intrdefW_d}. 
In case the exceptional locus of $u_d$ has codimension at least three in $C_d$, then Theorem \ref{intrdefW_d} easily follows 
from the general theory of blowing-down morphisms (see \S\ref{secbdm} and in particular Criterion \ref{cri} below) and does not rely on the special 
structure of Abel--Jacobi maps.
In fact,  
the existence of blowing-down morphisms is not specific to the morphism $u_d$ itself, 
rather to any morphism $f:X\rightarrow Y$ between projective integral schemes such that $\rr f_*\sO_X\simeq \sO_Y$
(Proposition \ref{bd}). In \S\ref{secbdm}, we give 
an explicit description to the differential of a blowing-down morphism $f':{\rm Def}_{X}\rightarrow {\rm Def}_Y$ and, moreover, 
we find sufficient conditions on $X$, $Y$ and $f$ so that $f'$ defines an isomorphism of functors. This leads
to the following criterion whose proof can be found in Corollary \ref{corbd}. We refer to \cite{Ran3} for related criteria regarding 
source-target-stability type problems.
 \begin{criterion}\label{cri}
  Let $f:X\rightarrow Y$ be a birational morphism of integral projective schemes over an algebraically closed field of characteristic zero such
  that the exceptional locus of $f$ is of codimension at least 
 three in $X$. Furthermore assume that $\rr f_*\sO_X\simeq \sO_Y$. If $X$ is non-singular, unobstructed and  
 $h^0(X,T_X)=0$, then the blowing-down morphism
 $f':{\rm Def}_X\rightarrow {\rm Def}_Y$ is an isomorphism of functors of Artin rings.
\end{criterion}

Hence the difficult case of Theorem \ref{intrdefW_d} is precisely when 
the exceptional locus of $u_d$ is of codimension two (the case of codimension one is excluded as we are supposing $C$ 
non-hyperelliptic). 
For this case we carry out an ad-hoc argument specific
to Abel--Jacobi maps. The main point is to prove that the differential ${\rm d}u_d'$ of the blowing-down morphism is an isomorphism even in this case. 
To this end, first of all we notice that the kernel and cokernel of ${\rm d}u_d'$ are identified to the groups
${\rm Ext}^1_{\sO_{C_d}}(P,\sO_{C_d})$ and ${\rm Ext}^2_{\sO_{C_d}}(P,\sO_{C_d})$, respectively, 
where $P$ is the cone of the following composition of morphisms of complexes:
\begin{equation}\label{compintr}
\rl u_{d}^*\Omega_{W_d}\longrightarrow u_d^*\Omega_{W_d}\longrightarrow \Omega_{C_d}
\end{equation}
where the first map is the truncation morphism and the second is the natural morphism between sheaves of K\"{a}hler differentials.
On the other hand, an  application of Grothendieck--Verdier's duality shows that  
the vanishings of the above mentioned Ext-groups hold as soon as the support of the higher direct image 
sheaf $R^1 f_{d*}(\Omega_{C_d / W_d}\otimes \omega_{C_d})$ has
sufficiently high codimension in $J$, namely at least five (see Propositions \ref{R1a} and \ref{relativediff}). 
But this is ensured by Ein's computations of the Castelnuovo--Mumford regularity of 
the dual of the normal bundle to the fibers of $f_d$ 
(\cite{Ein}). Finally, the passage from first-order deformations to arbitrary infinitesimal deformations follows as $C_d$ is unobstructed.

As an application, we compute the infinitesimal deformations of the Albanese map
$$\iota_d:W_d\hookrightarrow J,\quad \quad L\mapsto L\otimes \sO_C(-dQ)$$ where both the domain, the codomain, and the closed 
immersion are allowed to deform (Sernesi in \cite[Example 3.4.24 (iii)]{Se} solves the case $d=1$).
The importance of this problem relies 
on a conjecture of Debarre pointing to a classification of $d$-codimensional subvarieties $X$ of ppavs $(A,\Theta)$ representing a minimal cohomology class, \emph{i.e.} 
$[X]= \frac{1}{d!}[\Theta]^d$ in $H^{2d}(A,\rz)$ (\emph{cf}. \cite{De1}, \cite{De2} and \cite{G}).
\begin{conj}\label{conjintr}
 Let $(A,\Theta)$ be an indecomposable ppav of dimension $g$ and let $X\subset A$ be a reduced equidimensional
  subscheme of codimension $d>1$.
  Then $[X]=\frac{1}{d!}[\Theta]^d$ in $H^{2d}(A,\rz)$ if and only if,
 up to isomorphism, either $(A,\Theta)$ is the Jacobian of a smooth curve and $X=\pm W_d$, or $(A,\Theta)$ is an intermediate Jacobian of a smooth 
 cubic threefold $Y$ and $X=\pm F$ where $F$ is the Fano surface parameterizing lines on $Y$.
   \end{conj}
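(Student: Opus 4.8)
The forward implication is classical and independent of the rest, so the plan is to isolate it first and then concentrate on the converse. When $(A,\Theta)$ is the Jacobian of a smooth curve, Poincaré's formula
$$[W_e]=\frac{1}{(g-e)!}\,[\Theta]^{g-e}\qquad\text{in }H^{2(g-e)}(A,\rz)$$
shows that the Brill--Noether locus of the appropriate dimension represents the minimal class in its codimension, the two translation classes exchanged by $a\mapsto -a$ accounting for the sign $\pm$. When $(A,\Theta)$ is the intermediate Jacobian of a smooth cubic threefold $Y$, the analogous equality $[F]=\tfrac{1}{3!}[\Theta]^{3}$ for the Fano surface of lines is the Clemens--Griffiths computation. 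Hence only the ``only if'' direction requires an argument: starting from a reduced, equidimensional $X\subset A$ of codimension $d>1$ with $[X]=\tfrac{1}{d!}[\Theta]^{d}$, one must reconstruct $(A,\Theta)$ and identify $X$.

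I would first reduce to the case that $X$ is irreducible---since the minimal class is primitive in $H^{2d}(A,\rz)$, a reduced equidimensional representative cannot split as a union of two subvarieties without writing $\tfrac{1}{d!}[\Theta]^{d}$ as a sum of two nonzero effective classes---and then attack the converse cohomologically via the Fourier--Mukai transform. The guiding principle is that a minimal class is detected by a strong generic-vanishing property: the twisted ideal sheaf $\sI_X(\Theta)$ should satisfy $\mathrm{IT}_0$ and its transform should be supported on the theta-dual $V(X)\subset\widehat{A}$, which again carries a minimal class in the complementary codimension. This ``theta-duality'' reduces the problem to its two extreme codimensions, which are known: for $d=g-1$ (so $X$ a curve) the Matsusaka--Ran criterion forces $(A,\Theta)$ to be a Jacobian with $X$ its Abel--Jacobi image, and the divisorial case is governed by the irreducibility of $\Theta$. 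The aim is to interpolate between these anchors by descending induction on $d$, using the theta-dual to trade a codimension-$d$ problem for a codimension-$(g-d)$ one and peeling off one dimension at a time.

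Deformation rigidity enters to control the locus of ppavs carrying a given minimal class. By Theorem \ref{intrdefW_d} the Brill--Noether locus on a non-hyperelliptic Jacobian has exactly the deformations of the curve, and the application announced in the abstract shows that such a Jacobian, equipped with its minimal class, cannot deform out of the Jacobian locus unless it is hyperelliptic. Collino's degeneration \cite{Co} of hyperelliptic Jacobians of dimension five to intermediate Jacobians of cubic threefolds then explains how the Fano-surface family emerges exactly where this rigidity fails. Assembling these local pictures into the global statement that no ppav outside the two families carries a minimal class is where I expect the real difficulty to lie: generic vanishing furnishes necessary conditions on $X$ but no canonical recipe reconstructing a curve or a cubic threefold from its Fourier--Mukai transform in intermediate codimension, and the descending induction must be closed without knowing a priori that $(A,\Theta)$ lies in one of the two families. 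Ruling out such exotic minimal classes in the middle range is the genuine obstacle, and the reason the statement remains a conjecture; the results of this paper bound how a minimal class may propagate under deformation but do not by themselves exclude its existence.
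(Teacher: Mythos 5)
The statement you were asked about is Conjecture \ref{conjintr}, which is Debarre's conjecture: the paper contains no proof of it, and none exists in the literature. What the paper proves is only infinitesimal evidence, namely that for a non-hyperelliptic Jacobian the functor ${\rm Def}_{\iota_d}$ is isomorphic to ${\rm Def}_C$ (Theorems \ref{intrdefW_d} and \ref{intrdefiota}, Corollary \ref{intrcor}), so that a Jacobian deforming together with its minimal class stays in the Jacobian locus $\sJ_g$ to all orders. Your proposal is honest on this point --- you conclude that the converse direction cannot be closed --- and your treatment of the ``if'' direction is correct and standard: Poincar\'e's formula $[W_{g-e}]=\frac{1}{e!}[\Theta]^e$ for Jacobians, and the Clemens--Griffiths computation $[F]=\frac{1}{3!}[\Theta]^3$ for the Fano surface \cite{CG}. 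So there is no paper proof to compare against, and your text is best read as a research plan rather than a proof attempt.

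Two concrete defects in the plan itself are worth naming. First, your central mechanism --- that a minimal class forces $\sI_X(\Theta)$ to be $GV$ (or $\mathrm{IT}_0$ after twisting), with Fourier--Mukai transform supported on a theta-dual $V(X)$ again of minimal class --- is precisely Pareschi--Popa's Conjecture A; what is actually proven in \cite{PP} is the \emph{opposite} implication ($GV$ $\Rightarrow$ minimal class), plus the cases $d=1$ and $d=g-2$. Using the theta-duality induction as stated therefore assumes an open conjecture at every step and is circular as a route to Conjecture \ref{conjintr}. Relatedly, your reduction to irreducible $X$ needs that $\frac{1}{d!}[\Theta]^d$ is extremal in the cone of effective classes, which is not known on an arbitrary ppav (on Jacobians it follows from \cite[Theorem 5.1]{De1}, but that is the case already settled). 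Second, the deformation-theoretic input of this paper is strictly local: it shows that $\sJ_g$ is an ``infinitesimally isolated'' piece of the locus $\sC_{g,d}$ along non-hyperelliptic Jacobians (with Collino's degeneration \cite{Co} explaining the hyperelliptic boundary behavior), but it says nothing about components of $\sC_{g,d}$ whose general point is not a Jacobian or an intermediate Jacobian --- exactly the ``exotic minimal classes in the middle range'' you flag. Your final paragraph identifies this correctly; the point of this review is that the two anchors you propose (Matsusaka--Ran for $d=g-1$ \cite{Ran1}, the divisor case, H\"oring's generic statement \cite{Ho}) plus the rigidity results here genuinely do not assemble into a proof, which is why the statement is, and remains in the paper, a conjecture.
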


Debarre establishes the previous conjecture in the case of 
Jacobians of smooth curves by proving that the only effective cycles 
on Jacobians representing the minimal class $\frac{1}{d!}[\Theta]^d$ are the loci $\pm W_d$ up to translation (\cite{De1} Theorem 5.1). 
Moreover, Debarre himself solves the conjecture in a weak sense 
by proving that the Jacobian locus $\sJ_g$ in the moduli space of ppavs $\sA_g$ 
is an irreducible component of the locus $\sC_{g,d}$ of $g$-dimensional ppavs carrying an effective 
cycle representing the minimal class $\frac{1}{d!}[\Theta]^d$, 
and similarly for the locus of intermediate Jacobians (\cite[Theorem 8.1]{De1}). (Other evidence towards Conjecture \ref{conjintr} 
can be found in \cite[Theorem 5]{Ran1} for ppavs of dimension four and in \cite[Theorem 1.2]{Ho} for generic intermediate Jacobians.)
Therefore the study of deformations of type 
 
 \begin{eqnarray*}
\xymatrix@=22pt{
 \mathcal{W}_d \ar[rd] \ar[rr] && \mathcal{J}\ar[ld]\\
 & {\rm Spec}\, A\\
}
\end{eqnarray*}

\noindent over an Artinian local $\rc$-algebra $A$ such that the restriction to the closed point is the closed embedding
$\iota_d:W_d\hookrightarrow J$ will tell us in which directions the Jacobian $J$ is allowed to deform as a ppav containing a subvariety 
representing a minimal class. More precisely, this study will suggest us
along which type of Jacobians there might be an irreducible component of $\sC_{g,d}$ (different from $\sJ_g$) that passes through them.
The main result of this paper in this direction is an evidence to Conjecture \ref{conjintr} mainly saying that non-hyperelliptic 
Jacobians, seen as elements in $\mathcal{C}_{g,d}$, deform along the expected directions.
Less informally, if 
$$p_{W_d}:{\rm Def}_{\iota_d}\longrightarrow {\rm Def}_{W_d}$$ denotes the natural forgetful morphism, we have then:

\begin{theorem}\label{intrdefiota}
 If $C$ is a smooth non-hyperelliptic curve of genus $g\geq 3$, then for any $1\leq d<g-1$ the forgetful morphism
 $p_{W_d}:{\rm Def}_{\iota_d}\rightarrow {\rm Def}_{W_d	}$ is an isomorphism. In particular, 
 $\iota_d$ is unobstructed and ${\rm Def}_{\iota_d}$ is prorepresented by a formal power series in $3g-3$ variables.
\end{theorem}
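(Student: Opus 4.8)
The plan is to prove that the differential $\mathrm{d}p_{W_d}$ is bijective and that $\iota_d$ is unobstructed. Since ${\rm Def}_{W_d}\simeq{\rm Def}_C$ is smooth of dimension $3g-3$ by Theorem~\ref{intrdefW_d}, a morphism of deformation functors which admits a section, is bijective on tangent spaces, and has smooth source and target is an isomorphism, and this yields every assertion at once. I would first dispose of surjectivity by exhibiting a section $s\colon {\rm Def}_{W_d}\to{\rm Def}_{\iota_d}$: via the identification ${\rm Def}_{W_d}\simeq{\rm Def}_C$, a deformation $\mathcal C\to\operatorname{Spec}A$ of $C$ produces the relative Picard scheme $\mathcal J:=\operatorname{Pic}^d(\mathcal C/\operatorname{Spec}A)$, a deformation of $J$, inside which the relative Brill--Noether locus $\mathcal W_d$ gives a deformation of $W_d$, so that the inclusion $\mathcal W_d\hookrightarrow\mathcal J$ is a deformation of $\iota_d$. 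Because the relative Abel--Jacobi map still satisfies $\rr u_{d*}\sO\simeq\sO$, the family $\mathcal W_d$ coincides with the blowing-down of the relative symmetric product, so $p_{W_d}\circ s$ is exactly the isomorphism of Theorem~\ref{intrdefW_d}; in particular $p_{W_d}$ is a split surjection and $\mathrm{d}p_{W_d}$ is surjective.

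The heart of the matter, and the step I expect to be the main obstacle, is the injectivity of $\mathrm{d}p_{W_d}$. Its kernel consists of the first-order deformations of $\iota_d$ inducing the trivial deformation of the source $W_d$, that is, the deformations of the morphism $\iota_d$ with $W_d$ held fixed while $J$ and the map vary. By the deformation theory of morphisms (see \cite[\S3.4]{Se}) this space fits into an exact sequence
\begin{equation*}
0\to\operatorname{coker}\!\big(H^0(J,T_J)\xrightarrow{\iota_d^\ast}H^0(W_d,\iota_d^\ast T_J)\big)\to\ker\mathrm{d}p_{W_d}\to\ker\!\big(H^1(J,T_J)\xrightarrow{\iota_d^\ast}H^1(W_d,\iota_d^\ast T_J)\big)\to0 .
\end{equation*}
Since $J$ is an abelian variety its tangent bundle is trivial, $T_J\cong V\otimes_{\rc}\sO_J$ with $V=T_0J$, so after tensoring with $V$ both maps are induced by the pullbacks $\iota_d^\ast\colon H^i(J,\sO_J)\to H^i(W_d,\sO_{W_d})$ for $i=0,1$.

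To see that these pullbacks are isomorphisms I would use that $\iota_d$ realizes $J$ as a translate of the Albanese variety of $W_d$. Indeed $f_d=\iota_d\circ u_d$ is the Albanese morphism of $C_d$, and the rationality of the singularities of $W_d$, i.e.\ $\rr u_{d*}\sO_{C_d}\simeq\sO_{W_d}$, makes $u_d^\ast\colon H^i(W_d,\sO_{W_d})\to H^i(C_d,\sO_{C_d})$ an isomorphism; since $f_d^\ast\colon H^i(J,\sO_J)\to H^i(C_d,\sO_{C_d})$ is an isomorphism for $i=0,1$ by the universal property of the Albanese, the factorization $f_d^\ast=u_d^\ast\circ\iota_d^\ast$ forces $\iota_d^\ast$ to be an isomorphism for $i=0,1$ as well. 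Tensoring with $V$, both outer terms of the exact sequence vanish and $\ker\mathrm{d}p_{W_d}=0$, so $\mathrm{d}p_{W_d}$ is bijective.

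It remains to promote this to an isomorphism of functors. The section $s$ has smooth source and bijective differential, so I would finish by checking that $s$ is formally smooth, equivalently that $\iota_d$ is unobstructed: the relative obstructions (with source fixed) are governed by the groups $H^{\geq 1}(W_d,\iota_d^\ast T_J)\cong V\otimes H^{\geq 1}(W_d,\sO_{W_d})$ together with $H^2(J,T_J)$, and the same Albanese comparison shows that the obstruction map to ${\rm Def}_{W_d}$ is injective; as ${\rm Def}_{W_d}$ is unobstructed this yields the unobstructedness of $\iota_d$. With both functors smooth, $\mathrm{d}p_{W_d}$ bijective, and $s$ a section, $p_{W_d}$ is an isomorphism and ${\rm Def}_{\iota_d}$ is pro-represented by a power series ring in $3g-3$ variables. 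Note that non-hyperellipticity enters only through Theorem~\ref{intrdefW_d}.
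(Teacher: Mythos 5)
Your computation of the differential is sound and is essentially the paper's own argument: the bijectivity of ${\rm d}p_{W_d}$ reduces, via the exact sequence for deformations of a morphism (Ran's sequence \eqref{exactseq} with $f=\iota_d$, together with $H^0(W_d,T_{W_d})=0$), to the statement that $\iota_d^*:H^i(J,\sO_J)\to H^i(W_d,\sO_{W_d})$ is an isomorphism for $i=0,1$, which you prove exactly as in Proposition \ref{albanese} by factoring through $f_d^*$ and $u_d^*$. Your section $s$ is also morally the paper's construction: the paper blows down deformations of $f_d$ to deformations of $\iota_d$ (Proposition \ref{bdmor}) and uses ${\rm Def}_{f_d}\simeq{\rm Def}_{C_d}\simeq{\rm Def}_{W_d}$; your relative Picard/Brill--Noether family is the same object, though the flatness of $\mathcal{W}_d$ over $A$ and the compatibility with base change of $\rr u_{d*}\sO$ that you assert in one line are precisely the content of the Claim inside Proposition \ref{bd} and of Proposition \ref{bdmor}, so they should not be treated as automatic.

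The genuine gap is in your last paragraph. You propose to prove that $\iota_d$ is unobstructed by comparing obstruction spaces ``with source fixed'' valued in $H^{\geq 1}(W_d,\iota_d^*T_J)$ and $H^2(J,T_J)$ and mapping them to an obstruction space for ${\rm Def}_{W_d}$. But $W_d$ is singular and not a local complete intersection, so neither $T^2_{W_d}$ nor Ran's $T^2_f$ is known to be an obstruction space here; the paper explicitly refrains from using Ran's obstruction theory for exactly this reason (see the Remark following Proposition \ref{exactprop}). Moreover, even granting smoothness of both functors, ``bijective differential plus a section'' does not by itself yield an isomorphism of functors of Artin rings: the criterion \cite[Remark 2.3.8]{Se} also requires prorepresentability of the target, which you never address. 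The paper's route avoids both problems: it establishes prorepresentability of ${\rm Def}_{\iota_d}$ via Manetti's criterion \cite[Proposition 13]{Ma} (using $H^0(W_d,T_{W_d})=0$), proves that ${\rm Def}_{f_d}\simeq{\rm Def}_{C_d}$ is smooth and prorepresentable using Namba's theory (legitimate because $C_d$ and $J$ are smooth), and then transports smoothness through the blow-down morphism $t:{\rm Def}_{f_d}\to{\rm Def}_{\iota_d}$, whose differential is an isomorphism. Your argument is repaired the same way: once ${\rm Def}_{\iota_d}$ is known to be prorepresentable, your section $s$ has bijective differential and smooth prorepresentable source, hence is an isomorphism by \cite[Remark 2.3.8]{Se}, and $p_{W_d}=s^{-1}$; no obstruction computation on the $\iota_d$ side is needed.
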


Combining with Theorem \ref{intrdefW_d} we obtain:

\begin{cor}\label{intrcor}
 Under the hypotheses of Theorem \ref{intrdefiota} there exists an isomorphism of functors of Artin rings 
 ${\rm Def}_{\iota_d}\simeq {\rm Def}_C$ for every $1\leq d < g-1$. Hence, if $J$ is a non-hyperelliptic Jacobian, 
 then any infinitesimal deformation of $J$ together with an infinitesimal deformation of  
 the minimal class $W_d$ deforms $J$ along the Jacobian locus.
\end{cor}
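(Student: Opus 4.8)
The plan is to obtain the isomorphism by concatenating the two results just established and then to read off its geometric meaning on the ambient abelian variety. First I would combine the isomorphisms: Theorem \ref{intrdefiota} gives $p_{W_d}\colon {\rm Def}_{\iota_d}\xrightarrow{\sim}{\rm Def}_{W_d}$, while Theorem \ref{intrdefW_d} gives the blowing-down isomorphism $u'_d\colon{\rm Def}_{C_d}\xrightarrow{\sim}{\rm Def}_{W_d}$ together with the identification ${\rm Def}_{C_d}\simeq{\rm Def}_C$ recalled in the introduction (valid since $g\geq 3$). Composing these,
$${\rm Def}_{\iota_d}\xrightarrow[\sim]{p_{W_d}}{\rm Def}_{W_d}\xrightarrow[\sim]{(u'_d)^{-1}}{\rm Def}_{C_d}\xrightarrow{\ \sim\ }{\rm Def}_C,$$
which is the asserted isomorphism of functors of Artin rings. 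All the structural statements then transport automatically along this chain: ${\rm Def}_C$ is unobstructed and prorepresentable by a formal power series in $3g-3$ variables, hence so is ${\rm Def}_{\iota_d}$. This disposes of the first assertion with no further work.

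For the second assertion I would track the effect of the chain on the target $J$. There is a natural forgetful morphism $q\colon{\rm Def}_{\iota_d}\to{\rm Def}_J$ remembering only the deformation of the ambient abelian variety, discarding the subscheme $W_d$ and the embedding. The content of the claim is that, under the isomorphism ${\rm Def}_{\iota_d}\simeq{\rm Def}_C$, the morphism $q$ coincides with the morphism of functors induced by the relative Jacobian construction $\mathcal{C}\mapsto J(\mathcal{C})$, i.e. with the Torelli morphism ${\rm Def}_C\to{\rm Def}_{(J,\Theta)}$ whose image records exactly the deformations of $(J,\Theta)$ that stay on the Jacobian locus $\sJ_g\subset\sA_g$. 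Granting this, any infinitesimal deformation of $\iota_d$ induces a deformation of $J$ that is a relative Jacobian of a deforming curve, hence moves $J$ along $\sJ_g$, which is precisely what is claimed.

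The one step that is not purely formal, and which I expect to be the main obstacle, is this compatibility of $q$ with the Jacobian construction. To settle it I would exploit that both isomorphisms in the chain arise from genuine geometric operations over the Artinian base: a deformation $\mathcal{C}\to{\rm Spec}\,A$ of $C$ produces a relative symmetric product $\mathcal{C}_d\to{\rm Spec}\,A$, and the relative Abel--Jacobi map realizes $\mathcal{W}_d=u_{d*}\sO_{\mathcal{C}_d}$ as a subscheme of the relative Jacobian $J(\mathcal{C})$; the latter is, by construction, the deformation of $J$ recorded by $q$, and it carries its canonical principal polarization. Thus the deformation of $(J,\Theta)$ underlying any element of ${\rm Def}_{\iota_d}$ is the relative Jacobian of a deforming curve. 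Once this identification of $q$ with the codomain of the relative Abel--Jacobi map is verified, the conclusion that $J$ stays on the Jacobian locus is immediate from the very definition of $\sJ_g$ as the image of the Torelli morphism.
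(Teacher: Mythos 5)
Your proposal is correct and follows the same route as the paper, which derives the corollary simply by composing the isomorphism $p_{W_d}\colon {\rm Def}_{\iota_d}\xrightarrow{\sim}{\rm Def}_{W_d}$ of Theorem \ref{intrdefiota} with the isomorphism ${\rm Def}_{W_d}\simeq{\rm Def}_C$ of Theorem \ref{intrdefW_d}. Your additional verification that the forgetful morphism to ${\rm Def}_J$ is identified with the relative Jacobian (Torelli) construction --- via the chain ${\rm Def}_{\iota_d}\simeq{\rm Def}_{f_d}\simeq{\rm Def}_{C_d}$, in which the deformation of the target is preserved and is the Albanese of the relative symmetric product --- is exactly the content the paper leaves implicit in the phrase ``deforms $J$ along the Jacobian locus,'' and is a sound way to make that second assertion precise.
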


The proof of Theorem \ref{intrdefiota} still relies on the use of blowing-down morphisms.
More precisely, we prove that not only deformations of schemes with rational singularities can be blown-down, 
but also deformations of morphisms between them (Proposition \ref{bdmor}). 
Thus there is a well-defined morphism of functors 
$$F: {\rm Def}_{f_d}\longrightarrow {\rm Def}_{\iota_d}$$ 
which we prove to be an isomorphism, by means of Theorem \ref{intrdefW_d} and Ran's formalism of
deformations of morphisms recalled in details in \S\ref{secdefmaps}. 
Finally, as by work of Kempf \cite{Ke2} the forgetful morphism ${\rm Def}_{f_d}\rightarrow {\rm Def}_{C_d}$ is an isomorphism, 
we deduce that so is $p_{W_d}$.
\begin{problem}
 
Pareschi--Popa in \cite[Conjecture A]{PP} suggest that $d$-dimensional subvarieties $X$ of $g$-dimensional ppavs $(A,\Theta)$ 
representing a minimal cohomology class should be characterized as those for which 
the twisted ideal sheaf $\sI_X(\Theta)$ is $GV$ (we recall that a sheaf $\sF$ on an abelian variety $A$ is $GV$ if 
$\codim V^i(\sF)\geq i$ for all $i>0$ where $V^i(\sF):=\{\alpha \in {\rm Pic}^0(A) \, | \, h^i(A,\sF\otimes \alpha)>0\}$). 
In fact, one of the main results of \cite{PP} is that the $GV$ condition on $X$ implies that $X$ has minimal class.
It would be interesting to check whether this property is stable under infinitesimal deformations in order to 
get information concerning the geometry of the corresponding loci in $\sA_g$ for all $d$. Steps in this direction are again due to Pareschi--Popa 
as they prove, without appealing to deformation theory but using the technique of Fourier--Mukai transforms, that for $d=1$ and $d=g-2$ 
this locus coincide precisely with the Jacobian locus in $\sA_g$ (\cite[Theorem C]{PP}).

\end{problem}

\subsection*{Notation} 
In these notes a \emph{scheme} is a  
separated scheme of finite type defined over an algebraically closed field $k$ of characteristic zero, unless otherwise specified.  
We denote by $\Omega_X$ the sheaf of K\"{a}hler differentials and by $T_X$ the tangent sheaf of a scheme $X$.

\subsection*{Acknowledgments} Both authors are grateful to Andreas H\"{o}ring for conversations regarding Conjecture \ref{conjintr}.
LL is grateful to Daniel Huybrechts who introduced him to the theory 
of deformations and for discussions that inspired the results of this work. 
Moreover he is thankful to Daniel Greb, 
S\'{a}ndor Kov\'{a}cs, Andreas Krug, Mihnea Popa, Taro Sano, Christian Schnell, Stefan Schreieder, Lei Song and Zhiyu Tian for answering to all his questions. 
Finally, special thanks go to Alberto Bellardini and Mattia Talpo for correspondences and clarifications on moduli spaces.
ST is grateful to Herb Clemens for having explained her a lot of deformation theory and for the patience with which he answered all her questions. 
She is also thankful to Christopher Hacon and Elham Izadi for very inspiring mathematical conversations. 

\subsection*{Support}
This collaboration started when ST visited the University of Illinois at Chicago 
supported by the AWM-NSF Mentoring Grant (NSF award number DMS-0839954), while LL was a graduate student there. 
Both authors are grateful to UIC for the nice working environment and the kind hospitality. 
LL was supported by the SFB/TR45 ``Periods, moduli spaces, and arithmetic of algebraic varieties'' of the DFG (German Research Foundation).

\section{Deformations of morphisms}\label{secdefmaps}

We begin by recalling Ran's theory on deformations of morphisms between compact complex spaces extending works of Horikawa in the smooth case
(\emph{cf}. \cite{Ran2,Ran3,Ran4}). 
The deformations considered by Ran allow to deform both the domain and the codomain of a morphism.
For the purposes of this work we present Ran's theory for 
the category of schemes defined over an algebraically closed field $k$ of 
characteristic zero.

Let $X$ be a reduced projective $k$-scheme. We define the spaces
$$T^i_X:={\rm Ext}_{\sO_X}^i(\Omega_X,\sO_X)$$ and 
denote by ${\rm Def}_X$ the functor of Artin rings of deformations of $X$ up to isomorphism. 
We recall that $T_X^1$ is the tangent space to ${\rm Def}_X$. Under this identification
a first-order deformation $\pi: \mathcal{X}\rightarrow {\rm Spec}\,\big(k[t]/(t^2)\big)$ of $X$ is sent 
to the extension class determined by the conormal sequence of the closed immersion
$X\subset \mathcal{X}$: 
 $$0\longrightarrow \sO_X\longrightarrow \Omega_{\mathcal{X}|X}\longrightarrow \Omega_X\longrightarrow 0$$
 (the fact that $\pi$ is flat implies that $\sO_X$ is the conormal bundle of $X\subset \mathcal{X}$, while the fact that 
 $X$ is reduced implies that the conormal sequence is exact also on the left).
Moreover, if $X$ is a locally complete intersection, then $T_X^2$ is an obstruction space 
(\emph{cf}. \cite[Theorem 2.4.1 and Proposition 2.4.8]{Se}).

 Let $Y$ be another reduced projective $k$-scheme, and let $f:X\rightarrow Y$ be a morphism. 
 A \emph{deformation of} $f:X\rightarrow Y$ over an Artinian local $k$-algebra $A$ with residue field $k$ is a diagram of commutative squares 
 and triangles
 
   \begin{eqnarray}\label{definition}
\xymatrix@=32pt{
  X \ar[dd]\ar[rd]^f\ar[rr] && \mathcal{X} \ar'[d][dd]^g\ar[rd]^{\tilde f} \\
 & Y \ar[rr]\ar[ld] && \mathcal{Y}\ar[ld]^h \\ 
 {\rm Spec\,} k \ar[rr] && {\rm Spec}\, A \\}
 \end{eqnarray}
 
 \noindent such that $\mathcal{X}$ and $\mathcal{Y}$ are deformations of $X$ and $Y$ over $A$ respectively  
 and $\tilde f$ restricts to $f$ when pulling-back to ${\rm Spec\,}k$. 
 We say that a deformation $\widetilde f:\mathcal{X}\rightarrow \mathcal{Y}$ of $f$ is isomorphic to another deformation $\widehat{f}:\mathcal{X}'
 \rightarrow \mathcal{Y}'$ of $f$ if there are isomorphisms $\varphi:\mathcal{X}\rightarrow \mathcal{X'}$ and $\phi:\mathcal{Y}
 \rightarrow \mathcal{Y}'$ over ${\rm Spec}\, A$ such that $\widehat{f}\circ \varphi = \phi \circ \widetilde{f}$ and $j^{-1} \circ (\varphi \times _{k}A)\circ i = 
 {\rm id}_X$ where $i:X\rightarrow \mathcal{X}\times _k A$ and $j:X'\rightarrow \mathcal{X}'\times _k A$ are the isomorphisms determined by 
 $\mathcal{X}$ and $\mathcal{X}'$ respectively, and similarly for $\mathcal{Y}$, $\mathcal{Y}'$, and $\phi$.
 We denote by ${\rm Def}_f$ the functor of Artin rings of deformations of $f$ up to isomorphism.
The functors ${\rm Def}_X$ and ${\rm Def}_f$ satisfy Schlessinger's conditions $(H_0)$, $(H_1)$, $(H_2)$ and $(H_3)$ when both $X$ and $Y$ are 
projective schemes (\emph{cf}. \cite{S}).

\subsection{Tangent space}

 One of the central results in \cite{Ran2} is that the first-order
 deformations of a morphism $f:X\rightarrow Y$ are controlled by a certain space $T_f^1$ defined as follows. Let
 \begin{eqnarray}\label{delta}
 \delta_0: \sO_Y\longrightarrow f_*\sO_X \quad \mbox{ and }\quad 
 \delta_{1}:f^*\Omega_Y\longrightarrow \Omega_X
 \end{eqnarray}
 be the natural morphisms induced by $f$ and let 
 $${\rm ad}(\delta_0): f^*\sO_Y\longrightarrow \sO_X$$ be the morphism induced by $\delta_0$ via adjunction.
 Then we define $T^1_f$ to be the abelian group consisting 
 of isomorphism classes of triples $(e_X,e_Y,\gamma)$ such that 
$e_X$ and $e_Y$ are classes in ${\rm Ext}^1_{\sO_X}(\Omega_X,\sO_X)$ and 
	${\rm Ext}^1_{\sO_Y}(\Omega_Y,\sO_Y)$ determined by the conormal sequences of some deformations $\mathcal{X}$ and $\mathcal{Y}$ 
	of $X$ and $Y$ respectively:
	\begin{eqnarray}\label{ex}
	e_X \,: \,0\rightarrow \sO_X\rightarrow \Omega_{\mathcal{X}|X} \rightarrow \Omega_X\rightarrow 0\,\quad \mbox{ and }\,\quad 
	e_Y\, : \, 0\rightarrow \sO_Y\rightarrow \Omega_{\mathcal{Y}|Y} \rightarrow \Omega_Y\rightarrow 0;
	\end{eqnarray}
  and $\gamma:f^*\Omega_{\mathcal{Y}|Y}\rightarrow \Omega_{\mathcal{X}|X}$ is a morphism such that the following diagram
  
\begin{eqnarray}\label{diagrT1a}
\xymatrix@=32pt{
 f^*\sO_Y \ar[r]\ar[d]^{{\rm ad} (\delta_0)} & f^*\Omega_{\mathcal{Y}|Y} \ar[r]\ar[d]^{\gamma}  
& f^*\Omega_Y \ar[d]^{\delta_1}\\
 \sO_X \ar[r]  & \Omega_{\mathcal{X}|X}    \ar[r] & \Omega_X \\}
   \end{eqnarray}

 \noindent commutes. For future reference we recall \cite[Proposition 3.1]{Ran2} revealing the role of $T_f^1$.
\begin{prop}\label{tangentspacedef}
Let $X$ and $Y$ be projective reduced $k$-schemes and let $f:X\rightarrow Y$ be a morphism. Then $T_f^1$ is the tangent space to ${\rm Def}_f$.
\end{prop}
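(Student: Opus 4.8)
The plan is to produce a natural isomorphism of $k$-vector spaces between ${\rm Def}_f\big(k[t]/(t^2)\big)$, the set of first-order deformations of $f$, and the group $T^1_f$. Since over the dual numbers the underlying topological space of an infinitesimal deformation does not change, a deformation $\tilde f$ of $f$ has the same underlying continuous map as $f$; hence $\tilde f$ amounts to a morphism of sheaves of $k[t]/(t^2)$-algebras $\tilde f^\sharp\colon\sO_{\mathcal Y}\to f_*\sO_{\mathcal X}$ lifting $\delta_0$. First I would define the forward map $\Phi$: to a deformation as in \eqref{definition} over $A=k[t]/(t^2)$ I attach the classes $e_X\in T^1_X$ and $e_Y\in T^1_Y$ read off from the conormal sequences of $X\subset\mathcal X$ and $Y\subset\mathcal Y$ --- these represent $\mathcal X$ and $\mathcal Y$ under the identifications $T^1_X\simeq{\rm Def}_X(A)$ and $T^1_Y\simeq{\rm Def}_Y(A)$ recalled above --- together with the morphism $\gamma\colon f^*\Omega_{\mathcal Y|Y}\to\Omega_{\mathcal X|X}$ obtained by restricting to $X$ the canonical pullback map $\tilde f^*\Omega_{\mathcal Y}\to\Omega_{\mathcal X}$, where $\tilde f|_X=f$ is used to identify $(\tilde f^*\Omega_{\mathcal Y})\otimes_{\sO_{\mathcal X}}\sO_X\simeq f^*\Omega_{\mathcal Y|Y}$.

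Next I would check that $\Phi$ lands in $T^1_f$, i.e. that \eqref{diagrT1a} commutes: this is naturality of K\"ahler differentials applied to $\tilde f$. The right square is the statement that $\gamma$ reduces modulo $(t)$ to $\delta_1$, while the left square records that $\tilde f^\sharp$ carries the ideal $(t)\subset\sO_{\mathcal Y}$ to $(t)\subset f_*\sO_{\mathcal X}$ via ${\rm ad}(\delta_0)$, equivalently that $\gamma$ sends the generator $dt$ of the pulled-back conormal sequence to the generator $dt$ of that of $\mathcal X$. A direct verification then shows that isomorphic deformations yield isomorphic triples, so $\Phi$ is well defined. For the inverse, given a triple $(e_X,e_Y,\gamma)$ the classes $e_X,e_Y$ produce deformations $\mathcal X,\mathcal Y$ again by $T^1_X\simeq{\rm Def}_X(A)$ and $T^1_Y\simeq{\rm Def}_Y(A)$, and it remains to build the lift $\tilde f$ out of $\gamma$.

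This last step is the heart of the argument and the step I expect to be the main obstacle: for fixed $\mathcal X$ and $\mathcal Y$ one must show that the morphisms $\tilde f\colon\mathcal X\to\mathcal Y$ over $A$ restricting to $f$ are in natural bijection with the maps $\gamma$ making \eqref{diagrT1a} commute. I would establish this locally. Over an affine open, lifting the algebra map $\delta_0$ through the two square-zero extensions $\sO_{\mathcal X}\twoheadrightarrow\sO_X$ and $\sO_{\mathcal Y}\twoheadrightarrow\sO_Y$, with the prescribed map $f^\sharp$ on the square-zero ideals $(t)$, is by the universal property of the module of differentials exactly the datum of a map on differentials compatible with the two extension data; the commutativity of \eqref{diagrT1a} is precisely the compatibility that forces these local lifts to agree on overlaps and hence to glue to a global $\tilde f^\sharp$. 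Granting this, $\Phi$ and the inverse construction are mutually inverse on isomorphism classes. Finally I would observe that $\Phi$ is additive: the group law on $T^1_f$ is the Baer sum of the extensions $e_X,e_Y$ with the evident addition of the $\gamma$'s, matching the vector space structure on ${\rm Def}_f(A)$ induced by $A\times_k A$. Granting the local square-zero computation, all remaining verifications are formal consequences of the functoriality of $\Omega_{(-)}$ and of the identifications of $T^1_X$ and $T^1_Y$ with the respective tangent spaces.
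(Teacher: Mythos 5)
Your argument follows the same route as the paper's proof: identify $e_X,e_Y$ with the first-order deformations $\mathcal X,\mathcal Y$ via their conormal sequences, then match lifts $\tilde f\colon\mathcal X\to\mathcal Y$ of $f$ with maps $\gamma$ making the right square of \eqref{diagrT1a} commute, the left square encoding that $\tilde f$ is a morphism over ${\rm Spec}\,A$. The only difference is that the step you correctly single out as the heart of the matter --- the bijection between such lifts $\tilde f$ and such maps $\gamma$ --- is not proved from scratch in the paper but quoted from \cite[Theorem 1.6]{BE}; your local square-zero-extension computation is precisely the content of that result.
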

\begin{proof}
As already pointed out earlier, the datum of two extension classes $e_X\in T_X^1$ and $e_Y\in T^1_Y$ is equivalent to giving two Cartesian diagrams

\begin{eqnarray*}
  \centerline{ \xymatrix@=32pt{
 X \ar[r]\ar[d] & \mathcal{X} \ar[d]^{g} &  Y\ar[d] \ar[r]& \mathcal{Y} \ar[d]^{h}\\
 {\rm Spec}\, k \ar[r] & {\rm Spec}\, \big(k[t]/(t ^2)\big) & {\rm Spec}\, k \ar[r] & {\rm Spec}\, \big(k[t]/(t^2)\big) \\}}
\end{eqnarray*}

\noindent such that both $g$ and $h$ are flat morphisms. 
On the other hand, as it is shown in \cite[Theorem 1.6]{BE}, the existence of a morphism $\tilde f:\mathcal{X}\rightarrow\mathcal{Y}$ such that the 
top square of \eqref{definition} commutes is equivalent to the existence of a morphism $\gamma: f^*\Omega_{\mathcal{Y}|Y}\rightarrow 
\Omega_{\mathcal{X}|X}$ such that the right square of \eqref{diagrT1a} commutes.
Finally, it is not hard to prove that the commutativity of the rightmost triangle in \eqref{definition} is equivalent to the commutativity 
of the left 
square of \eqref{diagrT1a}.
\end{proof}

\subsection{Ran's exact sequence}

In order to study the space $T_f^1$ usually one appeals to an exact sequence relating $T_f^1$ to the tangent spaces 
$T_X^1$ and $T_Y^1$. This sequence turns out to be extremely useful to 
study stability and co-stability properties of a morphism, and furthermore, in some situations, it suffices to determine the group 
$T_f^1$ itself (\emph{cf}. \cite{Ran3}).

Let $T^0_f$ be the group consisting of pairs of morphisms 

\begin{eqnarray}\label{morphisms}
 a:\Omega_X\longrightarrow \sO_{X}\quad \mbox{ and }\quad b:\Omega_Y\longrightarrow 
 \sO_Y
  \end{eqnarray}

  such that the following diagram 
  
  \begin{eqnarray*}
\centerline{ \xymatrix@=32pt{
 f^*\Omega_Y \ar[r]^{f^*b}\ar[d]^{\delta_1} & f^*\sO_Y\ar[d]^{{\rm ad}(\delta_0)} \\
 \Omega_X \ar[r]^{a} & \sO_{X} \\}} 
  \end{eqnarray*}   
  
 \noindent commutes, and consider the sequence 
\begin{gather}\label{exactseq}
0\longrightarrow T_f^0\longrightarrow T_X^0\oplus T_Y^0\stackrel{\lambda_0}{\longrightarrow} 
{\rm Hom}_{\sO_{X}}(f^* \Omega_Y,\sO_X)\stackrel{\lambda_1}{\longrightarrow}
T_f^1\longrightarrow T_X^1\oplus T^1_Y\stackrel{\lambda_2}{\longrightarrow} {\rm Ext}^1_{\sO_{X}}(\rl f^*\Omega_Y,\sO_X),
\end{gather}
where the morphisms without names are the obvious ones, while the others are defined as follows. 
 Given a pair of morphisms as in \eqref{morphisms}, we set
 $\lambda_0 (a,b)= a\circ \delta_1-{\rm ad}(\delta_0)\circ f^*b$.
On the other hand, $\lambda_1$ takes a morphism $\varepsilon:f^*\Omega_Y\rightarrow \sO_X$ to
the trivial extensions in $T^1_X$ and $T^1_Y$ together with the morphism  
$\delta= \left( \begin{smallmatrix} {\rm ad}(\delta_0) &\varepsilon \\ 0 & \delta_1 \end{smallmatrix} \right)$  
so that the following diagram 

\begin{eqnarray*}
\centerline{ \xymatrix@=32pt{
f^*\sO_Y \ar[r]\ar[d]^{{\rm ad}(\delta_0)} & f^*\sO_{Y}\oplus f^*\Omega_Y \ar[r]\ar[d]^{\delta}  & f^*\Omega_Y \ar[d]^{\delta_1}\\
\sO_X \ar[r] & \sO_X\oplus \Omega_X \ar[r] & \Omega_X \\}} 
 \end{eqnarray*}

  \noindent commutes.
Finally, in order to define $\lambda_2$, we introduce some additional notation.
Let 
\begin{eqnarray}\label{truncation}
\xi:f_*\sO_X\longrightarrow \rr f_*\sO_X\quad \mbox{ and }\quad \zeta:\rl f^*\Omega_Y\longrightarrow f^*\Omega_Y 
\end{eqnarray}
be the truncation morphisms
 (see \cite[Exercise 2.32]{Huy}) and set
 \begin{eqnarray}\label{deltabar}
 \bar \delta_0:=\xi \circ \delta_0 :\sO_Y\longrightarrow \rr f_*\sO_X\quad \mbox{ and }\quad 
 \bar \delta_1:=\delta_1 \circ \zeta :\rl f^*\Omega_Y\longrightarrow \Omega_X.
 \end{eqnarray}
 We denote by $\lambda_2^1$ and $\lambda_2^2$ the components of $\lambda_2$ and we set 
 $\lambda_2(e_X,e_Y)=\lambda_2^1(e_X)-\lambda_2^2(e_Y)$ where $e_X$ and $e_Y$ are extension classes as in \eqref{ex}, 
so we only need to define $\lambda_2^1$ and $\lambda_2^2$.
Thinking of the extensions $e_X\in T^1_X$ and $e_Y\in T_Y^1$ as morphisms
of complexes 
$$\alpha:\Omega_X\longrightarrow \sO_X[1]\quad \mbox{ and }\quad \beta:\Omega_Y\longrightarrow \sO_Y[1],$$ 
we then set
$$\lambda_2^1(\alpha)=\alpha\circ \bar \delta_1 \quad \mbox{ and }\quad \lambda_2^2(\beta)={\rm ad}\,(\bar \delta_0[1]\circ \beta)$$
where ${\rm ad}(-)$ denotes the adjunction isomorphism of \cite[Corollary 5.11]{Ha1}. 
More concretely, we have $\lambda_2^2(\beta)={\rm ad}\,(\bar \delta_0[1] \circ \beta) \simeq \varrho\circ \rl f^*(\bar \delta_0[1]\circ \beta)$
where $\varrho: \rl f^* \rr f_* \sO_X\rightarrow \sO_X$ is the natural morphism induced by adjunction.
\begin{prop}\label{exactprop}
If $f:X\rightarrow Y$ is a morphisms of reduced $k$-schemes such that $f(X)$ is not contained in the singular locus of $Y$,
then the sequence \eqref{exactseq} is exact.
\end{prop}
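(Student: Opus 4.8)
The plan is to realize \eqref{exactseq} as the low-degree part of the long exact cohomology sequence attached to a single distinguished triangle in the derived category of $k$-vector spaces, and then to match the induced and connecting maps of that triangle with the explicit $\lambda_0,\lambda_1,\lambda_2$. First I would record that, by the derived adjunction $(\rl f^*,\rr f_*)$ of \cite[Corollary 5.11]{Ha1}, there are natural identifications
$${\rm Ext}^i_{\sO_X}(\rl f^*\Omega_Y,\sO_X)\simeq {\rm Ext}^i_{\sO_Y}(\Omega_Y,\rr f_*\sO_X).$$
Composition with $\bar\delta_1:\rl f^*\Omega_Y\to\Omega_X$ yields maps $\bar\delta_1^{*}:T^i_X\to{\rm Ext}^i_{\sO_X}(\rl f^*\Omega_Y,\sO_X)$, while composition with $\bar\delta_0:\sO_Y\to\rr f_*\sO_X$ followed by the above adjunction yields maps $\bar\delta_{0,*}:T^i_Y\to{\rm Ext}^i_{\sO_X}(\rl f^*\Omega_Y,\sO_X)$; for $i=1$ these are exactly $\lambda_2^1$ and $\lambda_2^2$. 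I would then let $R^\bullet_f$ be the homotopy fibre (shifted cone) of the difference
$${\rm RHom}_{\sO_X}(\Omega_X,\sO_X)\oplus{\rm RHom}_{\sO_Y}(\Omega_Y,\sO_Y)\xrightarrow{\ \bar\delta_1^{*}-\bar\delta_{0,*}\ }{\rm RHom}_{\sO_X}(\rl f^*\Omega_Y,\sO_X),$$
so that its cohomology automatically sits in a long exact sequence involving $T^i_X\oplus T^i_Y$ and ${\rm Ext}^i_{\sO_X}(\rl f^*\Omega_Y,\sO_X)$; on $H^0$ the difference map is visibly $\lambda_0$, so the whole matter is reduced to identifying $H^0(R^\bullet_f)$ with $T^0_f$ and $H^1(R^\bullet_f)$ with $T^1_f$, compatibly with $\lambda_1$ and $\lambda_2$.

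The identification at $H^0$ is immediate: a degree-zero element of the fibre is a pair $(a,b)\in T^0_X\oplus T^0_Y$ with $\bar\delta_1^{*}a=\bar\delta_{0,*}b$, which upon unwinding the adjunction is exactly the square defining $T^0_f$ in \eqref{morphisms}. For $H^1$ I would unwind a degree-one cocycle of the cone as a pair of extension classes $e_X\in T^1_X$, $e_Y\in T^1_Y$ together with a null-homotopy of $\lambda_2^1(e_X)-\lambda_2^2(e_Y)$; the key point is that such a null-homotopy is precisely the datum of a morphism $\gamma:f^*\Omega_{\mathcal Y|Y}\to\Omega_{\mathcal X|X}$ making \eqref{diagrT1a} commute. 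Here I would invoke \cite[Theorem 1.6]{BE} to pass between $\gamma$ and the geometric deformation, and use that $X$ and $Y$ are reduced so that the conormal sequences \eqref{ex} are genuine short exact sequences, making $e_X,e_Y$ honest classes rather than derived ones.

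This is exactly where the hypothesis $f(X)\not\subset Y_{\rm sing}$ intervenes, and where the real difficulty lies. Since $f^*$ is not exact, the pulled-back conormal sequence of $Y$ becomes a triangle only after applying $\rl f^*$, and the comparison of $\rl f^*\Omega_Y$ with $f^*\Omega_Y$ is governed by the truncation $\zeta$ of \eqref{truncation}. The assumption guarantees that $\Omega_Y$ is locally free near the generic point of $f(X)$, hence that $\zeta$ and $\delta_1$ are generically injective; this is what is needed to see that the above null-homotopy is detected on the honest middle term $\Omega_{\mathcal X|X}$, i.e.\ that $\lambda_2^1(e_X)-\lambda_2^2(e_Y)=0$ really is realized by a morphism $\gamma$ of the extensions \eqref{ex}, with no spurious contribution from $\mathcal H^{-1}(\rl f^*\Omega_Y)$. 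Granting this, exactness of \eqref{exactseq} is simply the long exact sequence of the triangle defining $R^\bullet_f$, the remaining matchings of $\lambda_1$ and $\lambda_2$ being a bookkeeping exercise with the cone using the matrix description $\delta=\left(\begin{smallmatrix}{\rm ad}(\delta_0)&\varepsilon\\0&\delta_1\end{smallmatrix}\right)$ of $\lambda_1$ and the descriptions $\lambda_2^1(\alpha)=\alpha\circ\bar\delta_1$, $\lambda_2^2(\beta)={\rm ad}(\bar\delta_0[1]\circ\beta)$.

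Alternatively, and closer to the spirit of \cite{Ran2}, one can bypass the derived reformulation and check the four exactness points by a direct chase. Exactness at $T^0_X\oplus T^0_Y$ and at ${\rm Hom}_{\sO_X}(f^*\Omega_Y,\sO_X)$ reduces to computing the automorphisms of the trivial extensions $\sO_X\oplus\Omega_X$ and $\sO_Y\oplus\Omega_Y$, parameterized by $T^0_X$ and $T^0_Y$, which act on the $\gamma$-component of $\lambda_1(\varepsilon)$ exactly through $\lambda_0$; exactness at $T^1_f$ amounts to splitting $e_X$ and $e_Y$ when they vanish and reading off $\gamma$ as the matrix above. The substantive point is again exactness at $T^1_X\oplus T^1_Y$, which is the classical criterion that, given two short exact sequences and prescribed maps ${\rm ad}(\delta_0)$ and $\delta_1$ on their outer terms, a compatible map of middle terms exists if and only if the induced classes $\lambda_2^1(e_X)$ and $\lambda_2^2(e_Y)$ agree on $\rl f^*\Omega_Y$. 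I expect this last step, together with verifying that these induced classes are well defined in ${\rm Ext}^1_{\sO_X}(\rl f^*\Omega_Y,\sO_X)$ via the hypothesis on $f(X)$, to be the main obstacle; the rest is formal.
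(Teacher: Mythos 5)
Your proposal is correct in outline and, in its second (``direct chase'') variant, is essentially the paper's own argument: the paper likewise treats the three outer exactness points as formal and concentrates entirely on exactness at $T^1_X\oplus T^1_Y$, which it establishes by showing that the sheaf-level commutativity of \eqref{diagrT1a} is equivalent to the derived-level commutativity of the corresponding diagram of distinguished triangles, so that the existence of $\gamma$ is exactly the vanishing of $\lambda_2^1(e_X)-\lambda_2^2(e_Y)$. The one point where your justification is imprecise is the mechanism by which the hypothesis $f(X)\not\subset Y_{\rm sing}$ enters: it is not that $\delta_1$ or $\zeta$ is generically injective (for a constant map to a smooth point $\delta_1$ is zero, yet the hypothesis holds and the statement is fine); rather, the hypothesis forces $L^{-1}f^*\Omega_Y$ to be a torsion sheaf supported over $Y_{\rm sing}$, and since $X$ is reduced the connecting map $L^{-1}f^*\Omega_Y\to f^*\sO_Y=\sO_X$ must vanish, so the pulled-back conormal sequence \eqref{pullbackseq} remains exact on the left. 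That left-exactness is precisely what allows one to recover the honest morphism $\gamma$ of \eqref{diagrT1a} from the derived datum by taking $H^0$, which is the content of your ``no spurious contribution from $\mathcal{H}^{-1}(\rl f^*\Omega_Y)$'' and is the only non-formal step; with that correction your argument goes through.
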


\begin{proof}
 We show exactness only at the term $T^1_X\oplus T_Y^1$, exactness at the other terms follows easily from the definition of our objects and maps.
 First of all we show that the composition $T^1_f\rightarrow T^1_X\oplus T^1_Y\stackrel{\lambda_2}{\rightarrow} 
 {\rm Ext}_{\sO_X}^1(\rl f^*\Omega_Y,\sO_X)$ is zero. 
 Let $e_X$ and $e_Y$ be two extension classes as in \eqref{ex} which we think of
as morphisms of complexes $\alpha:\Omega_X\rightarrow 
 \sO_X[1]$ and $\beta:\Omega_Y\rightarrow \sO_Y[1]$, 
 and suppose that there exists a morphism 
 $\gamma:f^*\Omega_{\mathcal{Y}|Y}\rightarrow \sO_{\mathcal{X}|X}$
 such that 
 \eqref{diagrT1a} commutes. 
 By our assumption on $f(X)$, together with generic freeness and base change, we see that the sheaf $L^{-1}f^*\Omega_Y$ is torsion on $X$. 
This in particular yields the exactness of 
  the sequence 
  \begin{eqnarray}\label{pullbackseq}
   0\longrightarrow f^*\sO_Y\longrightarrow f^*\Omega_{\mathcal{Y}|Y}\longrightarrow f^*\Omega_Y\longrightarrow 0.
  \end{eqnarray}
 Therefore the commutativity of \eqref{diagrT1a} implies the commutativity of the following diagram of distinguished triangles 
 
  \begin{eqnarray}\label{diagrT1b}
\xymatrix@=32pt{
\rl f^*\sO_Y \ar[r]\ar[d]^{{\rm ad}(\bar \delta_0)} & \rl f^*\Omega_{\mathcal{Y}|Y} \ar[r]\ar[d]^{\bar \gamma}  
& \rl f^*\Omega_Y \ar[d]^{\bar \delta_1} \ar[r]^{\rl f^*\beta} & \rl f^*\sO_Y[1]\ar[d]^{{\rm ad}(\bar \delta_0)[1]}\\
 \sO_X \ar[r]  & \Omega_{\mathcal{X}|X} \ar[r] & \Omega_X \ar[r]^{\alpha} & \sO_X[1] \\}
   \end{eqnarray}
 
 \noindent where $\bar \gamma$ denotes the composition $\rl f^*\Omega_{\mathcal{Y}|Y}\rightarrow f^*\Omega_{\mathcal{Y}|Y}\stackrel{\gamma}{\rightarrow}
   \Omega_{\mathcal{X}|X}$.
  Hence in particular the commutativity of the right-most square tells us that 
\begin{eqnarray*}  
\lambda_2^2(\beta) & = & {\rm ad}(\bar \delta_0[1]\circ \beta)\\
& \simeq & \varrho \circ \rl f^*(\bar \delta_0[1])\circ \rl f^* \beta\\
& \simeq & {\rm ad}(\bar \delta_0[1]) \circ \rl f^*\beta\\
& \simeq & \alpha \circ \bar \delta_1 =  \lambda_2^1(\alpha),
 \end{eqnarray*}
and therefore $\lambda_2(\alpha,\beta)\simeq 0$.

On the other hand, if $\lambda_2(\alpha,\beta)= \lambda_2^1(\alpha)-\lambda_2^2(\beta)\simeq 0$, then 
 $\alpha \circ \bar \delta_1 \simeq {\rm ad}(\bar \delta_0[1] \circ \beta) \simeq {\rm ad}(\bar \delta_0[1]) \circ \rl f^*\beta$ which implies the 
 commutativity of \eqref{diagrT1b}. By taking cohomology in degree $0$ and by the fact that \eqref{pullbackseq} is exact, 
 we conclude that \eqref{diagrT1a} is commutative too. Therefore the triple $(e_X,e_Y,\gamma)$ lies in the image of $\lambda_1$.
  \end{proof}
  
  \begin{rmk}
   The functor ${\rm Def}_f$ comes equipped with two \emph{forgetful morphisms} $p_X:{\rm Def}_f\rightarrow {\rm Def}_X$ and 
   $p_Y:{\rm Def}_f\rightarrow {\rm Def}_Y$ obtained by the fact that a deformation of $f$ determines both 
   a deformation of $X$ and one of $Y$. 
   The differential ${\rm d}p_X$ is identified to the morphism $T_f^1\rightarrow T_X^1$ of the sequence \eqref{exactseq}, and similarly
   for the differential ${\rm d}p_Y$.
  \end{rmk}

  \begin{rmk}
   Obstruction spaces to the functor ${\rm Def}_f$ are studied in \cite{Ran2} in complete generality. 
   However, we are able to follow Ran's argument only under the additional hypothesis that the involved spaces are 
   locally complete intersections. As in this paper we will be dealing with 
   varieties which are not locally complete intersections, we refrain to give a systematic description of the obstructions, but rather we refer 
   to \cite{Ran2} and \cite{Ran4} to get a flavor of this theory.
  \end{rmk}

\subsection{Blowing-down deformations}\label{secbdm}

We recall that a resolution of singularities $X$ of a variety $Y$ 
with at most rational singularities induces a morphism of functors of Artin rings 
${\rm Def}_X\rightarrow {\rm Def}_Y$ (\emph{cf}. \cite{Wa} for the affine case, and \cite[Proposition 2.1]{Hui} and 
\cite[Corollary 2.13]{Sa} for the projective case). We extend this fact by relaxing the hypotheses on $X$.

\begin{prop}\label{bd}
Let $X$ and $Y$ be projective integral $k$-schemes and let
$f:X\rightarrow Y$ be a morphism such that $\rr f_*\sO_X\simeq \sO_Y$. 
Then $f$ defines a morphism of functors
\begin{gather}\label{blowingdown}
f':{\rm Def}_X\longrightarrow {\rm Def}_Y
\end{gather}
where to a deformation $\mathcal X$ of $X$ over a local Artinian $k$-algebra $A$ with residue field $k$
associates the deformation $\mathcal{Y}:=(Y,f_*\sO_{\mathcal X})$ of $Y$ over $A$.
\end{prop}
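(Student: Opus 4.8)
The plan is to build the candidate $\mathcal{Y}=(Y,f_*\sO_{\mathcal X})$ by hand and to verify the three properties that make it a deformation of $Y$ over $A$: that the sheaf of $A$-algebras $f_*\sO_{\mathcal X}$ endows the topological space $|Y|$ with a \emph{scheme} structure, that this scheme is \emph{flat} over $A$, and that its \emph{central fibre} is $Y$. Since a flat deformation over an Artinian base is a thickening of its special fibre, I regard $\mathcal X$ as the ringed space $(|X|,\sO_{\mathcal X})$ with $\sO_{\mathcal X}$ a sheaf of $A$-algebras on $|X|$, flat over $A$, satisfying $\sO_{\mathcal X}\otimes_A k\simeq\sO_X$; the morphism $f$ is then a continuous map $|X|\to|Y|$ along which I push forward. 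As $X$ is proper over $k$ and $Y$ is separated, $f$ is proper, so all direct images below are coherent. Throughout, the hypothesis enters in the form $f_*\sO_X\simeq\sO_Y$ together with $R^if_*\sO_X=0$ for $i>0$.

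The technical heart, and the main obstacle, is a combined flatness-and-base-change statement, which I would prove by induction on the length of $A$ via small extensions $0\to I\to A\to\bar A\to 0$ with $\mathfrak{m}_A I=0$ (so $I$ is a $k$-vector space). Setting $\bar{\sO}_{\mathcal X}:=\sO_{\mathcal X}\otimes_A\bar A$, flatness of $\sO_{\mathcal X}$ yields the short exact sequence $0\to I\otimes_k\sO_X\to\sO_{\mathcal X}\to\bar{\sO}_{\mathcal X}\to 0$, using $\mathrm{Tor}^A_1(\bar A,\sO_{\mathcal X})=0$ and $I\otimes_A\sO_{\mathcal X}=I\otimes_k\sO_X$. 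Applying $\rr f_*$ and invoking both $\rr f_*\sO_X\simeq\sO_Y$ and the inductive hypothesis that $\rr f_*\bar{\sO}_{\mathcal X}=f_*\bar{\sO}_{\mathcal X}$ sits in degree zero, the higher direct images of $I\otimes_k\sO_X$ vanish, so I obtain $R^if_*\sO_{\mathcal X}=0$ for $i>0$ and a short exact sequence $0\to I\otimes_k\sO_Y\to f_*\sO_{\mathcal X}\to f_*\bar{\sO}_{\mathcal X}\to 0$. By the local criterion of flatness for a small extension — an $A$-module $N$ with $N\otimes_A\bar A$ flat over $\bar A$ is $A$-flat exactly when $I\otimes_k(N\otimes_A k)\to N$ is injective — this sequence shows that $N:=f_*\sO_{\mathcal X}$ is $A$-flat, while its left-exactness identifies $N\otimes_A k$ with $\sO_Y$. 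The identical argument along an arbitrary $A\to B$ gives the base-change isomorphism $f_*(\sO_{\mathcal X}\otimes_A B)\simeq(f_*\sO_{\mathcal X})\otimes_A B$.

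With these in hand the scheme structure is formal, again by induction: the displayed sequence exhibits $f_*\sO_{\mathcal X}$ as a square-zero extension of the scheme $\bar{\mathcal Y}=(Y,f_*\bar{\sO}_{\mathcal X})$ by the coherent $\sO_Y$-module $I\otimes_k\sO_Y$, and a square-zero extension of a scheme by a quasi-coherent module is again a scheme (affine-locally it is the spectrum of the corresponding square-zero ring extension). Hence $\mathcal Y=(Y,f_*\sO_{\mathcal X})$ is a scheme, flat over $A$, with central fibre $(Y,\sO_Y)=Y$, i.e. a deformation of $Y$ over $A$; the base of the induction $A=k$ is the tautology $f_*\sO_X=\sO_Y$.

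Finally I would check that $\sO_{\mathcal X}\mapsto\mathcal Y$ is a morphism of functors. An isomorphism of deformations $\mathcal X\simeq\mathcal X'$ over $A$ is an isomorphism of sheaves of $A$-algebras on $|X|$ restricting to the identity on $\sO_X$; applying $f_*$ yields an isomorphism $f_*\sO_{\mathcal X}\simeq f_*\sO_{\mathcal X'}$ restricting to the identity on $\sO_Y$, hence an isomorphism $\mathcal Y\simeq\mathcal Y'$. Compatibility with base change along $A\to B$ is precisely the base-change isomorphism above, and the trivial deformation maps to the trivial one since $f_*(\sO_X\otimes_k A)=\sO_Y\otimes_k A$. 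This produces the natural transformation $f'\colon{\rm Def}_X\to{\rm Def}_Y$. I expect the flatness-and-base-change step to be the only substantive point: it is exactly there that the full strength of $\rr f_*\sO_X\simeq\sO_Y$ — and not merely $f_*\sO_X\simeq\sO_Y$ — is needed, the vanishing of the higher direct images being what simultaneously forces the $A$-flatness of $f_*\sO_{\mathcal X}$ and the preservation of the central fibre.
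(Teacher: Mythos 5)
Your argument is correct, but it takes a genuinely different route from the paper's. You run an induction on the length of $A$ through small extensions $0\to I\to A\to\bar A\to 0$: flatness of $\sO_{\mathcal X}$ gives $0\to I\otimes_k\sO_X\to\sO_{\mathcal X}\to\bar\sO_{\mathcal X}\to 0$, the hypothesis $\rr f_*\sO_X\simeq\sO_Y$ kills the higher direct images of the kernel, and the local criterion of flatness for small extensions then delivers $A$-flatness of $f_*\sO_{\mathcal X}$, the identification of the central fibre, and base change all at once. This is essentially Wahl's original affine argument globalized, and it is elementary (long exact sequences plus the local flatness criterion). The paper instead treats an arbitrary Artinian $A$ in a single step inside the derived category: after reducing to $Y$ affine it proves $\rr f_*\sO_{\mathcal X}\simeq\sO_{\mathcal Y}$ by a push--pull computation combined with a Nakayama argument applied to the top nonvanishing $R^ig_*\sO_{\mathcal X}$, and then deduces flatness of $h:\mathcal Y\to{\rm Spec}\,A$ from the projection formula, since $\rl h^*\sF\simeq\rr f_*(g^*\sF)$ is concentrated in non-negative degrees. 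Your induction buys transparency, an explicit base-change isomorphism (which is in any case needed for the functoriality of $f'$, a point the paper leaves implicit), and the observation that $(Y,f_*\sO_{\mathcal X})$ really is a scheme because it is an iterated square-zero extension of $Y$ --- something the paper glosses over. The paper's derived-category computation buys uniformity in $A$ and sidesteps the one piece of bookkeeping your route requires, namely identifying the injection $I\otimes_k\sO_Y\to f_*\sO_{\mathcal X}$ coming from your exact sequence with the multiplication map that the flatness criterion actually refers to (equivalently, identifying $f_*\bar\sO_{\mathcal X}$ with $f_*\sO_{\mathcal X}\otimes_A\bar A$); this is standard but should be said. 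Both arguments invoke the full strength of $\rr f_*\sO_X\simeq\sO_Y$, rather than just $f_*\sO_X\simeq\sO_Y$, at exactly the same point.
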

\begin{proof}
Let $A$ be a local Artinian $k$-algebra as in the statement and let
\begin{eqnarray}\label{defg}
\centerline{ \xymatrix@=32pt{
X \ar[r] \ar[d]^{\bar g} & \mathcal{X} \ar[d]^{g}  \\
{\rm Spec}\,k \ar[r]^{\alpha} & {\rm Spec}\,A}} 
 \noindent
 \end{eqnarray}
be a deformation of $X$ over $A$. Moreover denote by $\bar h:Y\rightarrow {\rm Spec}\,k$ the 
structure morphism of $Y$. 
Then $\bar g=\bar h\circ f$ and $\mathcal{Y}$ admits a morphism $h$ to ${\rm Spec}\,A$ determined by the following morphism of $k$-algebras
$$A=\sO_{{\rm Spec}\,A}\longrightarrow \bar g_*\sO_{\mathcal X}=\bar h_*f_*\sO_{\mathcal X}=\bar h_*\sO_{\mathcal Y}.$$
We only need to prove that $h$ is a flat morphism. 
To this end we can suppose that $Y$ is affine.

\begin{claim}
 There is an isomorphism of complexes $\rr f_*\sO_{\mathcal X}\simeq \sO_{\mathcal{Y}}$.
\end{claim}
\begin{proof}
By the construction of $\mathcal{Y}$ we have $f_*\sO_{\mathcal{X}}\simeq \sO_{\mathcal{Y}}$, therefore we only need to prove the vanishings
$R^if_*\sO_{\mathcal{X}}=0$ for $i>0$.

Since $\rr f_*\sO_X\simeq \sO_Y$ we have $H^i(X,\sO_X)=0$ for all $i>0$.
This easily follows by taking cohomology from the following chain of isomorphisms (and from the fact that we are assuming $Y$ affine)
$$\rr \Gamma (X,\sO_X)\simeq \rr \Gamma (Y, \rr f_*\sO_X) \simeq \Gamma(Y,\rr f_*\sO_X).$$ 
Now as $g$ is a flat morphism we can apply the push-pull formula of \cite[Lemma 2.22 and Corollary 2.23]{Ku} to \eqref{defg} to have 
$$\rl \alpha^* \rr g_* \sO_{\mathcal{X}} \simeq \rr \Gamma (X,\sO_X).$$
Let $i_0:={\rm max}\, \{i\, | \, R^i g_*\sO_{\mathcal{X}}\neq 0\}$. If by contradiction $i_0>0$, then we would have 
$$0=L^{i_0}\alpha^*\rr g_*\sO_X \simeq \alpha^* R^{i_0}g_*\sO_{\mathcal X},$$ and therefore by Nakayama's lemma we would get the contradiction 
$R^{i_0} g_*\sO_{\mathcal{X}}=0$. We conclude that $i_0=0$ and moreover that 
$$\rr g_*\sO_{\mathcal X}\simeq \rr h_* \rr f_* \sO_{\mathcal{X}}\simeq \rr \Gamma (Y,\rr f_*\sO_{\mathcal{X}})\simeq \Gamma 
(Y,\rr f_* \sO_{\mathcal{X}}).$$
From this we see that for any $i>0$ we have  
$$0=R^i g_* \sO_{\mathcal{X}}\simeq \Gamma(Y,R^if_*\sO_{\mathcal{X}}).$$
Moreover, since the functor of global sections is exact on affine spaces, we finally get $R^i f_*\sO_{\mathcal{X}}=0$ for all $i>0$.
\end{proof}

In order to show that $h$ is flat, we will prove that for any coherent sheaf $\sF$ on ${\rm Spec}\,A$ we have $L^i h^*\sF=0$ for all $i<0$. 
But this follows from the projection formula (\cite[Proposition 5.6]{Ha1}) as the following chain of isomorphisms yields
\begin{eqnarray*}
 \rl h^*\sF & \simeq & \sO_{\mathcal{Y}} \stackrel{\rl}{\otimes} \rl h^*\sF\\
& \simeq & \rr f_*\sO_{\mathcal{X}}\stackrel{\rl}{\otimes} \rl h^*\sF\\
& \simeq & \rr f_*(\rl f^* \rl h^* \sF)\\
& \simeq & \rr f_* \rl g^*\sF\\
& \simeq & \rr f_*(g^*\sF).\\
\end{eqnarray*}
We conclude that $L^i h^* \sF=0$ for all $i<0$ since the derived push-forward of a sheaf lives in non-negative degrees.
\end{proof}

\begin{prop}\label{bdtanget}
The differential ${\rm d}f'$ to $f'$ in \eqref{blowingdown} can be described as the composition
  \begin{gather}\label{differential}
{\rm Ext}^1_{\sO_X}(\Omega_X,\sO_X)\longrightarrow {\rm Ext}^1_{\sO_X}(\mathbf{L}f^* \Omega_Y,\sO_X)\simeq {\rm Ext}^1_{\sO_Y}(\Omega_Y,\sO_Y)
\end{gather}
where the first map is obtained by applying the functor ${\rm Ext}_{\sO_X}^1(-,\sO_X)$ to 
the morphism $\bar \delta_1:\mathbf{L}f^* \Omega_Y\rightarrow \Omega_X$, and
the second by adjunction formula \cite[Corollary 5.11]{Ha1}. 
Moreover, if $f$ is birational and the exceptional locus of $f$ in $X$ has codimension at least $3$, then ${\rm d}f'$ is an isomorphism.
\end{prop}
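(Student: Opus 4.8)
The plan is to prove the two assertions in turn: first that ${\rm d}f'$ equals the displayed composition, then that this composition is an isomorphism under the birationality and codimension hypotheses.

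\emph{Identification of the differential.} I would begin with a first-order deformation $\mathcal X$ of $X$ over $k[t]/(t^2)$, with class $e_X\in T_X^1$; then $f'(\mathcal X)=\mathcal Y:=(Y,f_*\sO_{\mathcal X})$ has class $e_Y={\rm d}f'(e_X)\in T_Y^1$. The Claim in the proof of Proposition \ref{bd} gives $\rr f_*\sO_{\mathcal X}\simeq\sO_{\mathcal Y}$, so in particular $f_*\sO_{\mathcal X}=\sO_{\mathcal Y}$ and the counit of adjunction yields a canonical $A$-morphism $\tilde f:\mathcal X\to\mathcal Y$ lifting $f$; by Proposition \ref{tangentspacedef} the resulting triple $(e_X,e_Y,\gamma)$ is an element of $T_f^1$. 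Note also that $\rr f_*\sO_X\simeq\sO_Y$ forces $f$ to be surjective, so $f(X)=Y$ is not contained in the singular locus of $Y$ and Proposition \ref{exactprop} applies. Its proof shows that the composite $T_f^1\to T_X^1\oplus T_Y^1\stackrel{\lambda_2}{\to}{\rm Ext}^1_{\sO_X}(\rl f^*\Omega_Y,\sO_X)$ vanishes, whence $\lambda_2^1(e_X)=\lambda_2^2(e_Y)$. Writing $e_X,e_Y$ as maps $\alpha:\Omega_X\to\sO_X[1]$ and $\beta:\Omega_Y\to\sO_Y[1]$, the left-hand side is $\alpha\circ\bar\delta_1={\rm Ext}^1_{\sO_X}(\bar\delta_1,\sO_X)(e_X)$, i.e. the first map of the statement applied to $e_X$; since $\rr f_*\sO_X\simeq\sO_Y$ makes $\bar\delta_0$ an isomorphism, the right-hand side ${\rm ad}(\bar\delta_0[1]\circ\beta)$ is the image of $e_Y$ under the inverse of the adjunction isomorphism of \cite[Corollary 5.11]{Ha1}. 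Reading off the equality gives $e_Y={\rm d}f'(e_X)$ equal to the claimed composition.

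\emph{Reduction of the isomorphism claim.} The adjunction isomorphism ${\rm Ext}^1_{\sO_X}(\rl f^*\Omega_Y,\sO_X)\simeq{\rm Ext}^1_{\sO_Y}(\Omega_Y,\rr f_*\sO_X)={\rm Ext}^1_{\sO_Y}(\Omega_Y,\sO_Y)$ shows unconditionally that the second map is an isomorphism, so it remains to prove that the first map ${\rm Ext}^1_{\sO_X}(\bar\delta_1,\sO_X)$ is one. Completing $\bar\delta_1$ to a distinguished triangle $\rl f^*\Omega_Y\stackrel{\bar\delta_1}{\to}\Omega_X\to P\to(\rl f^*\Omega_Y)[1]$ and applying $\rr{\rm Hom}_{\sO_X}(-,\sO_X)$, the long exact sequence shows that the first map is an isomorphism as soon as ${\rm Ext}^1_{\sO_X}(P,\sO_X)={\rm Ext}^2_{\sO_X}(P,\sO_X)=0$.

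\emph{The main obstacle: the Ext-vanishing.} This is where the hypotheses enter, and the step I expect to be most delicate. Since $f$ is birational, on the complement of the exceptional locus $E$ it is an isomorphism, so there $\bar\delta_1=\delta_1\circ\zeta$ is an isomorphism ($f$ is flat, hence $\zeta$ is an isomorphism, and $\delta_1$ is an isomorphism); consequently $P$ is supported on $E$, a closed set of codimension at least three, and so is every cohomology sheaf $\mathcal{H}^{-q}(P)$ (with $q\ge 0$, as $P\in D^{\le 0}$). Feeding this into the hyper-Ext spectral sequence $E_2^{p,q}={\rm Ext}^p_{\sO_X}(\mathcal{H}^{-q}(P),\sO_X)\Rightarrow{\rm Ext}^{p+q}_{\sO_X}(P,\sO_X)$ and using that ${\rm Ext}^p_{\sO_X}(\mathcal G,\sO_X)=0$ whenever $p<{\rm codim}\,{\rm supp}\,\mathcal G$ — valid on a regular $X$, which in our application is $X=C_d$, or more generally on a Cohen--Macaulay $X$ via Grothendieck duality — every term with $p\le 2$ vanishes. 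As $q\ge 0$, total degrees $1$ and $2$ receive only such contributions, which yields the required vanishing and hence the isomorphism. The two points I would watch are precisely this regularity input, needed to convert codimension of support into Ext-vanishing, and the behaviour of the spectral sequence should $\rl f^*\Omega_Y$ fail to be bounded below — harmless here, since only the low total degrees, which are insensitive to unboundedness below, are used.
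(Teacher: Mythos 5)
Your proposal is correct and follows essentially the same route as the paper: the identification of ${\rm d}f'$ via the lift $\tilde f:\mathcal X\to\mathcal Y$ and the commuting diagram of distinguished triangles (you merely repackage this by citing the exactness statement of Proposition \ref{exactprop}, whose proof is that very diagram chase, together with the observation that $\bar\delta_0$ is an isomorphism), and the isomorphism claim via the cone of $\bar\delta_1$, its support on the exceptional locus, and the hyper-Ext spectral sequence. Your explicit flagging of the regularity (or Cohen--Macaulay) input needed to convert codimension of support into the vanishing ${\rm Ext}^{\le 2}(\mathcal H^i(P),\sO_X)=0$ is a point the paper leaves implicit, and is a worthwhile precision.
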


\begin{proof}
We refer to \cite{Wa} for the description of ${\rm d}f'$ in the affine case. In the global case this is obtained as follows; we continue
using notation of Proposition \ref{bd} and its proof.
Let $\mathcal{X}$ be a first-order deformation of $X$ and let $\mathcal{Y}= (Y,f_*\sO_{\mathcal{X}})$ 
be the deformation determined by $f'$.
As $f_*\sO_{\mathcal{X}} = \sO_{\mathcal{Y}}$, we get 
a morphism $\widetilde{f}:\mathcal{X}\rightarrow \mathcal{Y}$ such that the top square of the diagram 
\eqref{definition} commutes. Therefore, as shown in the proof of Proposition \ref{tangentspacedef}, the right square of \eqref{diagrT1a} commutes 
and 
hence, under the morphism 
${\rm Ext}_{\sO_X}^1(\Omega_{X},\sO_{X})\stackrel{\bar \delta_1^*}{\longrightarrow} {\rm Ext}_{\sO_X}^1(\rl f^* \Omega_Y,\sO_X)$,
the distinguished triangle 
$$\sO_X\longrightarrow \Omega_{\mathcal{X}|X}\longrightarrow \Omega_X\longrightarrow \sO_X[1],$$ which determines the deformation 
$\mathcal{X}$,
is sent to the triangle 
$$\sO_X\longrightarrow \rl f^*\Omega_{\mathcal{Y}|Y}\longrightarrow \rl f^*\Omega_{Y}\longrightarrow \sO_X[1].$$ 
Consequently, via adjunction, this last triangle is sent to the exact sequence 
$0\rightarrow \sO_Y\rightarrow \Omega_{\mathcal{Y}|Y}\rightarrow \Omega_Y\rightarrow 0$ which is the sequence
determining the first-order deformation $\mathcal{Y}$ of $Y$. Therefore the morphism defined in \eqref{differential} takes $\mathcal{X}$ to 
$\mathcal{Y}$ which is what we needed to show.

For the second statement we complete $\bar \delta_1$ to a distinguished triangle:
$$\rl f^* \Omega_Y\longrightarrow \Omega_X \longrightarrow Q \longrightarrow \rl f^*\Omega_Y[1].$$ We note that since $f$ is an isomorphism
outside the exceptional locus, the supports of the cohomology sheaves $H^i(Q)$ of $Q$ have codimension $\geq 3$. Moreover, $H^i(Q)=0$ for $i\geq 1$ 
as $\rl f^*\Omega_Y$ lives in non-positive degrees. 
Then ${\rm Ext}_{\sO_X}^j(H^i(Q),\sO_X)=0$ for all $j\leq 2$ and all $i$, and therefore the spectral sequence
$E_2^{j,i}={\rm Ext}_{\sO_X}^j(H^{-i}(Q),\sO_X) \Rightarrow {\rm Ext}_{\sO_X}^{j+i}(Q,\sO_X)$ yields the vanishings $${\rm ker}\,{\rm d}f' \simeq
{\rm Ext}_{\sO_X}^1(Q,\sO_X)=0\quad \quad  \mbox{ and } \quad \quad {\rm coker}\, {\rm d}f' \simeq 
{\rm Ext}_{\sO_X}^2(Q,\sO_X)=0.$$
\end{proof}

The previous proposition leads to a criterion for the blowing-down morphism to be an isomorphism. This proves Criterion \ref{cri} of the 
Introduction.

\begin{cor}\label{corbd}
 Let $f:X\rightarrow Y$ be a birational morphism of integral projective $k$-schemes such that 
 the exceptional locus of $f$ is of codimension at least 
 three in $X$ and $\rr f_*\sO_X\simeq \sO_Y$. If $X$ is nonsingular, unobstructed (\emph{e.g.} $h^2(X,T_X)=0$), 
 and $h^0(X,T_X)=0$, then the blowing-down morphism $f':{\rm Def}_X\rightarrow {\rm Def}_Y$ is an isomorphism. 
\end{cor}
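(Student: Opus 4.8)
The plan is to deduce Corollary \ref{corbd} directly from Proposition \ref{bdtanget} together with the standard machinery of functors of Artin rings, so that the argument is essentially formal. Recall that a morphism of functors of Artin rings satisfying Schlessinger's conditions is an isomorphism as soon as (i) it induces an isomorphism on tangent spaces, (ii) the source functor is smooth (unobstructed), and (iii) the source functor is injective on tangent spaces and the target functor is at least prorepresentable/has a good deformation theory. The cleanest route is to verify that $f'$ is both injective and surjective by a standard smoothness-plus-tangent-isomorphism argument, so first I would isolate exactly which hypotheses feed into which part of the conclusion.

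First I would observe that the differential ${\rm d}f'$ is an isomorphism: this is precisely the second statement of Proposition \ref{bdtanget}, whose hypotheses (birationality of $f$ and codimension $\geq 3$ for the exceptional locus) are among those assumed here. This handles the tangent level. Next I would use that $X$ is nonsingular and unobstructed to conclude that ${\rm Def}_X$ is a smooth functor of Artin rings (for a smooth projective variety $T^2_X={\rm Ext}^2(\Omega_X,\sO_X)=H^2(X,T_X)$ is an obstruction space, so its vanishing forces smoothness; more generally unobstructedness is assumed outright). The condition $h^0(X,T_X)=0$ ensures that $X$ has no infinitesimal automorphisms, which is what guarantees that ${\rm Def}_X$ is prorepresentable rather than merely having a hull, and—crucially—it is the input that rigidifies the correspondence so that $f'$ can be inverted.

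The core of the argument is then to run the smoothness criterion for morphisms of deformation functors. Since ${\rm d}f'$ is injective and ${\rm Def}_X$ is smooth, I would show $f'$ is injective on all Artin rings by the usual inductive lifting argument over small extensions $0\to (t)\to A'\to A\to 0$: two deformations over $A'$ restricting to the same deformation over $A$ and mapping to the same $\mathcal{Y}'$ differ by an element of $H^1(X,T_X)\cong T^1_X$ lying in the kernel of ${\rm d}f'$, hence are equal. For surjectivity, given a deformation $\mathcal{Y}'$ of $Y$ over $A'$ lifting one that is already in the image over $A$, I would use smoothness of ${\rm Def}_X$ to lift the $A$-deformation of $X$ to some $\widetilde{\mathcal{X}}$ over $A'$; its image $f'(\widetilde{\mathcal{X}})$ and $\mathcal{Y}'$ then agree over $A$, so they differ by a class in $T^1_Y$, and surjectivity of ${\rm d}f'$ lets me adjust the lift of $\mathcal{X}$ to hit $\mathcal{Y}'$ exactly. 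Both steps are the standard five-lemma/obstruction-chase on the small extension.

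The main obstacle I anticipate is purely bookkeeping rather than conceptual: one must make sure that the deformation $\mathcal{Y}=(Y,f_*\sO_{\mathcal{X}})$ produced by $f'$ really does transform compatibly with small extensions, i.e. that $f'$ commutes with restriction along $A'\to A$ and that the torsor structures on the fibers of ${\rm Def}_X\to {\rm Def}_X(A)$ and ${\rm Def}_Y\to {\rm Def}_Y(A)$ are matched by ${\rm d}f'$ in the sense needed for the lifting argument. This compatibility is what makes the abstract criterion applicable, and it follows from the explicit description of ${\rm d}f'$ in Proposition \ref{bdtanget} (in particular from the fact that $f'$ is computed by pushing forward, so it is linear over the base in the appropriate sense); once this is granted, the inductive argument closes without difficulty and yields that $f'$ is an isomorphism of functors.
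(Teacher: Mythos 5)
Your overall strategy is the same as the paper's: establish that ${\rm d}f'$ is an isomorphism via Proposition \ref{bdtanget}, note that ${\rm Def}_X$ is smooth and prorepresentable, and then invoke the standard criterion (the paper cites \cite[Remark 2.3.8]{Se}, whose proof is exactly the small-extension chase you write out by hand). However, there is one concrete missing step: you never verify that ${\rm Def}_Y$ is prorepresentable, equivalently (given that the Schlessinger conditions hold for reduced projective schemes) that $H^0(Y,T_Y)=0$. You flag this requirement yourself in your condition (iii) and then only check the analogous statement for $X$. The point is not cosmetic: your injectivity argument runs ``$\xi_2=\xi_1+v$ with $v\in T^1_X$ and $f'(\xi_1)=f'(\xi_2)$, hence ${\rm d}f'(v)=0$, hence $v=0$,'' and the middle implication uses that the action of $T^1_Y={\rm Ext}^1_{\sO_Y}(\Omega_Y,\sO_Y)$ on the set of liftings in ${\rm Def}_Y$ over a small extension is \emph{free}, not merely transitive. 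Transitivity comes for free from Schlessinger's conditions; freeness is exactly prorepresentability and fails in general when $Y$ admits infinitesimal automorphisms. Since $Y$ is singular, this cannot be read off from the hypotheses on $X$ without an extra argument.

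The paper closes this gap by observing that $f$ is a small resolution (here the codimension hypothesis on the exceptional locus is used a second time, beyond its role in Proposition \ref{bdtanget}), so that $f_*T_X\simeq T_Y$ by \cite[Lemma 21]{SV}, whence $H^0(Y,T_Y)\simeq H^0(X,T_X)=0$ and ${\rm Def}_Y$ is prorepresentable by \cite[Corollary 2.6.4]{Se}. Once you insert this step, your argument is complete and coincides with the paper's; your concluding worry about compatibility of $f'$ with restriction along $A'\to A$ and with the torsor structures is indeed only bookkeeping, settled by the explicit description of ${\rm d}f'$ in Proposition \ref{bdtanget}.
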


\begin{proof}
 By \cite[Corollary 2.6.4 and Corollary 2.4.7]{Se} the functor ${\rm Def}_X$ is prorepresentable 
 and smooth. Moreover, as $f$ is a small resolution, there is an isomorphism $f_*T_X\simeq T_Y$ whose proof can be found in 
 \cite[Lemma 21]{SV}.
 Then $H^0(Y,T_Y)\simeq H^0(X,T_X)=0$ and ${\rm Def}_Y$ is prorepresentable as well.
 At this point the corollary is a consequence of Proposition \ref{bdtanget} and the following criterion \cite[Remark 2.3.8]{Se}: if 
 $\gamma:G\rightarrow G'$ is a morphism of functors of Artin rings such that the differential 
 ${\rm d}\gamma$ is an isomorphism and both $G$ and $G'$ are prorepresentable with $G$
 smooth, then $\gamma$ is an isomorphism and $G'$ is smooth as well.
\end{proof}

In a similar fashion, we also show that it is possible to blow-down deformations of morphisms.

\begin{prop}\label{bdmor}
 Let $f:X\rightarrow Y$ be a resolution of singularities of 
a projective integral normal $k$-scheme $Y$ such that $\rr f_*\sO_X\simeq \sO_Y$. 
Moreover fix a smooth projective variety $Z$ together with two morphisms $f_1:X\rightarrow Z$ and $f_2:Y\rightarrow Z$ such that 
$f_1 = f_2 \circ f$.
Then $f$ defines a morphism of functors
$F:{\rm Def}_{f_1}\rightarrow {\rm Def}_{f_2}$ such that the following diagram 
\begin{equation}\label{commudiagrbdmor}
\centerline{ \xymatrix@=32pt{
 {\rm Def}_{f_1} \ar[r]^{p}\ar[d]^{F} & {\rm Def}_{X}\ar[d]^{f'}   \\
 {\rm Def}_{f_2} \ar[r]^{p'} & {\rm Def}_{Y}}} 
 \noindent
\end{equation}
commutes where $p$ and $p'$ are forgetful morphisms and $f'$ is the blowing-down morphism defined in Proposition \ref{bd}.
\end{prop}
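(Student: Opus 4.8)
The plan is to send a deformation of $f_1$ over $A$, given by a triple $(\mathcal{X},\mathcal{Z},\tilde f_1)$ with $\tilde f_1:\mathcal{X}\to\mathcal{Z}$ restricting to $f_1$, to the triple $(\mathcal{Y},\mathcal{Z},\tilde f_2)$, where $\mathcal{Y}:=(Y,f_*\sO_{\mathcal X})=f'(\mathcal X)$ is the blown-down deformation produced by Proposition \ref{bd} (and hence flat over $A$), and $\tilde f_2:\mathcal{Y}\to\mathcal{Z}$ is a morphism, to be constructed, restricting to $f_2$. Keeping the deformation $\mathcal Z$ of $Z$ unchanged makes the commutativity of \eqref{commudiagrbdmor} essentially tautological: the forgetful morphism $p$ sends a deformation of $f_1$ to $\mathcal X$, and then $f'$ sends $\mathcal X$ to $\mathcal Y$, whereas $p'\circ F$ forgets $(\mathcal Y,\mathcal Z,\tilde f_2)$ to the same $\mathcal Y$. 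So the real content is the construction of $\tilde f_2$ and the verification that $F$ is a well-defined morphism of functors.

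To build $\tilde f_2$, I first recall from the proof of Proposition \ref{bdtanget} that, as $f_*\sO_{\mathcal X}=\sO_{\mathcal Y}$, there is a canonical morphism $\tilde f:\mathcal X\to\mathcal Y$ over ${\rm Spec}\,A$ deforming $f$, whose underlying continuous map is that of $f$ and whose comorphism $\sO_{\mathcal Y}\to \tilde f_*\sO_{\mathcal X}$ is the identity under the identification $\tilde f_*\sO_{\mathcal X}=f_*\sO_{\mathcal X}=\sO_{\mathcal Y}$. Since deformations over the Artinian ring $A$ do not change underlying topological spaces, the factorization $f_1=f_2\circ f$ gives $\tilde f_{1*}=f_{2*}\circ f_*$ as pushforwards of sheaves, so the comorphism of $\tilde f_1$ reads
\[
\tilde f_1^{\#}:\sO_{\mathcal Z}\longrightarrow \tilde f_{1*}\sO_{\mathcal X}=f_{2*}\big(f_*\sO_{\mathcal X}\big)=f_{2*}\sO_{\mathcal Y}.
\]
I then declare $\tilde f_2$ to be the morphism of ringed spaces with underlying continuous map $f_2$ and comorphism $\tilde f_1^{\#}$. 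This is a morphism of $A$-schemes: $\tilde f_1^{\#}$ is an $A$-algebra map between flat $A$-thickenings of $\sO_Z$ and $f_{2*}\sO_Y$ that reduces modulo the maximal ideal of $A$ to the comorphism of $f_2$, and the induced maps on stalks are local since the maximal ideals of the stalks are the preimages of the closed points (locality is inherited from $f_2$). A comparison of comorphisms, using that the comorphism of $\tilde f$ is the identity, gives $(\tilde f_2\circ \tilde f)^{\#}=\tilde f_2^{\#}=\tilde f_1^{\#}$, hence $\tilde f_2\circ \tilde f=\tilde f_1$; and reducing $\tilde f_1^{\#}$ modulo the maximal ideal of $A$ recovers $f_1^{\#}=(f_2\circ f)^{\#}$, so $\tilde f_2$ restricts to $f_2$ on the closed fibre. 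Thus $(\mathcal Y,\mathcal Z,\tilde f_2)$ is a genuine deformation of $f_2$.

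It remains to check that $F$ respects the functor-of-Artin-rings structure. Compatibility with base change along a morphism $A\to A'$ follows from the fact that the blowing-down construction $\mathcal X\mapsto(Y,f_*\sO_{\mathcal X})$ commutes with base change — a consequence of $\rr f_*\sO_X\simeq\sO_Y$, flatness over $A$, and cohomology and base change, already invoked in Proposition \ref{bd} — while $\mathcal Z$ and the comorphism $\tilde f_1^{\#}$ base-change compatibly. That $F$ descends to isomorphism classes follows because an isomorphism of deformations of $f_1$ consists of $A$-isomorphisms $\varphi:\mathcal X\to\mathcal X'$ and $\psi:\mathcal Z\to\mathcal Z'$ compatible with $\tilde f_1,\tilde f_1'$; the functoriality of $f'$ turns $\varphi$ into an $A$-isomorphism $\mathcal Y\to\mathcal Y'$, and one checks that this isomorphism together with $\psi$ is compatible with $\tilde f_2,\tilde f_2'$, using once more that the comorphism of $\tilde f$ is the identity.

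The step I expect to require the most care is the construction of $\tilde f_2$, namely the factorization of $\tilde f_1$ through $\tilde f$. The place where the hypotheses genuinely enter is the identity $\tilde f_*\sO_{\mathcal X}=\sO_{\mathcal Y}$ (equivalently $f_*\sO_{\mathcal X}=\sO_{\mathcal Y}$), which is exactly what makes the comorphism of $\tilde f$ an isomorphism and thereby lets every morphism out of $\mathcal X$ that is constant on the fibres of $f$ descend uniquely to $\mathcal Y$. This is a relative, scheme-theoretic incarnation of the rigidity lemma, and checking it over the non-reduced base $A$ — rather than merely set-theoretically on $Z$ — is the only delicate bookkeeping.
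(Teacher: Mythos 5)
Your proposal is correct and follows essentially the same route as the paper: blow down $\mathcal X$ to $\mathcal Y=(Y,f_*\sO_{\mathcal X})$, keep $\mathcal Z$ fixed, and descend $\widetilde f_1$ to $\widetilde f_2$ using $f_*\sO_{\mathcal X}\simeq\sO_{\mathcal Y}$ (the paper phrases this via adjunction, pushing forward $f_1^*\sO_{\mathcal Z}\to\sO_{\mathcal X}$, rather than via comorphisms on the fixed underlying space, but these are the same construction). The only cosmetic difference is that the paper verifies flatness of $s\circ\widetilde f_2$ by a direct derived projection-formula computation, whereas you deduce it from Proposition \ref{bd} together with the $A$-linearity of the comorphism; both are adequate.
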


\begin{proof}
By definition, a deformation $\widetilde{f}_1:\mathcal{X}\rightarrow \mathcal{Z}$ of $f_1$ over a local Artinian $k$-algebra $A$ with
residue field $k$ determines a deformation $\mathcal{X}$ of $X$ and a deformation 
$s:\mathcal{Z}\rightarrow {\rm Spec}\, A$ of $Z$. 
Furthermore, by Proposition \ref{bd}, $\mathcal{X}$ defines a deformation 
$\mathcal{Y}=(Y,f_*\sO_{\mathcal{X}})$ of $Y$.
As $\widetilde{f}_1$ 
determines a morphism of sheaves of 
$k$-algebras $f^*f_2^*\sO_{\mathcal{Z}}=f^*_1\sO_{\mathcal{Z}}\rightarrow \sO_{\mathcal{X}}$, by applying $f_*$ we get 
a morphism of sheaves of $k$-algebras 
$f_2^*\sO_{\mathcal{Z}}\rightarrow f_*\sO_{\mathcal{X}}\simeq \sO_{\mathcal{Y}}$, which in turn defines
a morphism $\widetilde{f}_2:\mathcal{Y}\rightarrow \mathcal{Z}$.
 
At this point in order to check that $\widetilde{f}_2$ is a deformation of $f_2$ we 
only need to show that the composition $\mathcal{Y}\stackrel{\widetilde{f}_2}{\rightarrow} \mathcal{Z}\stackrel{s}{\rightarrow} {\rm Spec}\, A$ 
is flat. 
This is equivalent to proving that for any coherent sheaf $\sF$ on ${\rm Spec}\,A$ the higher cohomology of $\rl (s\circ \widetilde{f}_2)^*\sF$ 
vanish.
But since $s\circ \widetilde{f}_1$ is flat, by projection formula we have that for any index $i<0$ 
(we use the symbol $H^i$ to denote the $i$-th cohomology of a complex):

\begin{eqnarray*}
 H^i(\rl(s\circ \widetilde{f}_2)^* \sF) & \simeq & H^i(\rl \widetilde{f}_2^* \rl s^* \sF)\\
& \simeq & H^i(\sO_{\mathcal{Y}}\stackrel{\rl}{\otimes}\rl \widetilde{f}_2^* \rl s^*\sF)\\
& \simeq & H^i(\rr f_{*}\sO_{\mathcal{X}}\stackrel{\rl}{\otimes} \rl \widetilde{f}_2^* \rl s^* \sF)\\
& \simeq & H^i(\rr f_{*}(\rl \widetilde{f}_1^* \rl s^*\sF))\\
& \simeq & H^i(\rr f_*(s\circ \widetilde{f}_1)^*\sF)=0.
\end{eqnarray*}

The commutativity of \eqref{commudiagrbdmor} follows from the definitions of all involved morphisms and functors.
\end{proof}

\section{Deformations of $W_d(C)$}\label{secdefWd}
  
Let $C$ be a complex smooth curve of genus $g\geq 3$. We denote by 
$$W_d(C)=\big\{[L]\in {\rm Pic}^d(C)\,|\, h^0(C,L)>0 \big\}$$ the Brill--Noether loci parameterizing
degree $d$ line bundles on $C$ having at least one non-zero global section. 
We recall that it is possible to put a scheme structure on $W_d(C)$ by means of Fitting ideals so that
$W_d(C)$ is an irreducible, normal, Cohen--Macaulay scheme of dimension $d$ (\cite[Corollary 4.5]{ACGH}). 
A resolution of singularities of $W_d(C)$ is provided by an Abel--Jacobi map 
\begin{gather*}
u_d:C_d\longrightarrow W_d(C),\quad  \quad P_1+\ldots +P_d\mapsto \sO_C \big(P_1+\ldots +P_d \big)
\end{gather*}
where $C_d$ is the $d$-fold symmetric product of $C$.
Note that a fiber of $u_d$ over
a point $[L]\in W_d(C)$ is nothing else than the linear series $|L|$ associated to $L$. 
Finally, by fundamental results of Kempf (\cite{Ke1}),
we have that $W_d(C)$ has at most rational singularities so that the following isomorphisms hold:
\begin{gather}\label{ratsing}
\rr u_{d*}\sO_{C_d}\simeq \sO_{W_d(C)} \quad \mbox{ and } \quad \rr u_{d*}\omega_{C_d}\simeq \omega_{W_d(C)}.
\end{gather}
By Proposition \ref{bd} there is then a well-defined blowing-down morphism  
$$u_d':{\rm Def}_{C_d}\longrightarrow {\rm Def}_{W_d(C)}.$$

The goal of this section is to prove the following:
\begin{theorem}\label{defWd}
 If $C$ is a smooth non-hyperelliptic curve of genus $g\geq 3$, 
 then the blowing-down morphism $u_d':{\rm Def}_{C_d}\rightarrow {\rm Def}_{W_d(C)}$ is an isomorphism of functors for all $1\leq d< g-1$.
\end{theorem}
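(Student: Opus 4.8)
The plan is to prove that the blowing-down morphism $u_d'$ is an isomorphism of functors by combining the two criteria developed in \S\ref{secbdm}. The key structural fact is that $u_d:C_d\rightarrow W_d(C)$ is a birational morphism between projective integral schemes satisfying $\rr u_{d*}\sO_{C_d}\simeq \sO_{W_d(C)}$ (this is \eqref{ratsing}), that $C_d$ is smooth and unobstructed, and that $h^0(C_d,T_{C_d})=0$ for $g\geq 3$ (the latter being essentially the content of the Kempf--Fantechi results quoted in the Introduction, equivalent to ${\rm Def}_{C_d}\simeq{\rm Def}_C$). When the exceptional locus of $u_d$ has codimension at least three in $C_d$, Criterion \ref{cri} (i.e. Corollary \ref{corbd}) applies verbatim and gives the isomorphism immediately. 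So the first step is to stratify the problem according to the codimension of the exceptional locus of $u_d$, reducing to the one genuinely hard case.

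The fibers of $u_d$ over $[L]\in W_d(C)$ are the complete linear series $|L|$, so the exceptional locus consists of those divisor classes with $h^0(C,L)\geq 2$, namely $W^1_d(C)$. By the Brill--Noether theory recalled via \cite{ACGH}, the expected dimension of $W^1_d$ is $\rho=g-2(g-d+1)=3d-2g+2$, and the generic fiber over $W^1_d$ is a $\rp^1$. Hence the codimension of the exceptional locus in $C_d$ is governed by $g-d$, and since we assume $C$ non-hyperelliptic the codimension-one case is excluded. The first step therefore isolates $d=g-2$ as the single remaining case where the exceptional locus has codimension exactly two, and here Criterion \ref{cri} is unavailable.

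For this hard case the plan, following the roadmap sketched in the Introduction, is to work directly at the level of the differential ${\rm d}u_d'$ and prove it is an isomorphism by hand. Concretely, I would complete $\bar\delta_1:\rl u_d^*\Omega_{W_d}\rightarrow\Omega_{C_d}$ (see \eqref{deltabar}) to a distinguished triangle with cone $P$, so that, exactly as in the proof of Proposition \ref{bdtanget}, $\ker{\rm d}u_d'\simeq{\rm Ext}^1_{\sO_{C_d}}(P,\sO_{C_d})$ and ${\rm coker}\,{\rm d}u_d'\simeq{\rm Ext}^2_{\sO_{C_d}}(P,\sO_{C_d})$. The vanishing of these two Ext-groups is what must be established. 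The expected mechanism, as flagged in the Introduction, is to pass through Grothendieck--Verdier duality on $C_d$ and relate the relevant groups to the higher direct image $R^1f_{d*}(\Omega_{C_d/W_d}\otimes\omega_{C_d})$ along the Abel--Jacobi map $f_d:C_d\rightarrow J$ into the Jacobian; the desired vanishings then follow provided the support of this sheaf has codimension at least five in $J$. This codimension estimate is precisely where Ein's bounds on the Castelnuovo--Mumford regularity of the conormal bundle to the fibers of $f_d$ \cite{Ein} enter, and I expect this codimension/regularity estimate to be the main obstacle of the whole argument.

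Once ${\rm d}u_d'$ is shown to be an isomorphism in every case, the passage from first-order to arbitrary infinitesimal deformations is formal: $C_d$ is unobstructed and ${\rm Def}_{C_d}$ is prorepresentable, so invoking the smoothness criterion \cite[Remark 2.3.8]{Se} (as in the proof of Corollary \ref{corbd}) promotes the isomorphism on tangent spaces to an isomorphism of functors, and simultaneously shows that ${\rm Def}_{W_d(C)}$ is smooth and prorepresentable. I would present the argument in this order because it cleanly separates the soft, general blowing-down formalism from the single arithmetic-geometric input (Ein's regularity estimate) that does the real work in the codimension-two case.
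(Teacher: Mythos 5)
Your overall strategy --- the blowing-down formalism, reduction to the vanishing of ${\rm Ext}^1_{\sO_{C_d}}(P,\sO_{C_d})$ and ${\rm Ext}^2_{\sO_{C_d}}(P,\sO_{C_d})$ for the cone $P$ of $\bar\delta_1$, Grothendieck--Verdier duality down to $J(C)$, Ein's description of the normal bundle of the fibers, and the formal promotion via \cite[Remark 2.3.8]{Se} --- is the paper's. But your reduction step contains a genuine error. You claim the codimension of the exceptional locus of $u_d$ is ``governed by $g-d$'' and that $d=g-2$ is the single case where it equals two. This is false: the exceptional locus is $C^1_d=u_d^{-1}(W^1_d(C))$, whose codimension in $C_d$ is $d-1-\dim W^1_d(C)$ (the generic fibers over $W^1_d(C)\setminus W^2_d(C)$ are $\rp^1$'s), and by Martens' theorem this is at least $2$ for any non-hyperelliptic curve, with equality exactly when $\dim W^1_d(C)=d-3$. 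That happens for special curves at many values of $d$: for a trigonal curve $W^1_3(C)$ is a point, so $u_3$ already has a codimension-two exceptional locus even though $3$ may be far from $g-2$. (Your Brill--Noether number is also miscomputed: $\rho(g,1,d)=g-2(g-d+1)=2d-g-2$, not $3d-2g+2$; for a \emph{general} curve this gives codimension $g-d+1\geq 3$, which is why the hard case is confined to special curves rather than to special values of $d$.) If you run the ad-hoc argument only for $d=g-2$ and Criterion \ref{cri} elsewhere, the proof fails for such curves. The fix is what the paper actually does: no case division at all. The support estimates ${\rm supp}\,R^1u_{d*}(u_d^*\Omega_{W_d(C)}\otimes\omega_{C_d})\subset W^2_d(C)$, ${\rm supp}\,R^1u_{d*}(\Omega_{C_d/W_d(C)}\otimes\omega_{C_d})\subset W^2_d(C)$ and ${\rm supp}\,R^ju_{d*}\sF\subset W^j_d(C)$, combined with Martens' bound $\dim W^j_d(C)\leq d-2j-1$, kill the relevant terms of the duality spectral sequences uniformly for all $1\leq d<g-1$.

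A second, smaller gap: \cite[Remark 2.3.8]{Se} takes prorepresentability of \emph{both} functors as a hypothesis; it does not deliver prorepresentability of ${\rm Def}_{W_d(C)}$ as a conclusion. You must verify $H^0(W_d(C),T_{W_d(C)})=0$ separately, which the paper does via the isomorphism $u_{d*}T_{C_d}\simeq T_{W_d(C)}$ (valid because $u_d$ is a small resolution, i.e.\ again because $C$ is non-hyperelliptic) together with $H^0(C_d,T_{C_d})=0$.
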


The proof of the previous theorem requires a few technical results on the supports of higher-direct image sheaves 
of type $R^1 u_{d*}(u_d^*\Omega_{W_d(C)}\otimes \omega_{C_d})$ and 
$R^1u_{d*}(\Omega_{C_d / W_d(C)}\otimes \omega_{C_d})$.
We will collect these facts in the following subsection and we will show the proof of Theorem \ref{defWd}
in \S\ref{secproof}.

It is worth noticing that Fantechi (\cite{Fa}), by extending previous work of Kempf (\cite{Ke2}), proved the following:

\begin{theorem}[Fantechi]\label{fantechi}
 Let $C$ be a smooth curve of genus $g\geq 2$ and let $d\geq 2$ be an integer. 
 Then the quotient morphism $C^d\rightarrow C_d$ induces an isomorphism of functors of Artin rings ${\rm Def}_{C_d}\simeq {\rm Def}_C$ 
  if and only if $g\geq 3$.
\end{theorem}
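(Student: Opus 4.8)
The plan is to exhibit the isomorphism through the natural comparison morphism
$\Phi:{\rm Def}_C\rightarrow {\rm Def}_{C_d}$ induced by the quotient $q:C^d\rightarrow C_d=C^d/S_d$: it sends a deformation $\mathcal{C}\rightarrow {\rm Spec}\,A$ of $C$ to its relative $d$-fold symmetric product ${\rm Sym}^d_A(\mathcal{C})\rightarrow {\rm Spec}\,A$, which is flat with fibres the symmetric products of the fibres of $\mathcal{C}$ and restricts to $C_d$ over the closed point. Since $C_d$ is smooth, ${\rm Def}_{C_d}$ is governed by the groups $H^i(C_d,T_{C_d})$, and by the smoothness criterion \cite[Remark 2.3.8]{Se} it will suffice, for $g\geq 3$, to prove three things: that the differential ${\rm d}\Phi:H^1(C,T_C)\rightarrow H^1(C_d,T_{C_d})$ is an isomorphism; that $H^0(C_d,T_{C_d})=0$, so that ${\rm Def}_{C_d}$ is prorepresentable; and that $C_d$ is unobstructed. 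Indeed ${\rm Def}_C$ is smooth and prorepresentable of dimension $3g-3$, so the criterion then forces $\Phi$ to be an isomorphism of functors.

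The main computational device is the presentation $q:C^d\rightarrow C_d$. As $q$ is finite and $(q_*\sO_{C^d})^{S_d}=\sO_{C_d}$ in characteristic zero, for every locally free sheaf $\mathcal{F}$ on $C_d$ one has $H^i(C_d,\mathcal{F})\simeq H^i(C^d,q^*\mathcal{F})^{S_d}$; thus I may compute all cohomology of $T_{C_d}$ on the product $C^d$ and then take $S_d$-invariants, an exact operation. The differential of $q$ fits into an $S_d$-equivariant short exact sequence
\[
0\longrightarrow T_{C^d}\stackrel{{\rm d}q}{\longrightarrow} q^*T_{C_d}\longrightarrow \mathcal{Q}\longrightarrow 0,
\]
where a local computation of the Jacobian of $q$ shows that $\mathcal{Q}$ is a torsion sheaf supported on the big diagonal $\delta=\bigcup_{i<j}\{x_i=x_j\}$, restricting along the generic point of each component to a line bundle on the corresponding diagonal $\cong C$. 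Using $T_{C^d}=\bigoplus_j {\rm pr}_j^*T_C$, the Künneth formula together with the vanishings $H^0(C,T_C)=0$ and $H^{\geq 2}(C,\mathcal{F})=0$ yields $H^0(C^d,T_{C^d})^{S_d}=0$ and $H^1(C^d,T_{C^d})^{S_d}\simeq H^1(C,T_C)$, the diagonal copy, which is precisely the image of ${\rm d}\Phi$.

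It then remains to feed these into the long exact sequence of $S_d$-invariants
\[
\cdots\rightarrow H^0(\mathcal{Q})^{S_d}\rightarrow H^1(C,T_C)\stackrel{{\rm d}\Phi}{\rightarrow} H^1(C_d,T_{C_d})\rightarrow H^1(\mathcal{Q})^{S_d}\stackrel{\partial}{\rightarrow} H^2(C^d,T_{C^d})^{S_d}\rightarrow\cdots
\]
and to prove, for $g\geq 3$, that $H^0(\mathcal{Q})^{S_d}=0$ (whence $H^0(C_d,T_{C_d})=0$) and that the connecting map $\partial$ is injective (whence ${\rm d}\Phi$ is an isomorphism), and finally that the analogous invariant sequence in degree two forces the obstruction space of $C_d$ to coincide with that of $C$, so that $C_d$ is unobstructed. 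This diagonal analysis — identifying $\mathcal{Q}$ as an equivariant sheaf on $\delta$ and controlling $H^\bullet(\mathcal{Q})^{S_d}$ together with $\partial$ — is the main obstacle, and it is exactly here that the hypothesis $g\geq 3$ enters: for $g=2$ a one-dimensional piece of $H^1(\mathcal{Q})^{S_d}$ survives in $H^1(C_d,T_{C_d})$.

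To make the exceptional behaviour at $g=2$ transparent, and thereby to settle the \emph{only if} direction, I would use the geometric fact that a genus two curve is hyperelliptic, so that the Abel--Jacobi map $u_d:C_d\rightarrow {\rm Pic}^d(C)$ acquires positive-dimensional fibres; concretely $u_2$ realizes $C_2$ as the blow-up of the abelian surface ${\rm Pic}^2(C)$ at the point $[K_C]$. Consequently $C_d$ inherits the deformations of the abelian surface ${\rm Pic}^d(C)$, whose moduli space has dimension $4>3=3g-3$, so $\Phi$ cannot be surjective and the isomorphism fails. The heart of the argument is therefore the equivariant study of $\mathcal{Q}$ along the diagonal; I expect that establishing the injectivity of $\partial$ for $g\geq 3$ — equivalently, that no invariant first-order deformation of $C_d$ escapes those coming from $C$ — will be the step requiring the most care.
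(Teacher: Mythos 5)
First, a point of comparison: the paper contains no proof of Theorem \ref{fantechi} to measure you against --- it is quoted from Fantechi \cite{Fa}, who extends computations of Kempf \cite{Ke2} --- so your argument must stand on its own, and as written it is an outline whose decisive step is missing. The architecture is sensible and several pieces check out: the relative symmetric product gives a well-defined comparison morphism $\Phi$; the identity $H^i(C_d,\sF)\simeq H^i(C^d,q^*\sF)^{\sigma_d}$ is valid in characteristic zero; K\"unneth with $H^0(C,T_C)=0$ gives $H^0(C^d,T_{C^d})^{\sigma_d}=0$ and $H^1(C^d,T_{C^d})^{\sigma_d}\simeq H^1(C,T_C)$; and \cite[Remark 2.3.8]{Se} is the right closing device. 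Note, however, that this criterion makes your third item superfluous (smoothness of ${\rm Def}_{C_d}$ is a \emph{conclusion} of the criterion, not a hypothesis), which is fortunate, because the way you propose to prove it is wrong: the degree-two invariant sequence does not force the obstruction space of $C_d$ to agree with that of $C$. Indeed $H^2(C_d,T_{C_d})\neq 0$ in general --- for $d=2$ and $g\geq 3$ your own sequence gives $h^2(T_{C_2})=\dim{\rm coker}\,\partial=(3g-5)(g-1)>0$ --- so no comparison of obstruction spaces can yield unobstructedness. Separately, your ``only if'' direction treats only $d=2$: the blow-up description $C_2\simeq {\rm Bl}_{[K_C]}{\rm Pic}^2(C)$ is correct and does give $h^1(T_{C_2})=4>3$, but the theorem asserts failure for \emph{every} $d\geq 2$ when $g=2$; for $d\geq 3$ the Abel--Jacobi map makes $C_d$ a $\rp^{d-2}$-bundle over ${\rm Pic}^d(C)$ and you must still produce the extra deformation direction there.

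The genuine gap is that the injectivity of $\partial:H^1(\mathcal{Q})^{\sigma_d}\rightarrow H^2(C^d,T_{C^d})^{\sigma_d}$ for $g\geq 3$ --- which is the entire content of the theorem --- is deferred rather than proved. For $d=2$ your framework can in fact be completed: one computes $\mathcal{Q}\simeq T_C^{\otimes 2}$ on the diagonal $\delta\simeq C$ with trivial involution action, and, up to Serre duality, the injectivity of $\partial$ becomes the surjectivity of the multiplication map $H^0(C,\omega_C^{\otimes 2})\otimes H^0(C,\omega_C)\rightarrow H^0(C,\omega_C^{\otimes 3})$, which holds for every curve of genus $g\geq 3$ (for non-hyperelliptic curves by Noether's theorem, for hyperelliptic ones by a direct check on $y^2=f(x)$) and has corank exactly one for $g=2$ --- precisely accounting for $h^1(T_{C_2})=4$. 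But for $d\geq 3$ your description of $\mathcal{Q}$ is inadequate: it is a line bundle on each component $\delta_{ij}$ only generically, and at deeper strata it is not even locally free of rank one over the ramification divisor --- at a point of the small diagonal the corank of ${\rm d}q$ jumps to $d-1$ (in the local model $\mathbf{A}^d\rightarrow \mathbf{A}^d/\sigma_d$ the differentials of $e_2,\dots,e_d$ all vanish at the origin) --- so the computation of $H^\bullet(\mathcal{Q})^{\sigma_d}$ and of $\partial$ requires a stratified or resolved analysis you have not indicated. This is exactly the work the cited proof supplies: Fantechi, extending Kempf, computes the groups $H^i(C_d,T_{C_d})$ directly for all $d$ and $g$, rather than through your equivariant cokernel sequence on $C^d$. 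Until $\partial$ is controlled for all $d\geq 2$, and the $g=2$, $d\geq 3$ case of the converse is handled, the proposal is a program, not a proof.
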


Combining with Theorem \ref{defWd} we obtain 

\begin{cor}\label{corfantechi}
 If $C$ is a smooth non-hyperelliptic curve, then for all $1\leq d< g-1$ there are isomorphisms of functors 
 ${\rm Def}_C \simeq {\rm Def}_{W_d(C)}$. 
 \end{cor}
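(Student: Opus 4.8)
The plan is to obtain Corollary \ref{corfantechi} purely by composing the two isomorphisms of functors already at our disposal, so essentially no new argument is needed; the only care required is in matching hypotheses and in separating the case $d=1$ from the case $d\geq 2$.

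First I would record that a non-hyperelliptic curve automatically has genus $g\geq 3$, since every smooth curve of genus $2$ is hyperelliptic. Thus the standing hypothesis $g\geq 3$ required both by Theorem \ref{defWd} and by Fantechi's Theorem \ref{fantechi} is satisfied, and the range $1\leq d<g-1$ is nonempty.

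Next I would treat the two cases. For $d=1$ one has $C_1=C$, so Theorem \ref{defWd} applied with $d=1$ directly yields an isomorphism of functors
\[
{\rm Def}_C={\rm Def}_{C_1}\xrightarrow{\;\sim\;}{\rm Def}_{W_1(C)},
\]
and there is nothing further to prove. For $2\leq d<g-1$, Fantechi's Theorem \ref{fantechi} gives an isomorphism ${\rm Def}_{C_d}\simeq{\rm Def}_C$ (valid precisely because $g\geq 3$ and $d\geq 2$), while Theorem \ref{defWd} gives that the blowing-down morphism $u_d'\colon{\rm Def}_{C_d}\to{\rm Def}_{W_d(C)}$ is an isomorphism. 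Composing these two,
\[
{\rm Def}_C\xleftarrow{\;\sim\;}{\rm Def}_{C_d}\xrightarrow[\;\sim\;]{\;u_d'\;}{\rm Def}_{W_d(C)},
\]
produces the desired isomorphism ${\rm Def}_C\simeq{\rm Def}_{W_d(C)}$ for every $d$ in the stated range.

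There is no genuine obstacle here, as the result is a formal consequence of the preceding theorems; the only point deserving attention is the bookkeeping on the admissible values of $d$, namely that Fantechi's result is stated for $d\geq 2$ whereas $d=1$ must be handled separately through the identification $C_1=C$. With these cases combined, the corollary follows at once.
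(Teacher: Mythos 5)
Your proposal is correct and coincides with the paper's argument, which obtains the corollary simply by combining Theorem \ref{defWd} with Fantechi's Theorem \ref{fantechi}. Your extra care in separating $d=1$ (where $C_1=C$ makes Fantechi's result unnecessary) and in noting that non-hyperellipticity forces $g\geq 3$ is sound bookkeeping that the paper leaves implicit.
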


 It follows from the previous corollary that ${\rm Def}_{W_d(C)}$ is unobstructed and is prorepresented by a formal 
 power series in $3g-3$ variables as so is ${\rm Def}_C$ (\cite[Proposition 2.4.8 and Corollary 2.6.6]{Se}).

\subsection{Supports of special higher direct image sheaves}

We denote by 
$$W_d^i(C)=\big\{[L]\in {\rm Pic}^d(C)\,|\, h^0(C,L)\geq i+1\big\}$$ 
the Brill--Noether loci parameterizing degree $d$ line bundles on $C$ having 
at least $i+1$ linearly independent global sections, and by  
$$C^i_d=\big\{0\leq D\in {\rm Div}^d(C)\,|\, \dim |D|\geq i\big\}$$ 
the loci parameterizing degree $d$ effective divisors on $C$ whose associated linear series is of dimension at least $i$. 
Note that $u_d^{-1}(W_d^i(C))=C_d^i$.
We start by remarking a general fact concerning the supports of higher direct image sheaves under Abel--Jacobi maps. 
\begin{prop}\label{support}
 If $\sF$ is a coherent sheaf on $C_d$, then ${\rm supp}\, R^j u_{d*}\sF \subset W^j_d(C)$ for all $j>0$.
\end{prop}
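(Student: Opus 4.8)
The plan is to reduce the statement to a standard fact about proper morphisms: for a proper morphism $f\colon X\to Y$ of Noetherian schemes and a coherent sheaf $\sF$ on $X$, the support of $R^jf_*\sF$ is contained in the locus $\{y\in Y \mid \dim f^{-1}(y)\geq j\}$. Granting this, the proposition follows at once from the description of the fibers of the Abel--Jacobi map: since the fiber $u_d^{-1}([L])$ is the complete linear series $|L|\simeq \rp^{\,h^0(C,L)-1}$, one has $\dim u_d^{-1}([L])\geq j$ precisely when $h^0(C,L)\geq j+1$, that is, when $[L]\in W_d^j(C)$.

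To establish the general fact, I would argue via Grothendieck's theorem on formal functions. Fix $y\in Y$ and put $r=\dim f^{-1}(y)$. For each $n$ the $n$-th infinitesimal neighborhood $X_n$ of the fiber has underlying topological space equal to $f^{-1}(y)$, a proper $\kappa(y)$-scheme of dimension $r$, and $\sF_n:=\sF\otimes\sO_{X_n}$ is a coherent sheaf on this space. By Grothendieck's vanishing theorem the cohomological dimension of $f^{-1}(y)$ is at most $r$, so $H^j(X_n,\sF_n)=0$ for every $j>r$ and every $n$. The theorem on formal functions then yields $(R^jf_*\sF)^{\wedge}_y\simeq \varprojlim_n H^j(X_n,\sF_n)=0$ for $j>r$; since $R^jf_*\sF$ is coherent (as $f$ is proper) and completion at $y$ is faithfully flat on finitely generated $\sO_{Y,y}$-modules, the stalk $(R^jf_*\sF)_y$ itself vanishes. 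Hence $y\notin {\rm supp}\,R^jf_*\sF$ whenever $j>\dim f^{-1}(y)$, which is the claim.

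Applying this to $f=u_d$ and using that $u_d^{-1}([L])=|L|$ has dimension $h^0(C,L)-1$, I obtain ${\rm supp}\,R^ju_{d*}\sF\subseteq \{[L]\mid h^0(C,L)-1\geq j\}=W_d^j(C)$ for all $j>0$, as desired. No essential obstacle arises: the only point requiring care is the passage from the vanishing of the completed stalk to the vanishing of the stalk itself, which is routine given the coherence of the higher direct images and the properness of $u_d$.
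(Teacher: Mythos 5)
Your proof is correct and follows essentially the same route as the paper: both arguments reduce the statement to the fact that $R^jf_*\sF$ vanishes over points whose fiber has dimension $<j$, which the paper gets by restricting $u_d$ over the open set $W_d(C)\setminus W_d^j(C)$ and citing \cite[Corollary 11.2]{Ha2}, while you reprove that corollary pointwise via the theorem on formal functions and Grothendieck vanishing. The one step you flag --- passing from the vanishing of the completed stalk to the stalk itself using coherence and faithful flatness of completion --- is handled correctly and is precisely what underlies the cited result.
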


\begin{proof}
There are fiber product diagrams

\begin{equation*}
\centerline{ \xymatrix@=32pt{
 C_d\backslash C^j_d \ar[r]^{\nu} \ar[d]^{\bar u_d} & C_d \ar[d]^{u_d} \\
 W_d(C)\backslash W_d^j(C)\ar[r]^{\mu} & W_d(C) \\ }}
\end{equation*} 

 \noindent where $\nu$ and $\mu$ are open immersions and $\bar u_d$ is the restriction of $u_d$ on $C_d\backslash C_d^j.$
Since $\dim u_d^{-1}([L])=\dim |L|< j$ for all 
$L\in W_d(C)\backslash W_d^j(C)$, we have $R^j\bar u_{d*}(\nu^*\sF)=0$ by \cite[Corollary 11.2]{Ha2}. Therefore by
base change we find $\mu^*R^j u_{d*}\sF=0$. 
\end{proof}

We now give more precise information regarding the supports of two specific higher direct image sheaves: 
$R^1 u_{d*}(u_d^*\Omega_{W_d(C)}\otimes \omega_{C_d})$ and $R^1u_{d*}(\Omega_{C_d / W_d(C)}\otimes \omega_{C_d})$ 
where $\Omega_{C_d / W_d(C)}$ is the sheaf of relative K\"{a}hler differentials. 
The main tool we use towards this study is Ein's cohomological computations of the dual of the normal bundle to the fibers of 
$u_d$ (\cite{Ein}).

\begin{prop}\label{R1a}
 For any $1\leq d\leq g-1$ the support of $R^1 u_{d*}(u_d^*\Omega_{W_d(C)}\otimes \omega_{C_d})$ is contained in $W_d^2(C)$.
\end{prop}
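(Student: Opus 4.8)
The plan is to sharpen the inclusion $\mathrm{supp}\,R^1u_{d*}\sG\subseteq W_d^1(C)$ — already furnished by Proposition \ref{support} with $j=1$, where I write $\sG:=u_d^*\Omega_{W_d(C)}\otimes\omega_{C_d}$ — to an inclusion into $W_d^2(C)$. Since $W_d^2(C)$ is closed and $R^1u_{d*}\sG$ is coherent, by Nakayama it suffices to show that the stalk of $R^1u_{d*}\sG$ vanishes at every closed point $[L]\in W_d^1(C)\setminus W_d^2(C)$. Over the open complement $W_d(C)\setminus W_d^1(C)$ the morphism $u_d$ has finite fibres, so $R^1u_{d*}\sG$ vanishes there automatically; and a point $[L]\in W_d^1(C)\setminus W_d^2(C)$ satisfies $h^0(C,L)=2$, so the scheme-theoretic fibre $F:=u_d^{-1}([L])=|L|$ is a projective line $\rp^1$. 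Thus the whole problem is local along these one-dimensional fibres.

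First I would compute $\sG$ on such a fibre. As $u_d$ is constant on $F$, the restriction $(u_d^*\Omega_{W_d(C)})|_F$ is the free sheaf $\sO_F^{\oplus m}$ with $m=\dim_k\bigl(\Omega_{W_d(C)}\otimes k([L])\bigr)$; this only uses that restricting to the fibre factors $\Omega_{W_d(C)}$ through $\Omega_{W_d(C)}\otimes k([L])$, and so does \emph{not} require $\Omega_{W_d(C)}$ to be locally free. Hence $\sG|_F\cong(\omega_{C_d}|_F)^{\oplus m}$ and everything reduces to the single vanishing $H^1(F,\omega_{C_d}|_F)=0$. By adjunction $\omega_{C_d}|_F\cong\omega_{\rp^1}\otimes\det N^*_{F/C_d}$, so I need the degree of the conormal determinant. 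The relative form of the identification $T_{C_d,D}\cong H^0(C,\sO_C(D)|_D)$ presents $N_{F/C_d}$ on $F=\rp\bigl(H^0(C,L)\bigr)$ as the kernel of a morphism $H^1(C,\sO_C)\otimes\sO_F\to H^1(C,L)\otimes\sO_F(1)$; fibrewise this is multiplication by the defining section $s_D$, which is surjective since $H^1(C,L|_D)=0$ (the sheaf $L|_D$ sits on the finite scheme $D$). Thus $N_{F/C_d}$ is the kernel of a surjection $\sO_F^{\oplus g}\twoheadrightarrow\sO_F(1)^{\oplus h^1(L)}$, whence $\det N_{F/C_d}\cong\sO_{\rp^1}(d-g-1)$ (using $h^1(L)=g-d+1$) and $\omega_{C_d}|_F\cong\sO_{\rp^1}(g-d-1)$. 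This is the $r=1$ instance of Ein's computation of the conormal bundle of the fibres \cite{Ein}. Since $d\leq g-1$ the degree $g-d-1$ is nonnegative, so $H^1(F,\omega_{C_d}|_F)=0$.

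The step I expect to be the main obstacle is passing from this fibrewise vanishing to the vanishing of the stalk, since $u_d$ is not flat along $W_d^1(C)$ — the fibre dimension jumps — so ordinary cohomology and base change does not apply. I would circumvent this with the theorem on formal functions, which requires no flatness: writing $F_n$ for the closed subscheme of $C_d$ defined by $\mathfrak m_{[L]}^{n+1}\sO_{C_d}$, one has $(R^1u_{d*}\sG)^{\wedge}_{[L]}\cong\varprojlim_n H^1\bigl(F_n,\sG\otimes\sO_{F_n}\bigr)$, and it is enough to prove $H^1(F_n,\sG\otimes\sO_{F_n})=0$ for all $n$ by induction. The kernel of the surjection $\sG\otimes\sO_{F_n}\twoheadrightarrow\sG\otimes\sO_{F_{n-1}}$ is, by right exactness of the tensor product, a quotient of $\sG\otimes\bigl(\mathfrak m_{[L]}^{n}\sO_{C_d}/\mathfrak m_{[L]}^{n+1}\sO_{C_d}\bigr)$; as the graded piece is a quotient of a free $\sO_F$-module, this kernel is a quotient of a finite direct sum of copies of $\sG|_F\cong\sO_{\rp^1}(g-d-1)^{\oplus m}$. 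Its $H^1$ then vanishes, because $H^1$ of that line bundle vanishes and $H^2$ of any coherent sheaf on the one-dimensional scheme $F$ vanishes by Grothendieck. Plugging this into the long exact sequence together with the inductive hypothesis and the base case $F_0=F$ gives $H^1(F_n,\sG\otimes\sO_{F_n})=0$ for every $n$. Hence the completed stalk, and therefore the stalk itself, vanishes, and $\mathrm{supp}\,R^1u_{d*}\sG\subseteq W_d^2(C)$ as claimed.
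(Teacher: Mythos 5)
Your argument is correct and follows the same route as the paper: reduce to the vanishing of the stalk at $[L]\in W_d^1(C)\setminus W_d^2(C)$, use Ein's presentation of the normal bundle of the fibre $\rp^1=|L|$ to get $\omega_{C_d}|_F\simeq\sO_{\rp^1}(g-d-1)$, and conclude by the theorem on formal functions with an induction on infinitesimal neighbourhoods. The only (immaterial) differences are that you use the $\mathfrak m_{[L]}$-adic rather than the $\sI_F$-adic filtration and treat the graded pieces directly as quotients of free modules instead of via $\mathrm{Sym}^n N^{\vee}$.
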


\begin{proof}
By Proposition \ref{support} we have that ${\rm supp}\,R^1u_{d*}(u_d^*\Omega_{W_d(C)}\otimes \omega_{C_d}) \subset W_d^1(C)$, 
so we only need to show that the stalks
$$R^1u_{d*}(u_d^*\Omega_{W_d(C)}\otimes \omega_{C_d})_{[L]}$$
 vanish for all $[L]\in W^1_d(C) \backslash W^2_d(C)$. From now on we fix an element $[L]\in W^1_d(C) \backslash W^2_d(C)$.
Recall that, as shown in \cite[Theorem 1.1]{Ein}, the normal bundle $N$ of the fiber $P_L:=u_d^{-1}([L])\simeq \rp ^1$ at $[L]$ sits in an exact sequence of the form
\begin{eqnarray}\label{normal}
0\longrightarrow N\longrightarrow H^1(C,\sO_C)\otimes \sO_{P_L}\longrightarrow H^1(C,L)\otimes \sO_{P_L}(1)\longrightarrow 0.
\end{eqnarray}
From this we easily deduce that ${\rm det}\, N^{\vee}\simeq \sO_{\rp^1}(g-d+1)$, and moreover 
that $\omega_{C_d|P_L}\simeq \omega_{P_L}\otimes {\rm det}\, N^{\vee}
\simeq \sO_{\rp^1}(g-d-1)$ by adjunction. 

We apply the theorem on formal functions to get the vanishing of the above mentioned stalks.
Denote by $\sI$ the ideal sheaf defining $E=E_1:=P_L$ in $C_d$ and let $E_n$ be the subscheme defined by $\sI^n$. 
We have exact sequences 
\begin{eqnarray}\label{seq2}
0\longrightarrow \sI^n / \sI^{n+1}\longrightarrow i_{(n+1)*} \sO_{E_{n+1}}\longrightarrow i_{n*} \sO_{E_n}\longrightarrow 0
\end{eqnarray}
where the maps $i_n:E_n\rightarrow C_d$ denote the natural inclusions.
 Set now $\sF=\sF_1:=u_d^*\Omega_{W_d(C)}\otimes \omega_{C_d}$ and 
$\sF_n:=i_n^*\sF=i_n^*(u_d^*\Omega_{W_d(C)}\otimes \omega_{C_d})$.
By the theorem on formal functions we obtain isomorphisms
$$R^1u_{d*}(u_d^*\Omega_{W_d(C)}\otimes \omega_{C_d})_{[L]}^{\widehat{}} \simeq \varprojlim H^1(E_n,\sF_n),$$
so that it is enough to check the vanishing of cohomology groups on the RHS.
By tensoring \eqref{seq2} by $\sF$, and by using the isomorphisms 
$\sI^n / \sI^{n+1}\simeq {\rm Sym}^n\, N^{\vee}$, we deduce new exact sequences
$$0\longrightarrow \sK_n \stackrel{\psi}{\longrightarrow} {\rm Sym}^n\, N^{\vee} \otimes \sF \longrightarrow
i_{(n+1)*}\sO_{E_{n+1}}\otimes \sF \longrightarrow i_{n*}\sO_{E_n}\otimes \sF \longrightarrow 0$$
where we denote by $\sK_n$ the kernel of $\psi$. 
We are interested in the vanishing of 
$H^1(E_{n+1},\sF_{n+1})\simeq H^1(C_d, i_{(n+1)*}\sO_{E_{n+1}}\otimes \sF)$. We proceed by induction on $n$.
The base step $n=0$ is easily proved as 
$$H^1(E,\sF_{|E})\simeq H^1(E, \omega_{C_d|E})^{\oplus d}
\simeq  H^1(\rp^1,\sO_{\rp^1}(g-d-1))^{\oplus d}=0.$$ Now we show that if $H^1(E_{n}, \sF_n)=0$, then also $H^1(E_{n+1},\sF_{n+1})=0$.
First of all we note that all we need is the vanishing of 
\begin{eqnarray}\label{van}
H^1(E, {\rm Sym}^n\, N^{\vee}\otimes \sF_{|E})\simeq H^1(\rp^1,{\rm Sym}^n N^{\vee}\otimes \sO_{\rp^1}(g-d-1))^{\oplus d}.
\end{eqnarray}
In fact, by denoting by $\sQ_n$ the cokernel of $\psi$, the inductive hypothesis tells us that 
$H^1(E_{n+1},\sF_{n+1})$ vanishes as soon as $H^1(E_{n+1}, \sQ_n)$ does. 
But this is the case
if $H^1(E,{\rm Sym}^n\, N^{\vee}\otimes \sF_{|E})=0$ as $H^2(E_{n+1},\sK_{n})=0$ (recall that $\dim E_{n+1}=1$).
 
Finally, in order to get the vanishing of the RHS of \eqref{van},
 we note that dualizing the sequence \eqref{normal} we get
 surjections ${\rm Sym}^n (H^1(C,\sO_C)^{\vee}\otimes 
 \sO_{\rp^1})\twoheadrightarrow {\rm Sym}^n N^{\vee}$ for all $n\geq 1$. Therefore there are surjections
 \begin{equation}\label{for1}
 H^1(\rp^1,{\rm Sym}^n \big(H^1(C,\sO_C)^{\vee}\otimes \sO_{\rp^1}\big)\otimes \sO_{\rp^1}(g-d-1))\twoheadrightarrow 
 H^1(\rp^1,{\rm Sym}^n N^{\vee}\otimes \sO_{\rp^1}(g-d-1))
 \end{equation}
 from which one easily deduces the vanishing of \eqref{van} as the LHS of \eqref{for1} is zero.
 \end{proof}

\begin{prop}\label{relativediff}
 For any $1\leq d< g-1$ the support of $R^1u_{d*}(\Omega_{C_d / W_d(C)}\otimes \omega_{C_d})$ is contained in $W_d^2(C)$.
\end{prop}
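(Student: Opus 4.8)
The plan is to mirror, line by line, the proof of Proposition \ref{R1a}, simply replacing the sheaf $u_d^*\Omega_{W_d(C)}\otimes\omega_{C_d}$ by $\Omega_{C_d/W_d(C)}\otimes\omega_{C_d}$. By Proposition \ref{support} the support of $R^1u_{d*}(\Omega_{C_d/W_d(C)}\otimes\omega_{C_d})$ is already contained in $W_d^1(C)$, so it suffices to prove that the stalk
$$R^1u_{d*}(\Omega_{C_d/W_d(C)}\otimes\omega_{C_d})_{[L]}$$
vanishes for every $[L]\in W_d^1(C)\setminus W_d^2(C)$. I fix such an $[L]$; then, exactly as recalled in the proof of Proposition \ref{R1a}, the fiber $P_L:=u_d^{-1}([L])$ is isomorphic to $\rp^1$, its normal bundle $N$ fits in the exact sequence \eqref{normal}, and $\omega_{C_d|P_L}\simeq\sO_{\rp^1}(g-d-1)$ by adjunction.

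The one genuinely new ingredient is the behaviour of the relative differentials along the fiber. Here I would invoke the base-change property of Kähler differentials: since $P_L$ is the reduced smooth scheme-theoretic fiber of $u_d$ over the closed point $[L]$, restricting the relative cotangent sheaf yields
$$\Omega_{C_d/W_d(C)}|_{P_L}\simeq\Omega_{P_L}\simeq\sO_{\rp^1}(-2),$$
and this holds regardless of the fact that $W_d(C)$ is singular at $[L]$. Tensoring by $\omega_{C_d|P_L}$ then gives
$$(\Omega_{C_d/W_d(C)}\otimes\omega_{C_d})|_{P_L}\simeq\sO_{\rp^1}(g-d-3).$$
This is the line bundle appearing in the base step of Proposition \ref{R1a} twisted down by $2$, which is precisely why the range of validity shrinks from $d\leq g-1$ to $d<g-1$.

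With this restriction in hand, the vanishing of the stalk follows from the theorem on formal functions exactly as before. Writing $\sF:=\Omega_{C_d/W_d(C)}\otimes\omega_{C_d}$, letting $\sI$ be the ideal of $E=E_1:=P_L$, $E_n$ the subscheme cut out by $\sI^n$, and $\sF_n:=i_n^*\sF$, one has $R^1u_{d*}\sF_{[L]}^{\widehat{}}\simeq\varprojlim H^1(E_n,\sF_n)$. The same induction on $n$ applied to the sequences \eqref{seq2} tensored by $\sF$ — using again that $\dim E_{n+1}=1$ to kill the relevant $H^2$ terms — reduces everything to the vanishings $H^1(\rp^1,\,{\rm Sym}^n\,N^\vee\otimes\sO_{\rp^1}(g-d-3))=0$ for all $n\geq 0$. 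For $n=0$ this is $H^1(\rp^1,\sO_{\rp^1}(g-d-3))=0$, which holds exactly because $d<g-1$. For $n\geq 1$, dualizing \eqref{normal} presents $N^\vee$ as a quotient of the trivial bundle $H^1(C,\sO_C)^\vee\otimes\sO_{\rp^1}$; applying the right-exact functor ${\rm Sym}^n$ and twisting by $\sO_{\rp^1}(g-d-3)$ produces a surjection
$$H^1\big(\rp^1,\,{\rm Sym}^n(H^1(C,\sO_C)^\vee)\otimes\sO_{\rp^1}(g-d-3)\big)\twoheadrightarrow H^1\big(\rp^1,\,{\rm Sym}^n\,N^\vee\otimes\sO_{\rp^1}(g-d-3)\big)$$
whose source is a direct sum of copies of $H^1(\rp^1,\sO_{\rp^1}(g-d-3))=0$, forcing the target to vanish and completing the induction.

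The step I expect to be the main obstacle is the clean identification $\Omega_{C_d/W_d(C)}|_{P_L}\simeq\Omega_{P_L}$. Because $W_d(C)$ is singular along $W_d^1(C)$, this cannot be read off from smoothness of $u_d$, and one genuinely needs that the scheme-theoretic fiber over $[L]$ is the reduced smooth rational curve described by Ein in \cite{Ein} — equivalently, that the codifferential $u_d^*\Omega_{W_d(C)}|_{P_L}\to\Omega_{C_d}|_{P_L}$ has image exactly the conormal bundle $N^\vee$, so that its cokernel is $\Omega_{P_L}$. Once this restriction is pinned down, the rest of the argument is a faithful transcription of the proof of Proposition \ref{R1a}, and the numerical shift by $2$ accounts for the exact range $1\leq d<g-1$.
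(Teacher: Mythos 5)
Your proposal is correct and follows essentially the same route as the paper: the paper likewise reduces to the vanishing of $H^1(\rp^1,{\rm Sym}^n N^{\vee}\otimes \sO_{\rp^1}(g-d-3))$ via the theorem on formal functions as in Proposition \ref{R1a}, using the identification $(\Omega_{C_d/W_d(C)})_{|E}\simeq \omega_E\simeq \sO_{\rp^1}(-2)$ (quoted from \cite[Proposition 8.10]{Ha2}) and the surjections coming from dualizing \eqref{normal}. The point you flag as the main obstacle --- that the restriction of the relative differentials to the scheme-theoretic fiber is $\Omega_{P_L}$ --- is exactly the fact the paper invokes, so there is no gap.
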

\begin{proof}
We follow the strategy of Proposition \ref{R1a} so that we only need to check the vanishings of
\begin{equation}\label{for2}
H^1\big(E,\big( \Omega_{C_d / W_d(C)}\big)_{|E}\otimes \omega_{C_d|E}\big)\quad  \mbox{ and }\quad 
H^1\big(E,{\rm Sym}^n N^{\vee}\otimes \big( \Omega_{C_d / W_d(C)}\big)_{|E}\otimes \omega_{C_d|E}\big)
\end{equation}
for all $n\geq 1$. 
We recall the isomorphism $\big(\Omega_{C_d / W_d(C)}\big)_{|E}\simeq \omega_E\simeq \sO_{\rp^1}(-2)$ 
(\emph{cf}. \emph{e.g.} \cite[Proposition 8.10]{Ha2}) 
and that $\omega_{C_d|E}\simeq \sO_{\rp^1}(g-d-1)$.
Therefore
$$H^1(E,\big( \Omega_{C_d / W_d(C)}\big)_{|E} \otimes \omega_{C_d|E})\simeq H^1(\rp ^1, \sO_{\rp^1}(g-d-3))=0$$ as soon as
 $d\leq g-2$. For the second set of groups in \eqref{for2} we note the isomorphisms 
 \begin{equation}\label{c1}
 H^1(E,{\rm Sym}^n N^{\vee}\otimes \big( \Omega_{C_d / W_d(C)}\big)_{|E} \otimes 
 \omega_{C_d|E})\simeq H^1(\rp^1,{\rm Sym}^n N^{\vee}\otimes \sO_{\rp^1}(g-d-3))
 \end{equation}
 and the surjections
 $$H^1(\rp^1,{\rm Sym}^n (H^1(C,\sO_C)^{\vee}\otimes \sO_{\rp^1})\otimes \sO_{\rp^1}(g-d-3))\twoheadrightarrow
 H^1(\rp^1,{\rm Sym}^n N^{\vee}\otimes \sO_{\rp^1}(g-d-3))$$ 
deduced from \eqref{normal}. As the groups on the LHS of the previous surjections vanish, so the groups in \eqref{c1} do. 
 \end{proof}

 \begin{rmk}
  The previous two propositions can be extended to all higher direct images to yield inclusions
  $${\rm supp}\, R^j u_{d*}(u_d^*\Omega_{W_d(C)}\otimes \omega_{C_d})\subset W^{j+1}_d(C)\quad  \mbox{ and }\quad 
  {\rm supp}\, R^j u_{d*}(\Omega_{C_d /W_d(C)}\otimes \omega_{C_d})\subset W^{j+1}_d(C)$$ for all $j\geq 1$ (the latter holds for $d\leq g-2$).
  While the proof of the first set of inclusions do not require any additional tools, for the latter we need to involve Bott's formula 
  to check that $H^j(\rp^j, \Omega_{\rp^j}\otimes \sO_{\rp^j}(g-d-1))=0$ for all $j\geq 1$.
 \end{rmk}

 \subsection{Proof of Theorem \ref{defWd}}\label{secproof}
 
 To prove Theorem \ref{defWd} we use the criterion \cite[Remark 2.3.8]{Se} which we have already recalled in Corollary \ref{corbd}.
 
 In our setting, the functor ${\rm Def}_{C_d}$ is prorepresentable and unobstructed by Theorem \ref{fantechi}. 
 Therefore we only need to prove that ${\rm Def}_{W_d(C)}$ is prorepresentable and that the differential to
 $u_d'$ is an isomorphism. 
 A sufficient condition for the prorepresentability of ${\rm Def}_{W_d(C)}$ 
 is the vanishing of $H^0(W_d(C),T_{W_d(C)})$ (\cite[Corollary 2.6.4]{Se}). 
 On the other hand, as $u_d$ is a small resolution (as we are supposing that $C$ is non-hyperelliptic), we obtain 
 an isomorphism $u_{d*}T_{C_d}\simeq T_{W_d(C)}$ (\cite[Lemma 21]{SV}) which immediately yields 
 $$H^0(W_d(C),T_{W_d(C)})\simeq H^0(C_d,T_{C_d})\simeq H^0(C^d,T_{C^d})^{\sigma_d}=0$$ by the K\"{u}nneth decomposition 
 (here $\sigma_d$ denotes the $d$-symmetric group).
    
 We now prove that the differential ${\rm d} u_d'$ is an isomorphism. This is slightly more difficult and it will take the rest 
 of the subsection. To begin with, we complete the morphisms 
$\zeta:\rl u_d^*\Omega_{W_d(C)}\rightarrow u^*_d\Omega_{W_d(C)}$ and
$\bar \delta_1: \rl u_d^*\Omega_{W_d(C)}  \rightarrow \Omega_{C_d}$ defined in \eqref{truncation} and \eqref{deltabar}  
to distinguished triangles:
\begin{gather}\notag
\rl u_d^*\Omega_{W_d(C)}\longrightarrow u^*_d\Omega_{W_d(C)}\longrightarrow M \longrightarrow \rl u_d^*\Omega_{W_d(C)}[1]\\ \notag
\rl u_d^*\Omega_{W_d(C)}  \longrightarrow \Omega_{C_d}\longrightarrow P\longrightarrow \rl u_d^*\Omega_{W_d(C)}  [1].
\end{gather}
Moreover, since $\bar \delta_1 = \delta_1\circ \zeta$ where $\delta_1:u_d^*\Omega_{W_d(C)}\rightarrow \Omega_{C_d}$ is the natural morphism, 
these triangles fit in the following commutative diagram:

\begin{align}\label{diagram}
\xymatrix@=32pt{
 \rl u_d^*\Omega_{W_d(C)}  \ar[r]^{\zeta}\ar[d]^{\bar \delta_1} & u_d^*\Omega_{W_d(C)} \ar[d]^{\delta_1}\ar[r] & M \ar[d]\ar[r] &  \rl u_d^*\Omega_{W_d(C)}  [1]\ar[d]\\
 \Omega_{C_d} \ar[r] \ar[d] & \Omega_{C_d} \ar[r]\ar[d]  & 0 \ar[d]\ar[r] & \Omega_{C_d} [1] \ar[d]\\ 
 P \ar[d]\ar[r] & N \ar[r]\ar[d] & M[1] \ar[r]\ar[d] & P[1]\ar[d]\\
  \rl u_d^*\Omega_{W_d(C)}  [1] \ar[r] & u_d^*\Omega_{W_d(C)}[1]  \ar[r] &  M[1]\ar[r] & \rl u_d^*\Omega_{W_d(C)}  [2] \\}
 \end{align}

 \noindent where $N$ is the cone of $\delta_1$.
Therefore, from the description of ${\rm d}u_d'$ as in Proposition \ref{bdtanget}, 
in order to prove that ${\rm d}u_d'$ is an isomorphism, it is enough to prove that 
${\rm Ext}^1_{\sO_{C_d}}(P,\sO_{C_d})={\rm Ext}^2_{\sO_{C_d}}(P,\sO_{C_d})=0$ which is implied by the vanishings
$${\rm Ext}^i_{\sO_{C_d}}(N,\sO_{C_d})=0\quad \mbox{ and }\quad {\rm Ext}^j_{\sO_{C_d}}(M[1],\sO_{C_d})=0 \quad \mbox{ for }i=1,2\quad \mbox{and}\quad  j=2,3.$$
The key ingredient to prove these vanishings is the Grothendieck--Verdier duality which 
reduces calculations from $C_d$ to the Jacobian $J(C)$ of $C$. 
We fix a point $Q\in C$ and denote by $\iota_d$ the closed immersion 
$$\iota_d: W_d(C) \hookrightarrow J(C)\quad \quad L\mapsto L\otimes \sO_C(-dQ).$$
Moreover we define the composition 
\begin{eqnarray}\label{fd}
f_d:=\iota_d \circ u_d:C_d\longrightarrow J(C), \quad \quad P_1+\cdots +P_d\mapsto \sO_{C}(P_1+\cdots +P_d-d\, Q).
\end{eqnarray}
Then applications of Grothendieck--Verdier duality (\cite[p. 7-8]{Ha1}) yield isomorphisms
\begin{equation*}
{\rm Ext}_{\sO_{C_d}}^j(N,\sO_{C_d})\simeq{\rm Ext}_{\sO_{J(C)}}^{j+g-d}(\rr f_{d*}(N\otimes \omega_{C_d}),\sO_{J(C)})
\end{equation*}
and 
\begin{equation*}
{\rm Ext}^j_{\sO_{C_d}}(M[1],\sO_{C_d})\simeq {\rm Ext}_{\sO_{J(C)}}^{j+g-d}(\rr f_{d*}(M[1]\otimes \omega_{C_d}),\sO_{J(C)}).
\end{equation*}
At this point the proof that ${\rm d}u_d'$ is an isomorphism follows from the following Propositions \ref{Ext3} and \ref{N}.

\begin{prop}\label{Ext3}
If $C$ is a smooth non-hyperelliptic curve of genus $g$, then for $j=2,3$ we have
 \begin{align*}
 {\rm Ext}_{\sO_{J(C)}}^{j+g-d}(\rr f_{d*}(M[1]\otimes \omega_{C_d}),\sO_{J(C)})=0.
 \end{align*}
\end{prop}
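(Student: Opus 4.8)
The plan is to undo the Grothendieck--Verdier duality isomorphism recalled just before the statement and to prove the equivalent vanishing directly on $C_d$. That isomorphism identifies the group in question with
$${\rm Ext}^j_{\sO_{C_d}}(M[1],\sO_{C_d})\simeq {\rm Ext}^{j-1}_{\sO_{C_d}}(M,\sO_{C_d}),$$
so it is enough to prove that ${\rm Ext}^1_{\sO_{C_d}}(M,\sO_{C_d})$ and ${\rm Ext}^2_{\sO_{C_d}}(M,\sO_{C_d})$ vanish.

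First I would pin down the complex $M$. As $M$ is the cone of the truncation morphism $\zeta:\rl u_d^*\Omega_{W_d(C)}\rightarrow u_d^*\Omega_{W_d(C)}$, and $\zeta$ is an isomorphism on $H^0$, we have $M\simeq\big(\tau_{\leq -1}\rl u_d^*\Omega_{W_d(C)}\big)[1]$; concretely $H^{-i-1}(M)\simeq L^iu_d^*\Omega_{W_d(C)}$ for $i\geq 1$ while $H^k(M)=0$ for $k\geq -1$, so $M$ is concentrated in cohomological degrees $\leq -2$. Each Tor sheaf $L^iu_d^*\Omega_{W_d(C)}$ with $i\geq 1$ vanishes wherever $\Omega_{W_d(C)}$ is locally free, hence is supported on $u_d^{-1}({\rm Sing}\,W_d(C))=u_d^{-1}(W_d^1(C))=C_d^1$.

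Next I would bring in the codimension estimate. Since $C$ is non-hyperelliptic, the resolution $u_d$ is small---equivalently, Martens' theorem forces $\dim C_d^1\leq d-2$---so $C_d^1$ has codimension at least two in the smooth variety $C_d$; therefore ${\rm Ext}^p_{\sO_{C_d}}(L^iu_d^*\Omega_{W_d(C)},\sO_{C_d})=0$ for all $p\leq 1$ and all $i\geq 1$. Plugging this into the spectral sequence
$$E_2^{p,q}={\rm Ext}^p_{\sO_{C_d}}\big(H^{-q}(M),\sO_{C_d}\big)\Rightarrow {\rm Ext}^{p+q}_{\sO_{C_d}}(M,\sO_{C_d})$$
(the same device used in the proof of Proposition \ref{bdtanget}), a term $E_2^{p,q}$ can be nonzero only when $q\geq 2$ (the degrees where $M$ lives) and $p\geq 2$ (the codimension of the support), hence $E_2^{p,q}=0$ as soon as $p+q\leq 3$. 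This yields ${\rm Ext}^n_{\sO_{C_d}}(M,\sO_{C_d})=0$ for all $n\leq 3$, and in particular for $n=1,2$, which is what we need.

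The step carrying the weight here is the bookkeeping rather than any single hard estimate: identifying $H^\bullet(M)$ with the higher Tor sheaves and placing them on the small exceptional locus $C_d^1$. This is precisely why the present case is the \emph{easy half} of the computation. For the companion cone $N$ of $\delta_1$ (Proposition \ref{N}) the sheaf $H^0(N)=\Omega_{C_d/W_d(C)}$ sits in degree $0$, so controlling ${\rm Ext}^2$ forces one to handle a sheaf supported only in codimension two and hence to invoke Ein's regularity computations through Propositions \ref{R1a} and \ref{relativediff}; by contrast $M$ lives in degrees $\leq -2$, so the crude codimension-two bound already suffices and no such input is needed. One could instead work entirely on $J(C)$, resolving $\rr f_{d*}(M\otimes\omega_{C_d})$ through the triangle attached to $\zeta$ together with the support bounds of Proposition \ref{R1a}, but then the cohomological degree bookkeeping becomes considerably more delicate.
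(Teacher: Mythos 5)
Your proof is correct, but it follows a genuinely different route from the paper's. The paper keeps the Grothendieck--Verdier reduction to $J(C)$ and runs the spectral sequence ${\rm Ext}^p_{\sO_{J(C)}}(R^{-q}f_{d*}(M\otimes\omega_{C_d}),\sO_{J(C)})\Rightarrow{\rm Ext}^{p+q}_{\sO_{J(C)}}(\rr f_{d*}(M\otimes\omega_{C_d}),\sO_{J(C)})$; the delicate term there is $R^1f_{d*}(M\otimes\omega_{C_d})$, whose support must be pushed into $W_d^2(C)$ via Proposition \ref{R1a} (i.e., Ein's description of the normal bundle of the fibers) before Martens' bound $\dim W_d^2(C)\le d-5$ can kill the corresponding $E_2$-term. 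You instead undo the duality and work on $C_d$: your identification $H^{-i-1}(M)\simeq L^iu_d^*\Omega_{W_d(C)}$ for $i\ge 1$, with $H^k(M)=0$ for $k\ge -1$, is correct (the truncation $\zeta$ is an isomorphism on $H^0$ and both complexes live in non-positive degrees), these Tor sheaves are indeed supported on $C_d^1=u_d^{-1}({\rm Sing}\,W_d(C))$, and smallness of $u_d$ then places every nonzero $E_2^{p,q}$ in the region $p\ge 2$, $q\ge 2$, killing ${\rm Ext}^n_{\sO_{C_d}}(M,\sO_{C_d})$ for $n\le 3$. This is shorter and entirely bypasses Ein's computation and Proposition \ref{R1a} for this half of the argument, and your closing remark correctly isolates why no such shortcut exists for Proposition \ref{N}: there the cokernel $\Omega_{C_d/W_d(C)}$ sits in cohomological degree $0$ on a locus of codimension exactly two, so the crude bound cannot reach ${\rm Ext}^2$. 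The one point you should address explicitly is that $\rl u_d^*\Omega_{W_d(C)}$ need not be bounded below, since $W_d(C)$ is singular and $u_d$ has no a priori finite Tor-dimension; either replace $M$ by a truncation $\tau_{\ge -N}M$ for $N\gg 0$ (which does not change ${\rm Ext}^{\le 3}$, as the discarded piece contributes only in degrees $>N$) or observe that each total degree of your spectral sequence involves only finitely many terms because ${\rm Ext}^p_{\sO_{C_d}}(\sG,\sO_{C_d})=0$ for $p>d$. This is a minor repair --- the paper's own Proposition \ref{bdtanget} passes over the same issue --- but it is worth a sentence.
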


\begin{prop}\label{N}
 If $C$ is a smooth non-hyperelliptic curve of genus $g$, then for $j=1,2$ we have
 \begin{align*}
 {\rm Ext}_{\sO_{J(C)}}^{j+g-d}(\rr f_{d*}(N\otimes \omega_{C_d}),\sO_{J(C)})=0.
\end{align*}
 \end{prop}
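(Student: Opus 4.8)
The plan is to compute the derived dual ${\rm RHom}_{\sO_{J(C)}}\big(\rr f_{d*}(N\otimes\omega_{C_d}),\sO_{J(C)}\big)$ on the \emph{smooth} variety $J(C)$, exploiting the elementary vanishing ${\rm Ext}^p_{\sO_{J(C)}}(\sG,\sO_{J(C)})=0$ valid for every coherent sheaf $\sG$ whenever $p<\codim_{J(C)}{\rm supp}\,\sG$, and then feeding in the support estimates of Propositions \ref{support} and \ref{relativediff} together with the classical Brill--Noether dimension bounds. First I would describe $N$ concretely: since it is the cone of the map of sheaves $\delta_1\colon u_d^*\Omega_{W_d(C)}\to\Omega_{C_d}$, it has cohomology only in degrees $-1$ and $0$, with $H^0(N)=\Omega_{C_d/W_d(C)}$ (the second fundamental exact sequence) and $H^{-1}(N)=\ker\delta_1$. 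As $u_d$ is an isomorphism over $W_d(C)\setminus W_d^1(C)$, the map $\delta_1$ is an isomorphism off the exceptional locus, so both $\Omega_{C_d/W_d(C)}$ and $\ker\delta_1$ are supported on $C_d^1=u_d^{-1}(W_d^1(C))$.

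Using the truncation triangle $\ker\delta_1[1]\to N\to\Omega_{C_d/W_d(C)}\to\ker\delta_1[2]$, tensoring by $\omega_{C_d}$, applying $\rr f_{d*}=\iota_{d*}\rr u_{d*}$ and then ${\rm RHom}_{\sO_{J(C)}}(-,\sO_{J(C)})$, the statement reduces via the resulting long exact sequence to the vanishings ${\rm Ext}^{g-d+j}_{\sO_{J(C)}}\big(\rr f_{d*}(\Omega_{C_d/W_d(C)}\otimes\omega_{C_d}),\sO_{J(C)}\big)=0$ and ${\rm Ext}^{g-d+j-1}_{\sO_{J(C)}}\big(\rr f_{d*}(\ker\delta_1\otimes\omega_{C_d}),\sO_{J(C)}\big)=0$ for $j=1,2$. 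For each of these two complexes the cohomology sheaves are $\iota_{d*}R^ku_{d*}(-)$; by Proposition \ref{support} the $k$-th one is supported on $W_d^k(C)$ for $k\ge1$, the degree-zero pieces sit on $W_d^1(C)$, and---this is the crucial input---Proposition \ref{relativediff} (via Ein's regularity computation) improves the bound for $R^1u_{d*}(\Omega_{C_d/W_d(C)}\otimes\omega_{C_d})$ from $W_d^1(C)$ to $W_d^2(C)$.

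To convert supports into codimensions I would invoke Martens' theorem: for $C$ non-hyperelliptic one has the \emph{strict} bounds $\dim W_d^r(C)\le d-2r-1$, whence $\codim_{J(C)}W_d^r(C)\ge g-d+2r+1$; in particular $\codim_{J(C)}W_d^1(C)\ge g-d+3$ and $\codim_{J(C)}W_d^2(C)\ge g-d+5$. Running the spectral sequence $E_2^{p,q}={\rm Ext}^p_{\sO_{J(C)}}(\mathcal H^{-q},\sO_{J(C)})\Rightarrow{\rm Ext}^{p+q}$, a cohomology sheaf $\mathcal H^k$ placed in degree $k$ and supported on $W_d^{s}(C)$ contributes to ${\rm Ext}^{m}$ through ${\rm Ext}^{m+k}(\mathcal H^k,\sO_{J(C)})$, which vanishes as soon as $m+k<\codim_{J(C)} W_d^{s}(C)$. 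Plugging in $m=g-d+j$ (respectively $m=g-d+j-1$) together with the codimension bounds above, one checks that every term in the relevant range vanishes for $j=1,2$, which completes the argument.

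The delicate point---and the single place where non-hyperellipticity is essential---is the interplay of two tight estimates in the case $j=2$. On one hand the degree-zero term $u_{d*}(\Omega_{C_d/W_d(C)}\otimes\omega_{C_d})$, supported on $W_d^1(C)$, forces the requirement $g-d+2<\codim_{J(C)}W_d^1(C)$, which is exactly the strict Martens inequality $\dim W_d^1(C)\le d-3$; for a hyperelliptic curve one would only have $\dim W_d^1(C)\le d-2$ and the argument would break down. On the other hand, Proposition \ref{relativediff} is indispensable precisely here: without the improvement from $W_d^1(C)$ to $W_d^2(C)$ for the term $R^1u_{d*}(\Omega_{C_d/W_d(C)}\otimes\omega_{C_d})$, its contribution ${\rm Ext}^{g-d+3}(\mathcal H^1,\sO_{J(C)})$ would fail to vanish when $j=2$. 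Thus the proof hinges on combining Ein's regularity bound with the classical strict Brill--Noether dimension estimates, and I expect the careful bookkeeping of these codimensions---rather than any single hard vanishing---to be the main technical hurdle.
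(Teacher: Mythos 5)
Your argument is correct and follows essentially the same route as the paper: identify the two cohomology sheaves of $N$ as $\ker\delta_1$ and $\Omega_{C_d/W_d(C)}$, bound the supports of the $R^jf_{d*}$ via Propositions \ref{support} and \ref{relativediff}, and kill the relevant $E_2$-terms of the ${\rm Ext}$ spectral sequence using Martens' strict bound $\dim W_d^r(C)\le d-2r-1$; whether one first splits $N$ by the truncation triangle (as you do) or runs the Grothendieck spectral sequence for $\rr u_{d*}$ (as the paper does) is immaterial. Your placement $H^{-1}(N)=\ker\delta_1$, $H^0(N)=\Omega_{C_d/W_d(C)}$ is the correct one (the paper states the two degrees the other way around), and your identification of the two tight estimates in the case $j=2$ matches exactly where the hypotheses are used.
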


\begin{proof}[Proof of Proposition \ref{Ext3}]
Our assertion is equivalent to ${\rm Ext}_{\sO_{J(C)}}^{j+g-d}(\rr f_{d*}(M\otimes \omega_{C_d}),\sO_{J(C)})=0$ for $j=1,2$.
First of all we note that 
\begin{eqnarray}\label{R10}
{\rm supp}\,R^j f_{d*}(M\otimes \omega_{C_d})\subset W^1_d(C)\quad \mbox { for }\quad j=-1,0.
\end{eqnarray}
To see this we first tensorize the top distinguished triangle of \eqref{diagram} by $\omega_{C_d}$, 
and then we apply the functor $\rr u_{d*}$. Hence 
projection formula (\cite[Proposition 5.6]{Ha1}), together with the isomorphism \eqref{ratsing}, yields an exact sequence 
$$0\longrightarrow R^{-1}u_{d*}(M\otimes \omega_{C_d})\longrightarrow \Omega_{W_d(C)}\otimes \omega_{W_d(C)}\stackrel{\kappa}{\longrightarrow} 
u_{d*}(u^*_d\Omega_{W_d(C)}\otimes \omega_{C_d})\longrightarrow$$ 
$$u_{d*}(M\otimes \omega_{C_d})\longrightarrow R^1 u_{d*}(u_d^*\Omega_{W_d(C)}\otimes \omega_{C_d})$$ such that $\kappa$ is an isomorphism outside 
the singular locus of $W_d(C)$, i.e. $W^1_d(C)$. This says that $R^{-1}u_{d*}(M\otimes \omega_{C_d})$ is supported on $W_d^1(C)$ and moreover, since 
$R^1u_{d*}(u_d^*\Omega_{W_d(C)}\otimes \omega_{C_d})$ is supported on $W_d^2(C)$ by Proposition \ref{R1a}, we find that $u_{d*}(M\otimes \omega_{C_d})$ is 
supported on $W_d^1(C)$ as well. Finally the statement in \eqref{R10} follows as $\iota_d$ is a closed immersion.

We now point out that, for all $j\geq 1$, there are isomorphisms 
$$R^j u_{d*}(M\otimes \omega_{C_d})\simeq R^j u_{d*}(u_d^*\Omega_{W_d(C)}\otimes \omega_{C_d})$$ deduced 
from diagram \eqref{diagram}. These isomorphisms, together with
Propositions \ref{R1a} and \ref{support}, yield
\begin{gather}\label{finally}
{\rm supp}\, R^1 f_{d*}(M\otimes \omega_{C_d})\subset W_d^2(C)\quad \mbox{ and }\quad 
{\rm supp}\, R^j f_{d*}(M\otimes \omega_{C_d})\subset W_d^j(C)\quad \mbox{ for all }\quad j\geq 2.
\end{gather}
  
Consider now the spectral sequence (\cite[p. 58]{Huy})
$$E_2^{p,q}={\rm Ext}_{\sO_{J(C)}}^p (R^{-q}f_{d*}(M\otimes \omega_{C_d}),\sO_{J(C)})\Rightarrow {\rm Ext}_{\sO_{J(C)}}^{p+q}(\rr f_{d*}(M\otimes \omega_{C_d}),\sO_{J(C)}).$$
We are interested in the vanishing of the terms on the lines $p+q=1+g-d$ and $p+q=2+g-d$, and therefore in the vanishing of the terms 
$E^{1+g-d-q,q}_2$ and $E^{2+g-d-q,q}_2$ for $q\leq 1$.
However this easily follows from the general fact that ${\rm Ext}^k_{\sO_{J(C)}}(\sF,\sO_{J(C)})=0$ 
for any coherent sheaf $\sF$ on $J(C)$ such that $\codim {\rm supp\,}\sF>k$, and from Martens' Theorem saying that $\dim W_d^j(C)\leq d-2j-1$ if 
$C$ is non-hyperelliptic (\cite[Theorem 5.1]{ACGH}). In fact, in this way, we obtain $E_{2}^{p,q}=0$ for couples $(p,q)$ such that either 
$p\leq g-d-2q$ and $q\leq -2$, or $p\leq 4+g-d$ and $q=-1$, or $p\leq 3+g-d$ and $q=-1,0$. 
\end{proof}

\begin{proof}[Proof of Proposition \ref{N}]
We consider the spectral sequence 
$$E_2^{p,q}=R^p u_{d*}(H^q(N\otimes \omega_{C_d}))\Rightarrow R^{p+q}u_{d*}(N\otimes \omega_{C_d})$$
in order to compute the supports of the sheaves $\sF^j:= R^j f_{d*}(N\otimes \omega_{C_d})$.
By noting that 
$$H^0(N\otimes \omega_{C_d})={\rm Ker}\,(\delta_1\otimes \omega_{C_d}), \quad H^{-1}(N\otimes 
\omega_{C_d})=\Omega_{C_d / W_d(C)}\otimes \omega_{C_d},\quad  
\mbox{and}\quad H^j(N\otimes \omega_{C_d})=0\quad  \mbox{else},$$   
by Propositions \ref{support} and \ref{relativediff} we find
\begin{gather*}
{\rm supp}\,\sF^{-1}\subset W^1_d(C), \quad {\rm supp}\,\sF^0\subset W_d^1(C),\quad  {\rm supp}\,\sF^1\subset W^2_d(C) \quad \mbox{ and } \\\notag
{\rm supp} \,\sF^j \subset W^j_d(C)\quad \mbox{ for all } \quad j\geq 2\notag.
\end{gather*} 
At this point to compute the groups ${\rm Ext}^{j+g-d}(\rr f_{d*}(N\otimes \omega_{C_d}),\sO_{J(C)})$ for $j=1,2$
we use the spectral sequence 
$$E_2^{p,q}={\rm Ext}_{\sO_{J(C)}}^p(\sF^{-q},\sO_{J(C)})\Rightarrow  {\rm Ext}_{\sO_{J(C)}}^{p+q}(\rr f_{d*}(N\otimes \omega_{C_d}),
\sO_{J(C)})$$ and we argue as in Proposition \ref{Ext3}.
\end{proof}

\section{Simultaneous deformations of $W_d(C)$ and $J(C)$}\label{secapp}
  
In this section we aim to prove Theorem \ref{intrdefiota}. We start by proving some general facts regarding the closed immersion
$\iota_d:W_d(C)\hookrightarrow J(C)$.
 \begin{prop}\label{albanese}
  Let $C$ be a smooth curve of genus $g\geq 2$ and let $1\leq d\leq g$ be an integer. 
  Then
  \begin{enumerate}
  \item[i).] $H^j(C_d,\sO_{C_d})\simeq \wedge ^j H^1(C,\sO_C)$ for all $j\leq d$.\\
  \item[ii).] $H^j(J(C),\sI_{W_d(C)})=0$ for all $j\leq d$.\\
  \item[iii).] ${\rm Ext}_{\sO_{J(C)}}^j(\Omega_{J(C)},\sO_{J(C)})\simeq 
  {\rm Ext}_{\sO_{J(C)}}^j(\Omega_{J(C)},\iota_{d*}\sO_{W_d(C)})$ for all $j\leq d$.
\end{enumerate}
  \end{prop}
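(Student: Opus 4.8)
The plan is to deduce all three parts from a single cohomological isomorphism for the Abel--Jacobi map, so I begin with (i), which sets up the target groups. First I would pass to the quotient presentation $\pi\colon C^d\to C_d$ by the symmetric group $\sigma_d$. Since we work in characteristic zero, $(\pi_*\sO_{C^d})^{\sigma_d}\simeq \sO_{C_d}$ and taking $\sigma_d$-invariants is exact, so $H^j(C_d,\sO_{C_d})\simeq H^j(C^d,\sO_{C^d})^{\sigma_d}$. The K\"unneth formula identifies $H^\bullet(C^d,\sO_{C^d})$ with the graded-commutative ring $\bigotimes_{i=1}^d\big(\rc\oplus H^1(C,\sO_C)\big)$, where the classes of $H^1(C,\sO_C)$ sit in odd degree (recall $H^{\geq 2}(C,\sO_C)=0$). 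A summand of total degree $j$ corresponds to choosing $j$ of the $d$ tensor factors to contribute an $H^1$-class; the $\sigma_d$-action permutes these factors and, as the classes are odd, introduces the Koszul sign. Hence for $j\leq d$ the invariants in degree $j$ are exactly $\wedge^j H^1(C,\sO_C)$, which is (i). I record for later that this computation also identifies the cup-product ring $H^\bullet(C_d,\sO_{C_d})$, in degrees $\leq d$, with the exterior algebra $\wedge^\bullet H^1(C,\sO_C)$.

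The crux will be the claim that for $j\leq d$ the pullback $f_d^*\colon H^j(J(C),\sO_{J(C)})\to H^j(C_d,\sO_{C_d})$ along the Abel--Jacobi map \eqref{fd} is an isomorphism. Note this is equivalent to saying the restriction $\iota_d^*\colon H^j(J(C),\sO_{J(C)})\to H^j(W_d(C),\sO_{W_d(C)})$ is an isomorphism, since \eqref{ratsing} gives $H^j(W_d(C),\sO_{W_d(C)})\simeq H^j(C_d,\sO_{C_d})$ through $u_d^*$. To prove the claim I would pull back further to $C^d$ along $\pi$, writing $a_d:=f_d\circ\pi=\mu\circ(f_1\times\cdots\times f_1)$ with $\mu\colon J(C)^d\to J(C)$ the sum map and $f_1$ the Abel--Jacobi embedding of $C$. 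On $H^1$ the sum map pulls a class back to the sum of its pullbacks under the projections, so $a_d^*$ in degree one is the isomorphism $f_1^*\colon H^1(J(C),\sO_{J(C)})\xrightarrow{\sim}H^1(C,\sO_C)$ (the Albanese property of $f_1$) followed by the diagonal embedding into $\bigoplus_i H^1(C,\sO_C)$; its image is precisely the $\sigma_d$-invariant line $\pi^*H^1(C_d,\sO_{C_d})$. Since $H^\bullet(J(C),\sO_{J(C)})=\wedge^\bullet H^1(J(C),\sO_{J(C)})$ and $a_d^*$ is a ring homomorphism, in degree $j$ it is the $j$-th exterior power of this degree-one map; products of diagonal classes sharing an index vanish because $H^1(C,\sO_C)\cup H^1(C,\sO_C)\subset H^2(C,\sO_C)=0$ on each factor, so $a_d^*$ carries $\wedge^j H^1(J(C),\sO_{J(C)})$ isomorphically onto the invariants in degree $j$ whenever $j\leq d$. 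As $\pi^*$ is injective with image the invariants, $f_d^*$ is an isomorphism for $j\leq d$.

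Granting this key step, part (ii) would follow by applying $H^\bullet(J(C),-)$ to the structure sequence $0\to\sI_{W_d(C)}\to\sO_{J(C)}\to\iota_{d*}\sO_{W_d(C)}\to 0$. Using $H^j(J(C),\iota_{d*}\sO_{W_d(C)})\simeq H^j(W_d(C),\sO_{W_d(C)})$, the outer maps of the long exact sequence are the restriction maps $\iota_d^*$, which are isomorphisms for all $j\leq d$; hence in that range $H^j(J(C),\sI_{W_d(C)})$ is squeezed between a surjection (in degree $j-1$) and an injection (in degree $j$) and therefore vanishes. For part (iii) I would use that $J(C)$ is an abelian variety, so $\Omega_{J(C)}\simeq\sO_{J(C)}^{\oplus g}$ is free and ${\rm Ext}^j_{\sO_{J(C)}}(\Omega_{J(C)},\sG)\simeq H^j(J(C),\sG)^{\oplus g}$ functorially in $\sG$. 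Taking $\sG=\sO_{J(C)}$ and $\sG=\iota_{d*}\sO_{W_d(C)}$, the natural map in (iii) becomes $(\iota_d^*)^{\oplus g}$, which is an isomorphism for $j\leq d$ by the key step.

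The hard part is the key step, and within it the passage from degree one to higher degrees: one must verify that the cup-product ring of $C_d$ agrees, in degrees $\leq d$, with the exterior algebra on $H^1$, and that $f_d^*$ is compatible with this multiplicative structure. Both points are handled by the $C^d$-computation above, the decisive input being $H^2(C,\sO_C)=0$, which is exactly what forces the diagonal products with a repeated index to drop out and makes the exterior-power identification work.
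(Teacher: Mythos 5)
Your argument is correct and follows essentially the same route as the paper: part (i) via the $\sigma_d$-invariants of the K\"unneth decomposition on $C^d$, the key isomorphism $f_d^*\colon H^j(J(C),\sO_{J(C)})\xrightarrow{\sim}H^j(C_d,\sO_{C_d})$ from the ring structure, the Albanese property in degree one and the vanishing of $H^2(C,\sO_C)$, and then (ii) from the long exact sequence of the ideal-sheaf sequence. The only cosmetic difference is in (iii), where you invoke the triviality of $\Omega_{J(C)}$ to reduce directly to the cohomology isomorphism, while the paper applies $\rr\mathrm{Hom}_{\sO_{J(C)}}(\Omega_{J(C)},-)$ to the ideal-sheaf sequence, uses (ii), and closes the top degree $j=d$ by a dimension count; both are valid.
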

  
\begin{proof}
The proof of $(i)$ can be found in \cite{Mac}. Nonetheless we present here a proof for the reader's ease. 
 Denote by $\pi_d:C^d\rightarrow C_d$ the quotient morphism realizing the symmetric product $C_d$ as quotient of the $d$-fold product
 $C^d$ under the action of the symmetric group $\sigma_d$.
 Therefore K\"{u}nneth's decomposition yields
 \begin{eqnarray*}
 H^*(C_d,\sO_{C_d})\simeq H^*\big(C^d,\big(\pi_{d*}\sO_{C^d}\big)^{\sigma_d}\big) & \simeq &
 H^*(C^d,\sO_{C^d})^{\sigma_d}\\
 & \simeq & \big(H^*(C,\sO_C)^{\otimes d}\big)^{\sigma_d}\\
 & \simeq & \bigoplus_{j=0}^d \big( {\rm Sym}^{d-j} H^0(C,\sO_C)\otimes \wedge^{j} H^1(C,\sO_C)\big)[-j]\\
 & \simeq & \bigoplus_{j=0}^d \wedge^j H^1(C,\sO_C)[-j].
 \end{eqnarray*}

 Now we turn to the proof of $(ii)$.
 By recalling the definition of $f_d = \iota_d \circ u_d :C_d\rightarrow J(C)$ in \eqref{fd}, we get isomorphisms 
 $$f_d^*H^{j}(J(C),\sO_{J(C)})\simeq 
 \wedge^j f_d^* H^1(J(C),\sO_{J(C)})\simeq \wedge^j H^1(C,\sO_{C})\simeq H^j(C_d,\sO_{C_d})$$
 thanks to the universal property of $J(C)$ and by $(i)$. 
 Moreover, since $\rr u_{d*}\sO_{C_d}\simeq \sO_{W_d(C)}$, we obtain isomorphisms 
 $$u_d^*H^j(W_d(C),\sO_{W_d(C)})\simeq H^j(C_d,\sO_{C_d})\simeq u_d^* \iota_d^* H^j(J(C),\sO_{J(C)}).$$ 
  These immediately yield $(ii)$ once one looks 
 at the long exact sequence in cohomology induced by the short exact sequence 
 \begin{equation}\label{definingWd}
 0\longrightarrow \sI_{W_d(C)}\longrightarrow \sO_{J(C)}\longrightarrow \iota_{d*}\sO_{W_d(C)}\longrightarrow 0. 
 \end{equation}

  Finally, for the last point it is enough to apply $\rr {\rm Hom}_{\sO_{J(C)}}(\Omega_{J(C)},-)$ 
  to the sequence \eqref{definingWd} and to use $(ii)$. 
 This yields the claimed isomorphisms for $j\leq d-1$ together with
 an injection ${\rm Ext}^d_{\sO_{J(C)}}(\Omega_{J(C)},\sO_{J(C)})\hookrightarrow {\rm Ext}_{\sO_{J(C)}}^d(\Omega_{J(C)},\iota_{d*}\sO_{W_d(C)})$ 
 which is an isomorphism 
for dimensional reasons.
\end{proof}

The deformations of Abel--Jacobi maps $f_d:C_d\rightarrow J(C)$ have been studied by Kempf. In particular, in \cite{Ke2}, Kempf shows that 
these deformations are all induced by those of
$C_d$ in case $C$ is non-hyperelliptic. However, in view of Theorem \ref{fantechi}, Kempf's result extends to all smooth curves 
of genus $g\geq 3$. In the following proposition we present a slightly different proof of this fact in the case $d\leq g$ 
by means of the theory of 
deformations of holomorphic maps developed by Namba in \cite{Na}. Moreover, we include a statement regarding the
deformations of the closed immersion $\iota_d:W_d(C)\hookrightarrow J(C)$. 
In combination with Theorem \ref{intrdefW_d}, this in particular proves Theorem \ref{intrdefiota} of the Introduction.

\begin{prop}\label{diffprop}
If $C$ is a smooth curve of genus $g\geq 2$, then for all $1\leq d \leq g$ 
the forgetful morphism ${\rm Def}_{f_d}\rightarrow {\rm Def}_{C_d}$ is an isomorphism. 
Moreover, if in addition $C$ is non-hyperelliptic, 
then the forgetful morphism ${\rm Def}_{\iota_d}\rightarrow {\rm Def}_{W_d(C)}$ is an isomorphism for all $1\leq d < g-1$.
\end{prop}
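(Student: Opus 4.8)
The plan is to treat both forgetful morphisms uniformly through Ran's exact sequence \eqref{exactseq}, reducing each to the single cohomological input of Proposition \ref{albanese}(iii), and then to upgrade the resulting isomorphisms of tangent spaces to isomorphisms of functors. First I would study $p_{C_d}:{\rm Def}_{f_d}\to{\rm Def}_{C_d}$ on tangent spaces by applying Proposition \ref{exactprop} to $f_d:C_d\to J(C)$ (whose hypothesis holds trivially since $J(C)$ is smooth). One computes $H^0(C_d,T_{C_d})\simeq H^0(C^d,T_{C^d})^{\sigma_d}=0$ and, because $f_d^*\Omega_{J(C)}\simeq\sO_{C_d}^{\oplus g}$, that ${\rm Hom}_{\sO_{C_d}}(f_d^*\Omega_{J(C)},\sO_{C_d})\simeq\rc^g\simeq H^0(J(C),T_{J(C)})=T^0_{J(C)}$. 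Since $f_d(C_d)$ generates $J(C)$, the map $f_d$ admits no infinitesimal automorphisms, so $\lambda_0$ is injective between spaces of equal dimension, hence bijective; therefore $\lambda_1=0$ and $T^1_{f_d}$ embeds as $\ker\lambda_2$ inside $T^1_{C_d}\oplus T^1_{J(C)}$.

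The crux is the identification of $\lambda_2^2$. Using $\rr f_{d*}\sO_{C_d}\simeq\iota_{d*}\sO_{W_d(C)}$ (which follows from \eqref{ratsing} and $f_d=\iota_d\circ u_d$) together with adjunction, the target ${\rm Ext}^1_{\sO_{C_d}}(\rl f_d^*\Omega_{J(C)},\sO_{C_d})$ of $\lambda_2$ becomes ${\rm Ext}^1_{\sO_{J(C)}}(\Omega_{J(C)},\iota_{d*}\sO_{W_d(C)})$, and $\lambda_2^2$ becomes the map induced by $\delta_0:\sO_{J(C)}\to\iota_{d*}\sO_{W_d(C)}$ --- exactly the arrow proved to be an isomorphism for the index $1\le d$ in Proposition \ref{albanese}(iii). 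As $\lambda_2^2$ is then bijective onto the whole target of $\lambda_2$, the projection $\ker\lambda_2\to T^1_{C_d}$, which is $\mathrm{d}p_{C_d}$, is an isomorphism. To pass to functors I would produce the inverse geometrically: the relative Albanese of a deformation $\mathcal C_d/A$ is an abelian scheme deforming $J(C)$, and its Albanese map deforms $f_d$, giving a section $s:{\rm Def}_{C_d}\to{\rm Def}_{f_d}$ of $p_{C_d}$. Then $\mathrm{d}s$ is an isomorphism; since ${\rm Def}_{C_d}$ is smooth and prorepresentable (Theorem \ref{fantechi}) and $T^0_{f_d}=0$ makes ${\rm Def}_{f_d}$ prorepresentable, the criterion \cite[Remark 2.3.8]{Se} applied to $s$ shows $s$, hence $p_{C_d}$, is an isomorphism.

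For the second statement I would run the identical Ran computation for the closed immersion $\iota_d:W_d(C)\hookrightarrow J(C)$, now using $H^0(W_d(C),T_{W_d(C)})\simeq H^0(C_d,T_{C_d})=0$ (valid since $u_d$ is small in the non-hyperelliptic range), the equality $\rr\iota_{d*}\sO_{W_d(C)}=\iota_{d*}\sO_{W_d(C)}$, and again Proposition \ref{albanese}(iii), to conclude that $\mathrm{d}p_{W_d}:T^1_{\iota_d}\to T^1_{W_d(C)}$ is an isomorphism. Finally I would feed this into the commutative square \eqref{commudiagrbdmor} of Proposition \ref{bdmor}, taken with $f_1=f_d$, $f_2=\iota_d$ and $f=u_d$: there $p_{C_d}$ is an isomorphism by the first part and $u_d'$ is an isomorphism by Theorem \ref{defWd}, so commutativity forces $\mathrm{d}F$ to be an isomorphism. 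As ${\rm Def}_{f_d}$ is now known to be smooth and prorepresentable, \cite[Remark 2.3.8]{Se} makes the blowing-down morphism $F$ an isomorphism, whence $p_{W_d}=(u_d'\circ p_{C_d})\circ F^{-1}$ is an isomorphism as well.

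The hard part will be twofold, and it lies away from the formal bookkeeping. First, verifying that $\lambda_2^2$ genuinely coincides with the arrow of Proposition \ref{albanese}(iii) demands careful tracking of the truncation $\bar\delta_0$ and of the Grothendieck--Verdier and adjunction identifications. Second, and more serious, is the functor-level upgrade: one must check that the relative Albanese yields a genuine two-sided inverse, equivalently that in an arbitrary deformation of $f_d$ the deformed target is forced to be the relative Albanese abelian scheme. This rigidity of maps to tori is precisely the content one extracts from Namba's deformation theory \cite{Na}, which can moreover be used to bypass the smoothness bootstrapping in the first step altogether.
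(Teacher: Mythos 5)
Your argument is correct in outline and, for the second statement, is essentially the paper's proof: the paper likewise shows ${\rm d}p_{W_d}$ is an isomorphism from Ran's sequence \eqref{exactseq} for $\iota_d$ via Proposition \ref{albanese}, deduces that the blowing-down morphism $F:{\rm Def}_{f_d}\to{\rm Def}_{\iota_d}$ of Proposition \ref{bdmor} has bijective differential from the commutative square, and then untwists $p_{W_d}$ exactly as you do. The genuine divergence is in the first statement, at the passage from tangent spaces to functors. The paper does not construct a section: it uses Namba's exact sequence for maps between \emph{smooth} varieties, which (unlike \eqref{exactseq} as stated, which stops at ${\rm Ext}^1(\rl f^*\Omega_Y,\sO_X)$) continues one step further to the obstruction spaces, so that Proposition \ref{albanese}(iii) in degree $2$ shows ${\rm Def}_{f_d}$ is less obstructed than ${\rm Def}_{C_d}$; combined with smoothness of ${\rm Def}_{C_d}$ and \cite[Proposition 2.3.6]{Se} this gives smoothness of ${\rm Def}_{f_d}$, and \cite[Remark 2.3.8]{Se} is applied directly to $p_{C_d}$. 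Your alternative --- a relative Albanese section $s$ with ${\rm d}s$ bijective, then \cite[Remark 2.3.8]{Se} applied to $s$ --- is logically sound (and note that once $s$ is an isomorphism, $p_{C_d}=s^{-1}$ automatically, so the ``two-sidedness'' you worry about at the end is not an extra issue), but it trades a purely formal obstruction comparison for the nontrivial geometric input that the relative Albanese scheme and morphism exist over Artinian bases; in practice one would obtain this from Theorem \ref{fantechi} by realizing the deformation as ${\rm Sym}^d$ of a deformed curve and taking its relative Jacobian, which is more machinery than the paper needs. Two small points to tighten: the identification of $\lambda_2^2$ with the map of Proposition \ref{albanese}(iii) is painless here because $\Omega_{J(C)}$ is trivial, so the ``careful tracking'' reduces to ${\rm Ext}^i(\Omega_{J(C)},\iota_{d*}\sO_{W_d(C)})\simeq H^i(J(C),\iota_{d*}\sO_{W_d(C)})^{\oplus g}$; and when you invoke \cite[Remark 2.3.8]{Se} for $F$ you should also record that ${\rm Def}_{\iota_d}$ is prorepresentable (the paper cites \cite[Proposition 13]{Ma}, using $T^0_{\iota_d}=0$), since the criterion requires prorepresentability of the target as well as of the smooth source.
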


\begin{proof}
We start with the proof that the forgetful morphism $p:{\rm Def}_{f_d}\rightarrow {\rm Def}_{C_d}$ is an isomorphism. 
First of all we show that the differential ${\rm d}p$ is an isomorphism.
Since the varieties $C_d$ and $J(C)$ are smooth, the tangent space $T_{f_d}^1$ and the obstruction space $T_{f_d}^2$ 
to ${\rm Def}_{f_d}$ fit in an exact sequence (\cite[\S3.6]{Na}):
\begin{gather*}
\ldots \longrightarrow H^0(C_d,T_{C_d})\oplus H^0(J(C),T_{J(C)})\stackrel{\lambda_0}{\longrightarrow} 
H^0(C_d, f_d^*T_{J(C)})\longrightarrow T^1_{f_d}\\ 
\longrightarrow 
H^1(C_d,T_{C_d})\oplus H^1(J(C),T_{J(C)})\stackrel{\lambda_1}{\longrightarrow} H^1(C_d, f_d^*T_{J(C)})\longrightarrow T^2_{f_d}\\
\longrightarrow 
H^2(C_d,T_{C_d})\oplus H^2(J(C),T_{J(C)})\stackrel{\lambda_2}{\longrightarrow} H^2(C_d, f_d^*T_{J(C)})\longrightarrow \ldots
\end{gather*}
Moreover, the map $T^1_{f_d}\rightarrow H^1(C_d,T_{C_d})$ is identified to the differential ${\rm d}p$ and 
$T^2_{f_d}\rightarrow H^2(C_d,T_{C_d})$ is an obstruction map for ${\rm Def}_{f_d}$.

By the isomorphism $\rr f_{d*}\sO_{C_d}\simeq \iota_{d*}\sO_{W_d(C)}$ and Proposition \ref{albanese}, we see that the morphisms 
$\lambda_0$, $\lambda_1$ and $\lambda_2$ 
induce isomorphisms $$H^i(J(C),T_{J(C)})\simeq  H^i(C_d, f_d^*T_{J(C)})\quad  \mbox{for}\quad i=0,1,2.$$
Hence the differential ${\rm d}p:T^1_{f_d}\rightarrow T^1_{C_d}$ is an isomorphism and 
${\rm Def}_{f_d}$ is less obstructed than ${\rm Def}_{C_d}$. Moreover, by \cite[Proposition 2.3.6]{Se}, we conclude that 
${\rm Def}_{f_d}$ is unobstructed since ${\rm Def}_{C_d}$ is so.
Finally, according to the criterion \cite[Remark 2.3.8]{Se}, $p$ is an isomorphism as soon as  
both ${\rm Def}_{f_d}$ and ${\rm Def}_{C_d}$ are prorepresentable. 
But these facts follow from the general criterion of prorepresentability
\cite[Proposition 13]{Ma} and the vanishing $H^0(C_d,T_{C_d})=0$.

We now prove the second statement and suppose that $C$ is non-hyperelliptic.
By Proposition \ref{bdmor} there is a blowing-down morphism $t:{\rm Def}_{f_d}\rightarrow {\rm Def}_{\iota_d}$ 
which fits into a commutative diagram

\begin{equation*}
\centerline{ \xymatrix@=32pt{
 {\rm Def}_{f_d} \ar[r]^{p}\ar[d]^{t} & {\rm Def}_{C_d}\ar[d]^{u_d'}   \\
 {\rm Def}_{\iota_d} \ar[r]^{p'} & {\rm Def}_{W_d(C)}  }} 
\end{equation*}

\noindent where $p'$ is the forgetful morphism.
We notice that the differential ${\rm d}p'$ is nothing else than the morphism $T^1_{\iota_d}\rightarrow T^1_{W_d}$ of 
the exact sequence \eqref{exactseq} associated to $f=\iota_d$ (\emph{cf}. also Remark \ref{bdtanget}). 
Therefore, since by Proposition \ref{albanese} the second components of $\lambda_0$ and $\lambda_2$ of the same sequence are isomorphisms, 
we get that 
${\rm d}p'$ is an isomorphism. Therefore ${\rm d}t$ is an isomorphism too since we have already shown that 
both ${\rm d}p$ and ${\rm d}u_d'$ are isomorphisms (Theorem \ref{defWd}). Moreover,
${\rm Def}_{\iota_d}$ is prorepresentable by \cite[Proposition 13]{Ma}. 
Therefore, since ${\rm Def}_{f_d}$ is unobstructed, we have that $t$ is an isomorphism
(\cite[Remark 2.3.8]{Se}). Finally, as both $p$ and $\widetilde{u}_d$ are isomorphisms, this yields that $p'$ is an isomorphism as well.
\end{proof}

\section{Deformations of Fano surfaces of lines}\label{secfano}

In this section we prove some deformation-theoretic statements regarding the Fano surface of lines.
Let $Y\subset \rp^4$ be a smooth cubic hypersurface and let $F(Y)$ be the Fano scheme parameterizing lines on $Y$. Then $F(Y)$ is a smooth 
surface which embeds in the intermediate Jacobian $J(Y)$.
We denote by $\iota:F(Y)\hookrightarrow J(Y)$ this embedding and we recall that 
 the tangent space to ${\rm Def}_{F(Y)}$ has dimension $10$ while
the obstruction space dimension $40$. 
\begin{prop}
 The surface $F(Y)$ is co-stable in $J(Y)$, \emph{i.e.} the forgetful morphism 
 $p_{F(Y)}:{\rm Def}_{\iota}\rightarrow {\rm Def}_{F(Y)}$ is smooth. Moreover, ${\rm d}p_{F(Y)}$ is an isomorphism and ${\rm Def}_{\iota}$ is 
 less obstructed than ${\rm Def}_{F(Y)}$.
 \end{prop}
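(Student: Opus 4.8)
The plan is to mimic the proof of Proposition \ref{diffprop}, replacing the Abel--Jacobi picture by the Albanese embedding $\iota:F(Y)\hookrightarrow J(Y)$ of the Fano surface into the $5$-dimensional intermediate Jacobian. Since both $F(Y)$ and $J(Y)$ are smooth, Namba's exact sequence \cite[\S3.6]{Na} attached to $\iota$, exactly as recalled in the proof of Proposition \ref{diffprop}, interlaces the groups $H^i(F(Y),T_{F(Y)})\oplus H^i(J(Y),T_{J(Y)})$, $H^i(F(Y),\iota^*T_{J(Y)})$ and $T^{i+1}_\iota$ via maps
\begin{gather*}
H^i(F(Y),T_{F(Y)})\oplus H^i(J(Y),T_{J(Y)})\stackrel{\lambda_i}{\longrightarrow} H^i(F(Y),\iota^*T_{J(Y)})\longrightarrow T^{i+1}_\iota\longrightarrow H^{i+1}(F(Y),T_{F(Y)})\oplus H^{i+1}(J(Y),T_{J(Y)}).
\end{gather*}
In it, the first component of $T^1_\iota\to H^1(F(Y),T_{F(Y)})\oplus H^1(J(Y),T_{J(Y)})$ is the differential ${\rm d}p_{F(Y)}$, while the first component of $T^2_\iota\to H^2(F(Y),T_{F(Y)})\oplus H^2(J(Y),T_{J(Y)})$ is an obstruction map for ${\rm Def}_\iota$ compatible with the obstruction theory of ${\rm Def}_{F(Y)}$. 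As in Proposition \ref{diffprop}, the whole argument reduces to controlling the second component $\lambda_i^{J}:H^i(J(Y),T_{J(Y)})\to H^i(F(Y),\iota^*T_{J(Y)})$ of $\lambda_i$.

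The key input is that the pullback $\iota^*:H^i(J(Y),\sO_{J(Y)})\to H^i(F(Y),\sO_{F(Y)})$ is an isomorphism for $i=0,1,2$. For $i=0$ this is trivial, and for $i=1$ it is the defining property of the Albanese map $\iota$. For $i=2$ it follows from the isomorphism $H^2(J(Y),\sO_{J(Y)})\simeq\wedge^2 H^1(J(Y),\sO_{J(Y)})$ together with the classical fact (Clemens--Griffiths) that for the Fano surface the cup product $\wedge^2 H^1(F(Y),\sO_{F(Y)})\to H^2(F(Y),\sO_{F(Y)})$ is an isomorphism; by naturality of the cup product, $\iota^*$ on $H^2$ is then the composite of this map with $\wedge^2$ of the degree-one isomorphism. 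Since $J(Y)$ is an abelian variety we have $T_{J(Y)}\simeq\sO_{J(Y)}^{\oplus 5}$ and $\iota^*T_{J(Y)}\simeq\sO_{F(Y)}^{\oplus 5}$, so this upgrades at once to the assertion that $\lambda_i^{J}$ is an isomorphism for $i=0,1,2$.

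Granting this, the conclusion is a diagram chase. Because $\lambda_i^{J}$ is surjective, each $\lambda_i$ is surjective for $i=0,1,2$; hence the connecting maps $H^i(F(Y),\iota^*T_{J(Y)})\to T^{i+1}_\iota$ vanish, yielding identifications $T^1_\iota\simeq\ker\lambda_1$ and $T^2_\iota\simeq\ker\lambda_2$ inside the direct sums $H^i(F(Y),T_{F(Y)})\oplus H^i(J(Y),T_{J(Y)})$. Because $\lambda_i^{J}$ is moreover injective, any element of $\ker\lambda_i$ projecting to $0$ in $H^i(F(Y),T_{F(Y)})$ must vanish, so the projection $\ker\lambda_i\to H^i(F(Y),T_{F(Y)})$ is injective; for $i=1$ it is also surjective since $\lambda_1$ is onto. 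This shows simultaneously that ${\rm d}p_{F(Y)}:T^1_\iota\to H^1(F(Y),T_{F(Y)})$ is an isomorphism and that the obstruction map $T^2_\iota\to H^2(F(Y),T_{F(Y)})$ is injective, i.e. ${\rm Def}_\iota$ is \emph{less obstructed} than ${\rm Def}_{F(Y)}$; smoothness of $p_{F(Y)}$ then follows from \cite[Proposition 2.3.6]{Se}. A bookkeeping of dimensions (using $q(F(Y))=5$, $p_g(F(Y))=10$, and the given $\dim T^1_{F(Y)}=10$, $\dim T^2_{F(Y)}=40$) gives $\dim T^1_\iota=10$ and $\dim T^2_\iota=40$, which serves as a consistency check and in fact shows the injective obstruction map to be an isomorphism.

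The step I expect to be the main obstacle is the case $i=2$ of the isomorphism $\iota^*:H^i(J(Y),\sO_{J(Y)})\to H^i(F(Y),\sO_{F(Y)})$: this is the only genuinely geometric input, resting on the special feature of the Fano surface that $\wedge^2 H^1(\sO)\to H^2(\sO)$ is bijective, whereas everything downstream is a formal manipulation of Namba's sequence, identical in spirit to the use of Proposition \ref{albanese} in Proposition \ref{diffprop}.
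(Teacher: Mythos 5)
Your argument is correct and follows essentially the same route as the paper: the key input in both is that $\iota^*\colon H^i(J(Y),\sO_{J(Y)})\to H^i(F(Y),\sO_{F(Y)})$ is an isomorphism for $i=0,1,2$ (via $J(Y)\simeq{\rm Alb}(F(Y))$ and the Clemens--Griffiths identification $H^2(F(Y),\sO_{F(Y)})\simeq\wedge^2H^1(F(Y),\sO_{F(Y)})$), which, since $T_{J(Y)}$ is trivial, controls the second components of the maps $\lambda_i$ in the Ran--Namba sequence exactly as in Proposition \ref{diffprop}. The only cosmetic difference is that the paper packages this as the vanishing of $H^i(J(Y),\sI_{F(Y)}\otimes T_{J(Y)})$ for $i=0,1,2$ and invokes \cite[Proposition 3.4.23]{Se} for co-stability, whereas you run the equivalent diagram chase explicitly.
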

 \begin{proof}
 By \cite[Theorem 11.19]{CG} (\emph{cf}. \cite[Theorem 4.1]{LT} for a different proof)
 there is an isomorphism $J(Y)\simeq {\rm Alb}(F(Y))$ between the intermediate Jacobian $J(Y)$ and the Albanese variety of $F(Y)$
 from which we get an isomorphism $H^1\big(F(Y),\sO_{F(Y)})\simeq H^1(J(Y),\sO_{J(Y)}\big)$. Moreover,
 by \cite[(12.1)]{CG} we deduce a further isomorphism 
 \begin{eqnarray*}
  H^2(F(Y),\sO_{F(Y)}) & \simeq & H^1(F(Y),\sO_{F(Y)})\wedge H^1(F(Y),\sO_{F(Y)})\\
 & \simeq & H^1(J(Y),\sO_{J(Y)})\wedge H^1(J(Y),\sO_{J(Y)}) \\ 
 & \simeq & H^2(J(Y),\sO_{J(Y)}).
 \end{eqnarray*}
 
 Therefore, 
 by looking at the long exact sequence in cohomology induced by $0\rightarrow \sI_{F(Y)}\rightarrow \sO_{J(Y)}\rightarrow \sO_{F(Y)}\rightarrow 0$, 
 we deduce that $$H^i\big(J(Y),\sI_{F(Y)}\otimes T_{J(Y)}\big) = 0\quad  \mbox{ for }\quad i=0,1,2.$$
But the vanishing of these groups for $i=2$ is a sufficient condition for co-stability (\cite[Proposition 3.4.23]{Se}). 
On the other hand, the vanishing of all of them allow us to reason as in Proposition \ref{diffprop} to obtain the other statements.
  \end{proof} 

\addcontentsline{toc}{chapter}{Bibliografia}

\end{document}